\newtheorem{thm}{Th\'eor\`eme}[section]
\newtheorem{remark}[thm]{Remarque}
\newtheorem{lem}[thm]{Lemme}
\newtheorem{proposition}[thm]{Proposition}
\newtheorem{definition}[thm]{D\'efinition}
\newtheorem{ex}[thm]{Exemple}
\theoremstyle{definition}
\numberwithin{equation}{section}
\newcommand{\Z}{\mathbb Z}
\newcommand{\n}{\noindent}
\newcommand{\cal}{\mathcal}
\newcommand{\dis}{\displaystyle}
\newcommand{\resumename}{Abstract}
\newenvironment{resume}{\narrower\footnotesize\bf
\noindent\resumename.\quad\footnotesize\rm}{\par\bigskip}
\font\hb=cmbx12
\newcommand\pt{\hbox{\hb .}}
\begin{document}

\title[Les pré-$(a,b)$-algèbres]{ Les pré-$(a,b)$-algèbres à homotopie près}

\author[Walid Aloulou]{Walid Aloulou}
\date{23/06/2012}

\begin{abstract}

Dans cet article on \'etudie le concept d'alg\`ebre \`a homotopie
pr\`es pour une structure d\'efinie par deux op\'erations
$\curlywedge$ et $\lozenge$. Des exemples importants d'une telle
structure sont ceux des alg\`ebres pré-Gerstenhaber et
pré-Poisson. Etant donn\'ee une structure d'alg\`ebre
pré-commutative et pré-Lie gradu\'ee pour deux d\'ecalages des
degr\'es donn\'es par $a$ et $b$, on définit la structure d'une
pré-$(a,b)$-algèbre et on donne une construction explicite de
l'alg\`ebre \`a homotopie pr\`es associ\'ee.

\end{abstract}


\address{Universit\'e de Sousse, Laboratoire de Mathématique Physique Fonctions Spéciales et
Applications. Université de Sfax, D\'epartement de
Math\'ematiques, Institut Pr\'eparatoire aux Etudes d'Ing\'enieurs
de Sfax, Route Menzel Chaker Km 0.5, Sfax, 3018, Tunisie}
\email{Walid.Aloulou@ipeim.rnu.tn}

\keywords{Alg\`ebre \`a homotopie pr\`es, cog\`ebres, alg\`ebres
diff\'erentielles gradu\'ees} \subjclass[2000]{16A03, 16W30,
16E45}

\thanks{}

\maketitle


\begin{resume}

We study in this article the concept of algebra up to homotopy for
a structure defined by two operations $ \curlywedge $ and $
\lozenge $. Important examples of such structure are those of
pre-Gerstenhaber and pre-Poisson algebras.

Given a structure of pre-commutative and pre-Lie algebra for two
shifts of degree given by $a$ and  $b$, we define the structure of
a pre-$(a, b)$-algebra and we give an explicit construction of the
associated algebra up to homotopy

\end{resume}

\

\section{Introduction}\label{sec1}

Soit $\mathcal A$ une alg\`ebre munie d'une op\'eration $m$ ($m$
est associative, ou commutative, ou Lie...). On dira juste que
$\mathcal A$ est une $\mathcal{P}$-algèbre ou une alg\`ebre
d'opérade $\mathcal P$ ($\mathcal{P}$ est $Ass$ ou $Com$ ou
$Lie$...). Dans beaucoup de cas, on sait d\'efinir la notion
d'alg\`ebre d'opérade $\mathcal P$ \`a homotopie pr\`es de
$\mathcal A$. Pr\'ecis\'ement, si $\mathcal{A}$ est une $\mathcal
P$-algèbre, on lui associe canoniquement une cog\`ebre gradu\'ee
$\big(\mathcal{C}(\mathcal{A}),\Delta\big)$. Une structure de
$\mathcal{P}$-alg\`ebre \`a homotopie pr\`es sur $\mathcal{A}$ est
\'equivalente \`a la donn\'ee d'une cod\'erivation
$Q:\big(\mathcal{C}(\mathcal{A}),\Delta\big)
\longrightarrow\big(\mathcal{C}(\mathcal{A}),\Delta\big)$ de
degr\'e $1$ et de carr\'e nul (c'est \`a dire que $Q$ est une
codiff\'erentielle). La cog\`ebre codiff\'erentielle
$\big(\mathcal{C}(\mathcal{A}),\Delta,Q\big)$ est appel\'ee la
$\mathcal{P}$-alg\`ebre \`a homotopie pr\`es de $\mathcal{A}$.
Cette alg\`ebre donne naturellement les complexes d'homologie et
de cohomologie associ\'es \`a ce type d'alg\`ebre pour $\mathcal
A$ et ses modules (voir \cite{[AAC]}, \cite{[C]}). Par exemple, si
$\mathcal{A}$ est une $Lie$-algèbre, alors on sait construire
l'algèbre de Lie à homotopie près associée et retrouver l'homologie et
 la cohomologie de Chevalley-Eilenberg (des algèbres de Lie).\\

Lorsque $\mathcal A$ poss\`ede deux op\'erations avec des
relations de compatibilit\'e, la construction de l'alg\`ebre \`a
homotopie pr\`es enveloppante correpondante est plus difficile. On
peut citer dans ce cadre les $(a,b)$-algèbres (voir \cite{[A]}).
En particulier une $(0,0)$-algèbre est une algèbre de Poisson
graduée et une $(0,-1)$-algèbre est une algèbre de Gerstenhaber.\\

Un autre exemple d'algèbre à deux opérations est l'algèbre
pré-Gerstenhaber à droite $(\mathcal{G},\curlywedge,\lozenge)$
définie dans \cite{[AAC2]} par :
\begin{itemize}
\item $(\mathcal{G},\curlywedge)$ est une algèbre de Zinbiel \`a droite graduée, $|\curlywedge|=0$.

\item $(\mathcal{G}[1],\lozenge)$ est une algèbre pré-Lie \`a droite gradu\'ee, $|\lozenge|=-1$.
\item  Les relations de compatibilité entre $\curlywedge$ et $\lozenge$ sont :
$$\aligned
&\alpha\curlywedge(\beta\lozenge \gamma)=(-1)^{(|\beta|-1)(|\gamma|-1)}\alpha\curlywedge(\gamma\lozenge \beta)\\
&\alpha\lozenge(\beta\curlywedge \gamma)=(\alpha\lozenge \beta)\curlywedge\gamma\\
&(\alpha\lozenge\beta)\curlywedge
\gamma=(-1)^{(|\beta|-1)|\gamma|}(\alpha\curlywedge
\gamma)\lozenge\beta.
\endaligned
$$
\end{itemize}

Rappelons qu'une alg\`ebre pr\'e-Lie est un espace vectoriel $V$
muni d'une loi $\lozenge$ telle que son antisym\'etris\'ee est une
loi d'alg\`ebre de Lie. Il existe donc une notion d'alg\`ebre
pr\'e-Lie \`a homotopie pr\`es (\cite{[ChL]}). De m\^eme, une
alg\`ebre pr\'e-commutative, appel\'ee aussi alg\`ebre de Zinbiel
est \'equipp\'ee d'un produit $\curlywedge$, dont le sym\'etris\'e
est associatif et commutatif. Il existe donc une notion d'alg\`ebre de Zinbiel \`a homotopie pr\`es (\cite{[Liv]}).\\

Remarquons que si $(\mathcal{G},\curlywedge,\lozenge)$ est une
algèbre pré-Gerstenhaber, alors si on symétrise $\curlywedge$ et
on antisymétrise $\lozenge$, on obtient une algèbre de
Gerstenhaber.\\

Le pr\'esent travail consiste \`a unifier les constructions
d'alg\`ebre \`a homotopie pr\`es dans les cas des alg\`ebres
pré-Poisson et des alg\`ebres pré-Gerstenhaber, ce qui nous permet
de d\'efinir la structure d'une pré-$(a,b)$-alg\`ebre \`a
homotopie pr\`es. Disons qu'une pré-$(a,b)$-alg\`ebre est un
espace vectoriel gradu\'e $\mathcal{A}$ muni de deux produits
 $\curlywedge$ de degr\'e $a\in\mathbb{Z}$ $(|\curlywedge|=a)$ et $\lozenge$ de degr\'e $b\in\mathbb{Z}$
  $(|\lozenge|=b)$ tel que
$\big(\mathcal{A}[-a],\curlywedge\big)$ est une alg\`ebre de
Zinbiel gradu\'ee et $\big(\mathcal{A}[-b],\lozenge\big)$ est une
alg\`ebre pré-Lie gradu\'ee. Ces deux produits v\'erifient des
relations de compatibilit\'e entre eux donn\'ees par:
$$\aligned
&\alpha\curlywedge(\beta\lozenge \gamma)=(-1)^{(|\beta|+b)(|\gamma|+b)}\alpha\curlywedge(\gamma\lozenge \beta)\\
&\alpha\lozenge(\beta\curlywedge \gamma)=(\alpha\lozenge \beta)\curlywedge\gamma\\
&(\alpha\lozenge\beta)\curlywedge
\gamma=(-1)^{(|\beta|+b)(|\gamma|+a)}(\alpha\curlywedge
\gamma)\lozenge\beta.
\endaligned
$$

Si on pose
$[\alpha,\beta]=\alpha\lozenge\beta-(-1)^{(|\alpha|+b)(|\beta|+b)}\beta\lozenge\alpha$
et
$\alpha\pt\beta=\alpha\curlywedge\beta+(-1)^{(|\alpha|+a)(|\beta|+a)}\beta\curlywedge\alpha$,
on obtient que $(\mathcal A,\pt,[~,~])$ est une $(a,b)$-alg\`ebre.

Dans le cas o\`u $a=0$ et $b=-1$, on retrouve les alg\`ebres
pré-Gerstenhaber et dans le cas o\`u $a=b=0$, on trouve une
alg\`ebre qu'il est naturel d'appeler algèbre pré-Poisson gradu\'ees.\\


\section{G\'en\'eralit\'es}

\

Soit $V=\bigoplus_{n\in\Z}V_n$ un espace vectoriel $\mathbb{Z}$ gradué. Le degr\'e d'un
élément homogène $x$ dans $V$ est noté $|x|$. On notera $T^+(V)$ l'espace $\bigoplus_{n>0}\bigotimes^n V$, gradu\'e par $|x_1\otimes\dots\otimes x_n|=|x_1|+\dots+|x_n|$.\\

\begin{definition}

\

\noindent 1) Une algèbre graduée est un espace vectoriel gradué
$V$ muni d'une application bilinéaire $ m: V \otimes V \rightarrow
V$ de degré 0 ($|m|=0$) c'est \`a dire :
$$m(V_i\otimes V_j)\subset V_{i+j}.$$

\noindent 2) Une dérivation $d$ de l'algèbre $(V, m)$ est une
application vérifiant:
$$d \circ m = m\circ(d \otimes id + id \otimes d).$$ Si $d$ est de degré $1$ et
$d^2=0$, on dit que $d$ est une différentielle de $(V,m)$.\\
\end{definition}

\begin{definition}

\

\noindent 1) Une cogèbre graduée est un espace vectoriel gradué
$\cal C$ muni d'une application linéaire $ \Delta: \cal
C\longrightarrow \cal C \otimes \cal C$ dite comultiplication de
$\cal C$ vérifiant:
$$\Delta {\cal C}_k \subset \dis\sum_{i+j =k} {\cal C}_i\otimes {\cal C}_j$$

\noindent 2) Une codérivation $Q$ de la cogèbre $(\mathcal{C},
\Delta)$ est une application vérifiant:
$$
\Delta \circ Q = (Q \otimes id + id \otimes Q)\circ \Delta.
$$
Si $Q$ est de degré $1$ et $Q^2=0$, on dit que $Q$ est une
codifférentielle de $(\mathcal{C},\Delta)$. Dans ce cas, on dit
que $(\mathcal C,\Delta,Q)$ est une cog\`ebre codiff\'erentielle.\\
\end{definition}

Par définition, l'espace $V[1]$ est le m\^eme espace que $V$, mais
avec un d\'ecalage du degr\'e : le degré d'un élément homogène $x$
dans $V[1]$ noté $deg(x)$ devient $deg(x)=|x|-1$.\\

Les bonnes structures algèbriques sont des lois associées à une
opérade quadratique $\mathcal P$ (\cite{[GK]}). En effet, si $V$
est un espace vectoriel gradué, on peut dans ce cas construire la
cogèbre colibre $(W,\Delta)$ sur l'opérade duale $\mathcal P^!$
engendré par le décalé $V[1]$ de $V$.

Dans cette situation, dire qu'une loi $m:V\otimes V\rightarrow V$ de degré 0 est une structure de type $\mathcal P$,
c'est dire que sa décalée $m':V[1]\otimes V[1]\rightarrow V[1]$ est bilin\'eaire, de degré 1 et
 vérifie les symétries associées à l'opérade $\mathcal P^!$, donc est prolongeable de façon unique en
  une codérivation $Q$ de $(W,\Delta)$ et la loi $m$ vérifie les axiomes de la structure si et seulement si,
  $Q$ vérifie l'équation de structure $[Q,Q]=2Q^2=0$.\\

Toujours dans ce cas, on parlera de $\mathcal P_\infty$ alg\`ebre ou d'alg\`ebre \`a homotopie pr\`es pour
toute cog\`ebre codiff\'erentielle $(W,\Delta,Q)$ correspondante \`a $(W,\Delta)$.\\

\begin{definition}{\rm \cite{[GK]}}

\

Une structure de $\mathcal{P}_\infty$ algèbre sur un espace vectoriel $V$ est définie
par la donnée d'une codifférentielle $Q$ sur la $\mathcal{P}^!$-cogèbre $(W,\Delta)$ construite \`a partir de $V$.\\
\end{definition}

En particulier, si $(V,m,d)$ est une alg\`ebre diff\'erentielle, on peut prolonger $d+m'$ en une unique cod\'erivation
$Q$ de degr\'e 1 de $(W,\Delta)$ telle que $Q^2=0$, c'est \`a dire $(W,\Delta,Q)$ est une $\mathcal P_\infty$ alg\`ebre.
 D\'ecrivons quelques exemples.\\



\section{Les $(a,b)$-alg\`ebres \`a homotopie pr\`es}\label{sec4}

Cette section consiste \`a unifier les constructions d'alg\`ebre
\`a homotopie pr\`es dans les cas des alg\`ebres de Poisson et des
alg\`ebres de Gerstenhaber, ce qui nous permet de d\'efinir la
structure d'une $(a,b)$-alg\`ebre \`a homotopie pr\`es.

\subsection{Définitions et notations}

\begin{definition}

\

Consid\'erons un espace vectoriel $\mathcal{A}$ gradu\'e. Le
degr\'e d'un \'el\'ement homog\`ene $\alpha$ de $\mathcal{A}$ est
not\'e $|\alpha|$. Soient $a,b\in \mathbb{Z}$, l'espace
$\mathcal{A}$ est muni d'un produit $\pt$ de degr\'e $a$
($|\pt|=a$) et d'un crochet $[~,~]$ de degr\'e $b$ ($|[~,~]|=b$)
tel que $\Big(\mathcal{A}[-a],\pt\Big)$ est une alg\`ebre
commutative et associative gradu\'ee  et
$\Big(\mathcal{A}[-b],[~,~]\Big)$ est une alg\`ebre de Lie
gradu\'ee. De plus, l'application lin\'eaire
$ad:\mathcal{A}[-b]\longrightarrow\mathcal{D}er\Big(\mathcal{A}[-a],
\pt\Big)$; $\alpha\longmapsto ad_{\alpha}$
 est telle que $ad_{\alpha}$ soit une d\'erivation gradu\'ee pour le produit
 $\pt$.

 On dit que $\Big(\mathcal{A},\pt,[~,~]\Big)$ est une
 $(a,b)$-alg\`ebre gradu\'ee. Pour tout $\alpha,\beta,\gamma\in\mathcal{A}$, on a les propri\'et\'es
 suivantes:\\

 {\bf(i)} $
 \alpha\pt\beta=(-1)^{(|\alpha|+a)(|\beta|+a)}\beta\pt\alpha$,\\

{\bf(ii)}
$\alpha\pt(\beta\pt\gamma)=(\alpha\pt\beta)\pt\gamma$,\\

{\bf(iii)}
$[\alpha,\beta]=-(-1)^{(|\alpha|+b)(|\beta|+b)}[\beta,\alpha]$,\\

{\bf(iv)}
$(-1)^{(|\alpha|+b)(|\gamma|+b)}[[\alpha,\beta],\gamma]+(-1)^{(|\beta|+b)(|\alpha|+b)}\big[[\beta,\gamma]
,\alpha\big]$

\hskip5.07cm$+(-1)^{(|\gamma|+b)(|\beta|+b)}\big[[\gamma,\alpha],\beta\big]=0$,\\

{\bf (v)} $[\alpha,\beta\pt\gamma]=[\alpha,\beta]\pt
\gamma+(-1)^{(|\beta|+a)(|\alpha|+b)}\beta\pt[\alpha,\gamma] $\\

qui s'\'ecrit encore $[\alpha\pt\beta,\gamma]=\alpha\pt[\beta,
\gamma]+(-1)^{(|\beta|+a)(|\gamma|+b)}[\alpha,\gamma]\pt\beta $.\\

\noindent De plus, si on a une diff\'erentielle
$d:\mathcal{A}[-a]\longrightarrow\mathcal{A}[-a+1]$ \Big(ou
$d:\mathcal{A}[-b]\longrightarrow\mathcal{A}[-b+1]$\Big) de
degr\'e $1$ v\'erifiant $d\circ d=0,$
$$d(\alpha\pt
\beta)=d\alpha\pt\beta+(-1)^{|\alpha|+a}\alpha\pt d\beta \
\hbox{et} \
d([\alpha,\beta])=[d\alpha,\beta]+(-1)^{|\alpha|+b}[\alpha,d\beta]
,$$ on dira que $\Big(\mathcal{A},\pt,[~,~],d\Big)$ est une
 $(a,b)$-alg\`ebre diff\'erentielle gradu\'ee.\end{definition}

On utilise un d\'ecalage pour homog\`en\'eiser le produit et la
diff\'erentielle.
 On consid\`ere l'espace $\mathcal{A}[-a+1]$ muni de la graduation
 $dg(\alpha)=|\alpha|+a-1$ que l'on note simplement par $\alpha$. Sur $\mathcal{A}[-a+1]$, le produit
 $\pt$ n'est plus commutatif et le crochet $[~,~]$ n'est plus
 antisym\'etrique. On construit, donc, un nouveau produit $\mu$ sur $\mathcal{A}[-a+1]=\mathcal{A}[-a][1]$ de degr\'e $1$ d\'efini
 par
 $$\mu(\alpha,\beta)=(-1)^{1.\alpha}\alpha\pt\beta$$
et un nouveau crochet $\ell$ sur
$\mathcal{A}[-a+1]=\mathcal{A}[-b][b-a+1]$ de degr\'e $b-a+1$
d\'efini par
$$\ell(\alpha,\beta)=(-1)^{(b-a+1).\alpha}[\alpha,\beta].$$ Et on a

\noindent~{\bf(i)} $\mu(\alpha,\beta)=-(-1)^{\alpha
\beta}\mu(\beta,\alpha)$,\\

\noindent~{\bf(ii)}
$\mu\left(\mu(\alpha,\beta),\gamma\right)=-(-1)^{\alpha}\mu\left(\alpha,\mu(\beta,\gamma)\right)$,\\

\noindent~{\bf(iii)} $\ell(\alpha,\beta)=-(-1)^{b-a+1}(-1)^{\alpha
\beta}\ell(\beta,\alpha)$,\\

\noindent~{\bf(iv)} $(-1)^{\alpha
\gamma}\ell(\ell(\alpha,\beta),\gamma\big)+(-1)^{\beta
\alpha}\ell(\ell(\beta,\gamma),\alpha) +(-1)^{\gamma
\beta}\ell(\ell(\gamma,\alpha),\beta)=0$,\\

 \noindent~{\bf(v)} \
$\ell(\alpha,\mu(\beta,\gamma))=
(-1)^{\alpha+b-a+1}\mu(\ell(\alpha,\beta),\gamma)+
(-1)^{(\alpha+b-a+1)(\beta+1)} \mu(\beta,\ell(\alpha,\gamma)),$\\

ou encore
$$\ell(\alpha,\mu(\beta,\gamma))=(-1)^{(b-a+1)(\alpha+1)}\mu(\alpha,\ell(\beta,
\gamma))+(-1)^{b-a+1+\beta \gamma}\mu(\ell(\alpha,\gamma),\beta).
$$

\noindent De plus, $d$ reste encore une d\'erivation pour $\mu$ et
$\ell$, elle v\'erifie:
\begin{align*}&d(\mu(\alpha,
\beta))=-\mu(d\alpha,\beta)+(-1)^{\alpha+1}\mu(\alpha, d\beta) \\&
\hskip-0.8cm\hbox{et} \
d(\ell(\alpha,\beta))=(-1)^{b-a+1}\ell(d\alpha,\beta)+(-1)^{\alpha+b-a+1}\ell(\alpha,d\beta)
.\end{align*}

\subsection{Extension de la multiplication et du crochet \`a la cog\`ebre de Lie codiff\'erentielle}

\

On considère l'espace $\mathcal{A}[-a+1]$ muni du degr\'e
$deg(\alpha)=|\alpha|-1=\alpha$.
 Une permutation $\sigma\in S_{p+q}$ ($p,q\geq1$) est dite un $(p,q)$-shuffle si elle vérifie :
$$
\sigma(1)<\cdots<\sigma(p) \hskip0.4cm \hbox{et} \hskip0.4cm
\sigma(p+1)<\cdots<\sigma(p+q).
$$
On note $Sh(p,q)$ l'ensemble des $(p,q)$-shuffles.

On note aussi
$$
\varepsilon_\alpha\left(\begin{smallmatrix}\alpha_1&\dots&\alpha_n\\
\alpha_{i_1}&\dots&\alpha_{i_n}\end{smallmatrix}\right)=\varepsilon_\alpha(\sigma)
$$
la signature de la permutation $\sigma=\left(\begin{smallmatrix}1&\dots&n\\ i_1&\dots&i_n\end{smallmatrix}\right)$,
en tenant compte des degr\'es de $\alpha_j$, autrement dit, $\varepsilon_\alpha$ est l'unique morphisme de
$S_n$ dans $\mathbb R$ tel que $\varepsilon_\alpha((i,j))=(-1)^{\alpha_i\alpha_j}$.\\

 On d\'efinit ensuite le produit shuffle sur $\bigotimes^+ \mathcal{A}[-a+1]$ par :

$$
sh_{p,q}\left(\alpha_1\otimes...\otimes \alpha_p ,
\alpha_{p+1}\otimes...\otimes \alpha_{p+q}\right)=\sum_{\sigma\in
Sh(p,q)} \varepsilon_{\alpha}(\sigma^{-1})
\alpha_{\sigma^{-1}(1)}\otimes...\otimes
\alpha_{\sigma^{-1}(p+q)}.
$$

 On d\'efinit alors l'espace quotient
$$
\mathcal
H=\underline{\bigotimes}^+\mathcal{A}[-a+1]=\bigoplus_{n\geq1}~^{\bigotimes^n\mathcal{A}[-a+1]}
\diagup_{\sum_{p+q=n}Im(sh_{p,q})}.
$$

Pour
$X=\alpha_1\underline{\otimes}\dots\underline{\otimes}\alpha_n\in\mathcal{H}$,
le degr\'e $dg(X)=\alpha_1+\dots+\alpha_n$ not\'e simplement par
$x$. Sur cet espace, on d\'efinit un cocrochet $\delta$ de degr\'e
$0$  par:
$$
\begin{aligned}
\delta(X)&=\sum_{j=1}^{n-1}\alpha_1\underline{\otimes}\dots\underline{\otimes}
\alpha_j\bigotimes \alpha_{j+1}
\underline{\otimes}\dots\underline{\otimes} \alpha_n\\&
\hskip1cm-\varepsilon_\alpha\left(\begin{smallmatrix}\alpha_1\dots
\alpha_{j}&\alpha_{j+1}\dots \alpha_n\\ \alpha_{j+1}\dots
\alpha_n&\alpha_1\dots
\alpha_j\end{smallmatrix}\right)\alpha_{j+1}\underline{\otimes}\dots\underline{\otimes}
\alpha_n\bigotimes
\alpha_1\underline{\otimes}\dots\underline{\otimes}
\alpha_j\\&=\sum_{U\underline\otimes V=X\atop U,V\neq\emptyset}
U\bigotimes V-(-1)^{v u}V\bigotimes U.
\end{aligned}
$$

On prolonge $\mu$ et $d$ \`a $\mathcal{H}$ comme des
cod\'erivations $\mu_1$ et $d_1$ de $\delta$ de degr\'e $1$ en
posant:
$$
d_1(\alpha_1\underline\otimes \dots \underline\otimes
\alpha_{n})=\sum_{1\leq k\leq
n}(-1)^{\sum_{i<k}\alpha_i}\alpha_1\underline\otimes \dots
\underline\otimes d(\alpha_k)\underline\otimes
\dots\underline\otimes \alpha_{n}
$$et
$$
\mu_1(\alpha_1\underline\otimes \dots \underline\otimes
\alpha_{n})=\sum_{1\leq
k<n}(-1)^{\sum_{i<k}\alpha_i}\alpha_1\underline\otimes \dots
\underline\otimes \mu(\alpha_k,\alpha_{k+1})\underline\otimes
\dots\underline\otimes \alpha_{n}.
$$
Alors, $$(\mu_1\otimes
id+id\otimes\mu_1)\circ\delta=\delta\circ\mu_1, \mu_1^2=0,
(d_1\otimes id+id\otimes d_1)\circ\delta=\delta\circ d_1 \
\hbox{et} \ d_1^2=0.$$ (Voir \cite{[AAC1]})
\\

En posant $D_1=d$, $D_2=\mu$, $D_k=0$, si $k\geq3$ et
\begin{align*}&D(\alpha_1\underline{\otimes}\dots\underline{\otimes}
\alpha_n)=\\&\sum_{1\leq r\leq n\atop 0\leq j\leq
n-r}(-1)^{\sum_{i\leq
j}\alpha_i}\alpha_1\underline{\otimes}\dots\underline{\otimes}
\alpha_j\underline\otimes
D_r(\alpha_{j+1}\underline{\otimes}\dots\underline{\otimes}
\alpha_{j+r})\underline{\otimes}
\alpha_{j+r+1}\underline\otimes\dots\underline{\otimes}
\alpha_n.\end{align*}

Alors, $D=d_1+\mu_1$ est l'unique cod\'erivation de $\delta$ de
degr\'e $1$ qui prolonge $d$ et $\mu$ \`a $\mathcal{H}$.

Elle v\'erifie $$D\circ D=0 \ \hbox{et} \ (D\otimes id+id\otimes
D)\circ\delta=\delta\circ D.$$ On obtient que
$\big(\mathcal{H},\delta,D\big)$ est une cog\`ebre de Lie
codiff\'erentielle, donc, c'est une $C_\infty$
alg\`ebre.\\

On prolonge, ensuite, le crochet $\ell$ \`a $\mathcal{H}$.

\begin{proposition}

\

Sur $\mathcal{H}$, il existe un unique "crochet" $\ell_2$, de
degr\'e $b-a+1$, v\'erifiant:
\begin{equation}
\delta\circ\ell_2=(\ell_2\otimes
id)\circ\big(\tau_{23}\circ(\delta\otimes id)
+id\otimes\delta\big)+(id\otimes\ell_2)\circ\big(\delta\otimes
id+\tau_{12}\circ(id\otimes \delta)\big).
\end{equation}
Ce crochet est d\'efini pour
$X=\alpha_1\underline{\otimes}\dots\underline{\otimes} \alpha_p$
et $Y=\alpha_{p+1}\underline{\otimes}\dots\underline{\otimes}
\alpha_{p+q}$ par:
\begin{align*} \ell_2(X,Y)=&\hskip-0.6cm\sum_{\sigma\in Sh(p,q)\atop
k,\sigma^{-1}(k)\leq p<\sigma^{-1}(k+1)}\hskip-0.6cm
\varepsilon_\alpha(\sigma^{-1})(-1)^{(b-a+1)\sum_{s<k}\alpha_{\sigma^{-1}(s)}}\times\\&\hskip1.4cm\times
\alpha_{\sigma^{-1}(1)}\underline{\otimes}\dots\underline{\otimes}\ell(\alpha_{\sigma^{-1}(k)},
\alpha_{\sigma^{-1}(k+1)})\underline{\otimes}
\dots\underline{\otimes}\alpha_{\sigma^{-1}(p+q)}.\end{align*}

\end{proposition}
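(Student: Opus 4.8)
The plan is to exploit that $(\mathcal{H},\delta)$ is the \emph{cofree} conilpotent Lie coalgebra cogenerated by $V:=\mathcal{A}[-a+1]$. Write $\pi:\mathcal{H}\to V$ for the projection onto the weight-one part (the corestriction onto the cogenerators). One uses that $\delta$ strictly shortens the length of a word and that the primitives of $\mathcal{H}$ are exactly the cogenerators $V$; equivalently, $\delta$ is injective on $\bigoplus_{n\geq2}\mathcal{H}_n$. The relation $(3.1)$ is nothing but a co-Leibniz rule: expanding its right-hand side on $X\otimes Y$ with $\delta(X)=\sum X_{(1)}\otimes X_{(2)}$ and $\delta(Y)=\sum Y_{(1)}\otimes Y_{(2)}$ produces the four families $\ell_2(X_{(1)},Y)\otimes X_{(2)}$, $\ell_2(X,Y_{(1)})\otimes Y_{(2)}$, $X_{(1)}\otimes\ell_2(X_{(2)},Y)$ and $Y_{(1)}\otimes\ell_2(X,Y_{(2)})$, with the Koszul signs introduced by $\tau_{12}$ and $\tau_{23}$. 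Thus $(3.1)$ says precisely that $\ell_2$ is a coderivation of $\delta$ in each of its two arguments, and this is the binary analogue of the classical principle that a coderivation of a cofree coalgebra is determined by its corestriction.

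Uniqueness then follows by induction on the total weight $n=p+q$. Since the displayed formula (and indeed any solution of $(3.1)$ extending $\ell$) sends $\mathcal{H}_p\otimes\mathcal{H}_q$ into $\mathcal{H}_{p+q-1}$, the value $\ell_2(X,Y)$ is homogeneous of weight $n-1$. For $n=2$ one has $p=q=1$ and the normalisation $\pi\circ\ell_2=\ell$ forces $\ell_2(\alpha_1,\alpha_2)=\ell(\alpha_1,\alpha_2)$. For $n\geq3$ the right-hand side of $(3.1)$ only involves $\ell_2$ evaluated on pairs of strictly smaller total weight, because $\delta$ shortens each factor; hence it is already determined by the induction hypothesis, and as $n-1\geq2$ the injectivity of $\delta$ on $\mathcal{H}_{n-1}$ recovers $\ell_2(X,Y)$ uniquely. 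This shows there is at most one extension of $\ell$ of degree $b-a+1$ satisfying $(3.1)$.

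For existence I would take the displayed shuffle sum as the \emph{definition} of $\ell_2$ and verify the two required facts. First, that it descends to the quotient $\mathcal{H}$: a priori it is defined on representatives in $\bigotimes^+V$, and one checks that replacing $X$ or $Y$ by a shuffle-equivalent representative alters the output only by an element of $\sum_{p+q}Im(sh_{p,q})$, so $\ell_2$ is well defined on $\mathcal{H}\otimes\mathcal{H}$. Second, that it satisfies $(3.1)$: one expands $\delta\circ\ell_2(X,Y)$ by the word-splitting definition of $\delta$ and reorganises the resulting sum according to whether the cut separating the two output factors falls to the left or to the right of the bracketed pair $\ell(\alpha_{\sigma^{-1}(k)},\alpha_{\sigma^{-1}(k+1)})$; the four resulting groups match, term by term, the families displayed in the first paragraph. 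Since the formula manifestly has corestriction $\ell$ on $V\otimes V$ and zero in higher weight, the uniqueness just proved identifies it with the claimed $\ell_2$.

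The main obstacle is entirely the sign bookkeeping in this last verification. The signs $\varepsilon_\alpha(\sigma^{-1})$, the shift factor $(-1)^{(b-a+1)\sum_{s<k}\alpha_{\sigma^{-1}(s)}}$, and the Koszul signs produced by $\tau_{12}$ and $\tau_{23}$ must be shown to combine coherently when a shuffle of $X$ and $Y$ is recut by $\delta$. This is the delicate, computation-heavy heart of the argument, and it is also what makes the well-definedness modulo shuffles non-automatic rather than a formality.
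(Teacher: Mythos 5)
The paper gives no proof of this proposition at all: the whole of Section~3 is a summary of the $(a,b)$-algebra construction imported from \cite{[A]}, and the statement is quoted there without argument. So there is no internal proof to compare yours against; I can only judge your outline on its own merits.

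Your strategy is the standard and correct one: read the relation as a co-Leibniz rule (your expansion into the four families $\ell_2(X_{(1)},Y)\otimes X_{(2)}$, $\ell_2(X,Y_{(1)})\otimes Y_{(2)}$, $X_{(1)}\otimes\ell_2(X_{(2)},Y)$, $Y_{(1)}\otimes\ell_2(X,Y_{(2)})$ is the right reading of the two $\tau$'s), then use cofreeness of $(\mathcal H,\delta)$ over $V=\mathcal A[-a+1]$ to get uniqueness from the corestriction, and verify the explicit shuffle formula directly. Two caveats. First, in the uniqueness induction, $\delta$ only determines $\ell_2(X,Y)$ up to a primitive (weight-one) element; you sidestep this by assuming $\ell_2$ lands in weight $p+q-1$, but the honest statement is that uniqueness holds among maps with prescribed corestriction $\pi\circ\ell_2$ (equal to $\ell$ on $V\otimes V$ and zero on pairs of total weight at least $3$) --- you say this at the end, but it should be built into the induction rather than assumed as homogeneity. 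Second, and more importantly, what you submit is a plan rather than a proof: the descent of the shuffle formula to the quotient $\mathcal H=T^+(V)/\sum\mathrm{Im}(sh_{p,q})$ and the sign-correct matching of $\delta\circ\ell_2(X,Y)$ against the four families are exactly the content of the proposition, and you explicitly defer both. Since the paper itself defers them to \cite{[A]}, your outline is a reasonable reconstruction of what a proof must do, but it does not yet constitute one.
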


\subsection{Alg\`ebre de Lie diff\'erentielle gradu\'ee associ\'ee \`a une $(a,b)$-alg\`ebre diff\'erentielle}

\

On consid\`ere, maintenant, l'espace $\mathcal{H}[a-b-1]$ muni de
la graduation $dg'(X)=dg(X)-a+b+1$ not\'e simplement par $x'$ pour
$X\in\mathcal{H}[a-b-1]$. On pose
$\ell'_2(X,Y)=(-1)^{(a-b-1)dg'(X)}\ell_2(X,Y)$. Alors, le crochet
$\ell'_2$ est de degr\'e $0$ dans $\mathcal{H}[a-b-1]$ et la
diff\'erentielle $D$ reste de degr\'e $1$. Et on a

\begin{proposition}

\

L'espace $\mathcal H[a-b-1]$, muni du crochet $\ell'_2$ et de la
diff\'erentielle
 $D$ est une alg\`ebre de Lie diff\'erentielle
gradu\'ee: Pour tout $X$, $Y$ et $Z$ de $\mathcal H[a-b-1]$, on a:
\begin{align*}
&{\bf(i)} \quad\ell'_2(X,Y)=-(-1)^{x'y'}\ell'_2(Y,X),\\& {\bf(ii)}
\quad
(-1)^{x'z'}\ell'_2\Big(\ell'_2(X,Y),Z\Big)+(-1)^{y'x'}\ell'_2\Big(\ell'_2(Y,Z),X\Big)
\\& \hskip4.9cm+(-1)^{z'y'}\ell'_2\Big(\ell'_2(Z,X),Y\Big)=0,\\&
{\bf(iii)} \quad
D\Big(\ell'_2(X,Y)\Big)=\ell'_2\Big(D(X),Y\Big)+(-1)^{x'}\ell'_2\Big(X,D(Y)\Big).
\end{align*}
\end{proposition}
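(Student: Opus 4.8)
The plan is to verify the three properties of a differential graded Lie algebra for the bracket $\ell'_2$ on $\mathcal{H}[a-b-1]$, reducing each to a corresponding property already available one level down. The fundamental strategy is that $\ell_2$ was \emph{constructed} from the underlying bracket $\ell$ on $\mathcal{A}[-a+1]$, which itself satisfies the graded antisymmetry, graded Jacobi identity and graded Leibniz/derivation relations (i)--(v) of the previous subsection. So each DGLA axiom for $\ell'_2$ should follow by pushing those pointwise identities through the shuffle-sum formula defining $\ell_2$ and then tracking the degree shift $dg'(X)=dg(X)-a+b+1$ together with the sign twist $\ell'_2(X,Y)=(-1)^{(a-b-1)x'}\ell_2(X,Y)$.

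First I would treat antisymmetry (i). Here I expect the cleanest argument: $\ell_2(X,Y)$ and $\ell_2(Y,X)$ are sums over the same shuffles of $Sh(p,q)$ and $Sh(q,p)$, and the pointwise antisymmetry of $\ell$ (relation (iii), which reads $\ell(\alpha,\beta)=-(-1)^{b-a+1}(-1)^{\alpha\beta}\ell(\beta,\alpha)$) combines with the Koszul signs $\varepsilon_\alpha$ from reordering the two blocks to give the claimed relation. The degree-normalizing factor $(-1)^{(a-b-1)x'}$ is precisely engineered to absorb the residual $(-1)^{b-a+1}$ and turn the twisted antisymmetry of $\ell$ into the clean graded antisymmetry $\ell'_2(X,Y)=-(-1)^{x'y'}\ell'_2(Y,X)$ of degree $0$. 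I would verify this sign bookkeeping carefully on a representative shuffle and argue it is uniform.

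Next, for the compatibility (iii) with the differential $D$, the key is that $D$ was already shown to be a codifferential of $\delta$ extending $d$ and $\mu$, and that $d$ acts as a derivation of $\ell$ (the last displayed relation of the previous subsection). I would argue that $D\circ\ell_2-(\text{Leibniz terms})$ is a coderivation-type expression that, when evaluated, reduces summand-by-summand to the derivation property of $d$ for $\ell$ and the already-established derivation property of $\mu_1,d_1$ for $\delta$; the shuffle structure is respected because $D$ and $\ell_2$ are both compatible with $\delta$. The cleanest route is to use uniqueness: both sides are coderivations of $\delta$ of the appropriate degree that agree on the pointwise data, hence coincide.

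The hard part will be the graded Jacobi identity (ii). Unlike (i) and (iii), it is quadratic in $\ell_2$, so expanding $\ell'_2(\ell'_2(X,Y),Z)$ produces a double shuffle sum whose terms must be reorganized and matched against the cyclic sum. I would first rewrite everything in terms of $\ell_2$ (absorbing the degree twists, which again collapse the $(a-b-1)$ and $b-a+1$ factors), then split the resulting terms according to whether the two applications of $\ell$ act on overlapping or disjoint pairs of arguments. The disjoint-pair terms should cancel in pairs under the cyclic permutation of $(X,Y,Z)$ by antisymmetry, while the overlapping terms assemble into the pointwise Jacobi identity (iv) for $\ell$. The genuine obstacle is controlling the shuffle signs $\varepsilon_\alpha(\sigma^{-1})$ and the iterated factors $(-1)^{(b-a+1)\sum_{s<k}\alpha_{\sigma^{-1}(s)}}$ across the composition, and confirming that the defining cocommutation relation (3.1) for $\ell_2$ is exactly what guarantees the double sum organizes correctly; I would lean on (3.1) to reduce the computation to the one-argument-lower Jacobi relation rather than attempting a brute-force expansion.
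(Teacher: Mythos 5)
The paper does not in fact prove this proposition: Section 3 only recalls the $(a,b)$-algebra construction, and the statement is given there without proof, the details being deferred to the reference [A]. Your plan is nonetheless correct in outline and coincides with the method the paper does spell out for the closely analogous pre-Lie statement (the theorem on $(\mathcal H[a-b-1],R_2',D)$ in Section 6): there the quadratic identity is verified exactly as you propose for Jacobi, by classifying the terms of the double shuffle sum into types --- iterated brackets on an overlapping triple $(\alpha,\beta,\gamma)$ with one entry from each of $X,Y,Z$, which cancel via the pointwise identity, and pairs of brackets on disjoint arguments, which cancel two by two using the commutativity of the shuffle product --- so your strategy of separating overlapping from disjoint pairs is the right one. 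Your sign analysis for (i) is also correct: with $x'=x+b-a+1$, the twist $(-1)^{(a-b-1)x'}$ does convert the $(b-a+1)$-shifted antisymmetry of $\ell_2$ into genuine degree-$0$ antisymmetry. The one place where your sketch is thinner than it should be is (iii): since $D=d_1+\mu_1$, the corestriction to cogenerators of $D\circ\ell_2-\ell_2\circ(D\otimes id)-\ell_2\circ(id\otimes D)$ involves not only the derivation property of $d$ for $\ell$ but also the Leibniz compatibility (v) between $\mu$ and $\ell$ (matching terms of the form $\ell(\mu(\alpha,\beta),\gamma)$ against $\mu(\alpha,\ell(\beta,\gamma))$ and $\mu(\ell(\alpha,\gamma),\beta)$ on the component $\otimes^2\otimes\otimes^1$); your uniqueness-of-coderivations frame is sound, but you must explicitly name (v) as the pointwise input, otherwise that component of the verification is unaccounted for.
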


\subsection{La $L_\infty$ alg\`ebre $S^+({\mathcal H}[a-b])$}

\

Dans le paragraphe pr\'ec\'edent, on a montr\'e que $\Big(\mathcal
H[a-b-1], \ell'_2,D\Big)$ est une alg\`ebre de Lie
diff\'erentielle gradu\'ee. On consid\`ere l'espace
$\mathcal{H}[a-b]$ muni de la graduation
$$dg''(X)=dg'(X)-1=dg(X)-a+b:=x''\ , \ \hbox{pour tout} \ X\in\mathcal{H}[a-b].$$

On voudrait construire la cog\`ebre cocommutative coassociative
$(S^+(\mathcal{H}[a-b]),\Delta)$, o\`u
$S^+(\mathcal{H}[a-b])=\bigoplus_{n\geq1}S^n(\mathcal{H}[a-b])$ et
$\Delta$ est son coproduit qui est de degr\'e $0$ et d\'efini par:

$\forall X_1\dots X_n\in S^n(\mathcal{H}[a-b]) $,
\begin{align*}\Delta(X_1\dots X_n)=\sum_{I\cup J=\{1,\dots n\}\atop \#I, \#J>0}
\varepsilon_{x''}\left(\begin{smallmatrix}x_1\dots x_n\\
x_I x_J\end{smallmatrix}\right)X_I\bigotimes X_J.\end{align*} Le
crochet $\ell'_2$ \'etait antisym\'etrique de degr\'e $0$ sur
$\mathcal{H}[a-b-1]$. Comme l'on veut une cod\'erivation de
degr\'e $1$ pour $\Delta$, on pose
$\ell''_2(X,Y)=(-1)^{x''}\ell'_2(X,Y)$ qui est une application
sym\'etrique sur $\mathcal{H}[a-b]$ de degr\'e $1$. On a

\begin{proposition}

\

Pour tout $X$, $Y$, $Z\in\mathcal H[a-b]$, on a:
\begin{align*}&{\bf (i)} \
 \ell''_2(X,Y)=(-1)^{x''y''}\ell''_2(Y,X), \cr&{\bf (ii)} \ (-1)^{x''z''}\ell''_2(\ell_2''(X,Y),Z)+(-1)^{y''x''}\ell''_2
 (\ell''_2(Y,Z),X)\\&\hskip4.7cm
 +(-1)^{z''y''}\ell''_2(\ell''_2(Z,X),Y)=0,\cr&{\bf (iii)} \  D(\ell''_2(X,Y))=-\ell''_2(D(X),Y)
+(-1)^{1+x''}\ell''_2(X,D(Y)).\end{align*}

\end{proposition}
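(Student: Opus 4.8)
Le plan est de tout d\'eduire de la relation de d\'ecalage
$$\ell''_2(X,Y)=(-1)^{x''}\ell'_2(X,Y)$$
et des trois propri\'et\'es de l'alg\`ebre de Lie diff\'erentielle gradu\'ee $\big(\mathcal H[a-b-1],\ell'_2,D\big)$ \'etablies \`a la proposition pr\'ec\'edente, le seul ingr\'edient combinatoire \'etant le lien $x'=x''+1$ entre les deux graduations. Il s'agit donc d'une pure v\'erification de signes: aucune information nouvelle sur $\ell_2$ n'intervient, et l'on pourra r\'e\'ecrire indiff\'eremment $\ell'_2(X,Y)=(-1)^{x''}\ell''_2(X,Y)$ puisque $(-1)^{-x''}=(-1)^{x''}$.

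Pour {\bf(i)}, j'appliquerais l'antisym\'etrie $\ell'_2(X,Y)=-(-1)^{x'y'}\ell'_2(Y,X)$ puis je repasserais \`a $\ell''_2$ des deux c\^ot\'es. La simplification repose sur l'identit\'e $x'y'=x''y''+x''+y''+1$ (cons\'equence de $x'=x''+1$ et $y'=y''+1$), qui transforme le pr\'efacteur $-(-1)^{x''+y''+x'y'}$ en $(-1)^{x''y''}$, d'o\`u la sym\'etrie voulue.

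La propri\'et\'e {\bf(iii)} se traite de la m\^eme mani\`ere en partant de $D\big(\ell'_2(X,Y)\big)=\ell'_2(D(X),Y)+(-1)^{x'}\ell'_2(X,D(Y))$. Comme $D$ est de degr\'e $1$, on a $dg''(D(X))=x''+1$, donc $\ell'_2(D(X),Y)=-(-1)^{x''}\ell''_2(D(X),Y)$ et $\ell'_2(X,D(Y))=(-1)^{x''}\ell''_2(X,D(Y))$; en reportant dans $D\big(\ell''_2(X,Y)\big)=(-1)^{x''}D\big(\ell'_2(X,Y)\big)$ et en utilisant $(-1)^{x'}=(-1)^{1+x''}$, on obtient le signe $-1$ devant $\ell''_2(D(X),Y)$ et le signe $(-1)^{1+x''}$ devant $\ell''_2(X,D(Y))$.

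Le point le plus d\'elicat est {\bf(ii)}. L'id\'ee est de comparer terme \`a terme la somme cyclique voulue \`a l'identit\'e de Jacobi d\'ej\`a connue pour $\ell'_2$. Pour le terme $\ell''_2(\ell''_2(X,Y),Z)$, il faut prendre garde que l'argument interne $\ell''_2(X,Y)$ est de $dg''$-degr\'e $x''+y''+1$; un calcul direct donne alors
$$\ell''_2(\ell''_2(X,Y),Z)=(-1)^{y''+1}\ell'_2(\ell'_2(X,Y),Z),$$
et de m\^eme pour les deux autres termes par permutation cyclique. En comparant le pr\'efacteur de chaque terme \`a celui du terme correspondant de la Jacobi de $\ell'_2$ (o\`u $x'z'=x''z''+x''+z''+1$, et cycliquement), on v\'erifie que les trois termes acqui\`erent le m\^eme facteur global $(-1)^{x''+y''+z''}$. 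La somme cyclique pour $\ell''_2$ vaut donc $(-1)^{x''+y''+z''}$ fois la somme cyclique pour $\ell'_2$, laquelle est nulle; d'o\`u {\bf(ii)}. La principale difficult\'e est pr\'ecis\'ement de contr\^oler ces signes et de constater que ce facteur global est commun aux trois termes, ce qui autorise sa factorisation.
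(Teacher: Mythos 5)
Votre d\'emonstration est correcte et suit exactement la d\'emarche attendue par l'article, qui \'enonce cette proposition comme cons\'equence directe du d\'ecalage $\ell''_2(X,Y)=(-1)^{x''}\ell'_2(X,Y)$ appliqu\'e \`a la proposition pr\'ec\'edente sur $\big(\mathcal H[a-b-1],\ell'_2,D\big)$. Les v\'erifications de signes (en particulier le facteur commun $(-1)^{x''+y''+z''}$ dans la somme cyclique de {\bf(ii)} et l'identit\'e $x'y'=x''y''+x''+y''+1$) sont exactes.
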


\

On prolonge $\ell''_2$ \`a $S^+(\mathcal{H}[a-b])$ de fa\c{c}on
unique comme une cod\'erivation $\ell''$ de $\Delta$ de degr\'e
$1$ en posant:
$$\ell''(X_1\dots
X_n)=\displaystyle\sum_{i<j}\varepsilon_{x''}\left(\begin{smallmatrix}x_1\dots x_n\\
x_i x_jx_1\dots \widehat{ij}\dots x_n
\end{smallmatrix}\right)\ell''_{2}
(X_i,X_j).X_1\dots\widehat{ij}\dots X_n.$$ En utilisant
l'identit\'e de
Jacobi, on peut v\'erifier que $\ell''\circ\ell''=0$.\\

On prolonge, aussi, la diff\'erentielle $D$ \`a
$S^+(\mathcal{H}[a-b])$ comme l'unique cod\'erivation $m$ de
$\Delta$ toujours de degr\'e $1$ en posant:
$$m(X_1\dots
X_n)=\displaystyle\sum_{i=1}^{n}\varepsilon_{x''}\left(\begin{smallmatrix}x_1\dots x_n\\
x_i x_1\dots \widehat{i}\dots x_n
\end{smallmatrix}\right)D(X_i).X_1\dots\widehat{i}\dots
X_n.$$ Elle v\'erifie $m\circ m=0$.

On pose $Q=m+\ell''$, ou $Q_1=D$, $Q_2=\ell''_2$, $Q_k=0$, ~si $
k\geq3$ et
$$Q(X_1\dots X_n)=\displaystyle\sum_{I\cup
J=\{1,\ldots,n\}\atop I\neq\emptyset}\varepsilon_{x''}\left(\begin{smallmatrix}x_1\dots x_n\\
x_I x_J\end{smallmatrix}\right)Q_{\#I} (X_{I}).X_J.$$ Alors, $Q$
v\'erifie $Q^2=0$ et $\left(Q\otimes
id+id\otimes Q\right)\circ\Delta=\Delta\circ Q$.\\

Donc, le complexe $\left(S^+(\mathcal{H}[a-b]),\Delta,Q\right)$
est une cog\`ebre cocommuative coassociative et
codiff\'erentielle, c'est à dire une $L_\infty$
alg\`ebre.\\

\subsection{La $C_\infty$ alg\`ebre $S^+({\mathcal H}[a-b])$}

\

L'espace $\big(\mathcal{H},\delta,D\big)$ \'etant une cog\`ebre de
Lie codiff\'erentielle, on d\'efinit un cocrochet $\delta''$ de
degr\'e $a-b$ sur $\mathcal{H}[a-b]$ par:
$$\delta''(X)=\sum_{U\underline\otimes V=X\atop U,V\neq\emptyset}(-1)^{(a-b)u''}\left( U\bigotimes
V+(-1)^{u''v''+a-b+1}V\bigotimes U\right).$$

On prolonge $\delta''$ \`a $S^+(\mathcal{H}[a-b])$ par:
\begin{align*}\delta''(X_1\dots X_n)=&\hskip-0.4cm\displaystyle\sum_{1\leq s\leq n\atop I\cup
J=\{1,\dots,n\}\setminus\{s\}}\hskip-0.6cm\varepsilon_{x''}\left(\begin{smallmatrix}x_1\dots x_n\\
x_Ix_s x_J\end{smallmatrix}\right)\sum_{U_s\underline\otimes
V_s=X_s\atop U_s,V_s\neq\emptyset
}(-1)^{(a-b)(x_I''+u_s'')}\times\\&\times\left(X_I. U_s\bigotimes
V_s . X_J+(-1)^{u_s''v_s''+a-b+1}X_I. V_s\bigotimes U_s.
X_J\right),\end{align*} qui s'écrit encore
\begin{align*}
&\delta''(X_1\dots X_n)=\sum_{\begin{smallmatrix}1\leq s\leq n\\
I\cup
J=\{1,\dots,n\}\setminus\{s\}\end{smallmatrix}}(-1)^{\sum_{i<s}(a-b)x_i''}\sum_{\begin{smallmatrix}U_s\otimes
V_s=X_s\\ U_s,V_s\neq\emptyset\end{smallmatrix}
}(-1)^{(a-b)u_s''}\times\cr&\times\left(\varepsilon_{x''}\left(\begin{smallmatrix}
x_1\dots x_n\\ x_I~u_s~v_s~x_J\end{smallmatrix}\right)X_I.
U_s\bigotimes \mu V_s .
X_J+(-1)^{a-b+1}\varepsilon_{x''}\left(\begin{smallmatrix} x_1\dots x_n\\
x_I~v_s~u_s~x_J\end{smallmatrix}\right)X_I.\mu V_s\bigotimes U_s .
X_J\right),
\end{align*}
avec \begin{align*}\varepsilon_{x''}\left(\begin{smallmatrix}
x_1\dots x_n\\ x_I~u_s~v_s~x_J\end{smallmatrix}\right)=
\varepsilon_{x''}\left(\begin{smallmatrix} x_1\dots x_n\\
x_I~x_s~x_J\end{smallmatrix}\right)(-1)^{\sum_{i<s\atop i\in
J}x_i''}(-1)^{\sum_{i>s\atop i\in I}x_i''}.
\end{align*}

Alors, $\delta''$ est un cocrochet sur $S^+(\mathcal{H}[a-b])$ de
degr\'e $a-b$. En notant $\tau''$ la volte dans
$S^+(\mathcal{H}[a-b])$, $\delta''$ v\'erifie:

\begin{proposition}
\begin{align*}& i) \
 \tau''\circ\delta''=-(-1)^{a-b}\delta'':  \hbox{$\delta''$ est $(a-b)$-coantisym\'etrique}, \cr& ii) \ \Big(id^{\otimes3}+\tau_{12}''\circ\tau_{23}''+\tau_{23}''\circ\tau_{12}''\Big)\circ(\delta''\otimes
id)\circ\delta''=0:  \hbox{identit\'e de coJacobi},\cr& iii) \
(id\otimes\Delta)\circ\delta''=(\delta''\otimes
id)\circ\Delta+\tau_{12}''\circ(id\otimes\delta'')\circ\Delta:
\hbox{identit\'e de coLeibniz}.\end{align*}

\end{proposition}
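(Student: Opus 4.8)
L'idée est de déduire les trois identités du seul fait structurel déjà disponible, à savoir que $(\mathcal{H},\delta,D)$ est une cogèbre de Lie, puis de les propager à travers le décalage $[a-b]$ et l'extension à $S^+(\mathcal{H}[a-b])$. Le principe directeur est que $\delta''$ agit sur un mot $X_1\dots X_n$ en choisissant une lettre $X_s$, en la coupant comme dans la formule à un facteur, et en distribuant les lettres restantes dans les deux emplacements de sortie avec le signe $\varepsilon_{x''}$; par conséquent chaque identité se ramène à son analogue à un facteur, accompagné d'une comptabilité de signes purement combinatoire provenant du coproduit cocommutatif $\Delta$.

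Je commencerais par établir les trois identités pour $\delta''$ restreint à un seul facteur $X\in\mathcal{H}[a-b]$. Pour $(i)$, l'expression $U\otimes V+(-1)^{u''v''+a-b+1}V\otimes U$ est construite de sorte que l'application de la volte graduée $\tau''$, avec $\tau''(U\otimes V)=(-1)^{u''v''}V\otimes U$ et $(-1)^{2(a-b)}=1$, redonne exactement $-(-1)^{a-b}\delta''(X)$; c'est la forme décalée de la coantisymétrie de degré $0$, $\tau\circ\delta=-\delta$, de $\delta$ sur $\mathcal{H}$. Pour $(ii)$, la double coupure $(\delta''\otimes id)\circ\delta''(X)$ découpe $X$ en trois blocs non vides, et la somme pondérée par $id^{\otimes3}+\tau_{12}''\circ\tau_{23}''+\tau_{23}''\circ\tau_{12}''$ est l'image par le décalage de l'identité de coJacobi valable pour $\delta$ sur la cogèbre de Lie $\mathcal{H}$, donc s'annule. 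L'identité $(iii)$ est triviale sur un facteur: le membre de droite s'annule car $\Delta(X)=0$ pour $X\in S^1(\mathcal{H}[a-b])$, et le membre de gauche aussi car $id\otimes\Delta$ applique $\Delta$ à un élément de $S^1$.

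J'étendrais ensuite $(i)$ et $(ii)$ à un mot quelconque. La coantisymétrie résulte de la combinaison de la coantisymétrie à un facteur de la lettre coupée $X_s$ avec la symétrie de la distribution des lettres spectatrices $X_I,X_J$, le poids $(-1)^{(a-b)u_s''}$ et la réindexation $I\leftrightarrow J$ produisant précisément le signe global $-(-1)^{a-b}$. Pour la coJacobi, je séparerais $(\delta''\otimes id)\circ\delta''(X_1\dots X_n)$ en une partie diagonale, où les deux coupures tombent sur la même lettre $X_s$, et une partie hors-diagonale, où elles tombent sur deux lettres distinctes $X_s,X_t$. La partie diagonale s'annule dans la somme cyclique grâce à la coJacobi à un facteur déjà prouvée, tandis que la partie hors-diagonale se compense deux à deux sous l'échange $X_s\leftrightarrow X_t$; cette dernière compensation est imposée par la coantisymétrie $(i)$ jointe aux signes de transposition attachés à l'échange de $s$ et $t$.

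Le principal obstacle sera l'identité de coLeibniz $(iii)$, la seule mêlant véritablement $\delta''$ au coproduit $\Delta$. Je développerais les deux membres sur un mot $X_1\dots X_n$: à gauche, $\delta''$ choisit d'abord une lettre $X_s=U_s\underline\otimes V_s$ et distribue les autres, après quoi $id\otimes\Delta$ recoupe le second emplacement; à droite, $\Delta$ sépare d'abord le mot en $X_I$ et $X_J$, puis $\delta''$ coupe une lettre à l'intérieur de l'une des deux pièces. Les termes se correspondent selon que la lettre coupée $X_s$ aboutit dans le premier bloc (pris en compte par $(\delta''\otimes id)\circ\Delta$) ou dans le second bloc (pris en compte par $\tau_{12}''\circ(id\otimes\delta'')\circ\Delta$, la volte fournissant le réordonnancement correct). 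Le point délicat est de concilier les trois sources de signes imbriquées, à savoir le poids $(-1)^{(a-b)u_s''}$ provenant de $\delta''$, le signe de battage $\varepsilon_{x''}$ de la distribution, et le signe de transposition supplémentaire engendré par $\tau_{12}''$; je m'attends à ce que ce soit le plus long calcul, bien qu'il ne requière aucun apport structurel au-delà de la coantisymétrie $(i)$ et des formules explicites de $\delta''$ et $\Delta$.
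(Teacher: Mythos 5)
Votre strat\'egie est la bonne et c'est, pour l'essentiel, la seule disponible : l'article lui-m\^eme n'offre aucune d\'emonstration de cette proposition (elle est \'enonc\'ee sans preuve, les v\'erifications \'etant renvoy\'ees implicitement \`a la r\'ef\'erence [A] sur les $(a,b)$-alg\`ebres, comme le texte le dit explicitement plus loin pour la coJacobi). La r\'eduction au cas d'un seul facteur --- o\`u $\delta''$ n'est que le d\'ecal\'e du cocrochet de Lie $\delta$ de $\mathcal H$, ce qui donne $(i)$ et $(ii)$ sur $S^1$, et o\`u $(iii)$ est triviale puisque $\Delta$ s'annule sur $S^1$ --- puis l'extension \`a $S^+(\mathcal H[a-b])$ par s\'eparation des termes diagonaux et hors-diagonaux pour la coJacobi et par appariement des termes selon le bloc de $\Delta$ qui re\c{c}oit la lettre coup\'ee pour la coLeibniz, est exactement le sch\'ema attendu.

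La r\'eserve est que, telle quelle, votre proposition reste un plan : pour des identit\'es de ce type, tout le contenu math\'ematique est dans les signes de Koszul que vous reportez syst\'ematiquement, et ce report n'est pas anodin. Un exemple concret : l'article donne deux \'ecritures du prolongement de $\delta''$ \`a $S^+(\mathcal H[a-b])$, l'une avec le pr\'efacteur $(-1)^{(a-b)(x_I''+u_s'')}$, l'autre avec $(-1)^{(a-b)\sum_{i<s}x_i''}(-1)^{(a-b)u_s''}$, et ces deux pr\'efacteurs ne co\"{\i}ncident pas en g\'en\'eral. Un calcul direct de $\tau''\circ\delta''$ sur $X_1X_2$ avec la premi\`ere \'ecriture laisse un r\'esidu $(-1)^{(a-b)(x_I''+x_J'')}$ par rapport \`a $-(-1)^{a-b}\delta''$, c'est-\`a-dire que la coantisym\'etrie $(i)$ \'echoue d\`es que $a-b$ est impair et qu'un spectateur est de degr\'e impair ; c'est la seconde \'ecriture qui est la bonne. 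Autrement dit, la v\'erification des signes que vous qualifiez de simple comptabilit\'e est pr\'ecis\'ement l'endroit o\`u la proposition se joue : il faut au minimum fixer la formule exacte du prolongement et mener le calcul de $(i)$ jusqu'au bout, puisque $(ii)$ et $(iii)$ reposent dessus.
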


Ainsi $(S^+(\mathcal{H}[a-b]),\delta'')$ est une cog\`ebre de Lie.
On montre qu'avec $Q=m+\ell''$, elle est codiff\'erentielle.

\begin{proposition}

\

 $m$ et $\ell''$ sont des cod\'erivations de $\delta''$ de
degr\'e $1$, ils v\'erifient:

{\bf(i)} $\left(m\otimes id+id\otimes
m\right)\circ\delta''=(-1)^{a-b}\delta''\circ m$.

{\bf(ii)} $\left(\ell''\otimes id+id\otimes
\ell''\right)\circ\delta''=(-1)^{a-b}\delta''\circ \ell''$.
\end{proposition}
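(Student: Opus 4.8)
La d\'emonstration repose sur deux r\'esultats d\'ej\`a acquis au niveau de $\mathcal{H}$: d'une part le fait que $D$ est une cod\'erivation du cocrochet $\delta$, c'est-\`a-dire $(D\otimes id+id\otimes D)\circ\delta=\delta\circ D$, et d'autre part la relation de compatibilit\'e entre $\ell_2$ et $\delta$, \`a savoir
\[
\delta\circ\ell_2=(\ell_2\otimes id)\circ\big(\tau_{23}\circ(\delta\otimes id)+id\otimes\delta\big)+(id\otimes\ell_2)\circ\big(\delta\otimes id+\tau_{12}\circ(id\otimes \delta)\big).
\]
Le degr\'e $1$ de $m$ et de $\ell''$ \'etant imm\'ediat par construction (ils prolongent les applications $D$ et $\ell''_2$ de degr\'e $1$), tout le contenu des identit\'es {\bf(i)} et {\bf(ii)} r\'eside dans leur v\'erification. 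L'id\'ee g\'en\'erale est que $m$, $\ell''$ et $\delta''$ sont tous obtenus par prolongement \`a $S^+(\mathcal{H}[a-b])$ d'op\'erations d\'efinies sur $\mathcal{H}[a-b]$; il suffira donc d'\'evaluer les deux membres sur un mon\^ome $X_1\dots X_n$ et de ramener la comparaison aux deux relations ci-dessus portant sur chaque facteur $X_s\in\mathcal{H}$, en contr\^olant soigneusement les signes de Koszul $\varepsilon_{x''}$ issus des d\'ecalages et des r\'eordonnancements.

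Pour le point {\bf(i)}, l'op\'erateur $m$ applique $D$ \`a un unique facteur $X_i$, tandis que $\delta''$ scinde un unique facteur $X_s$ via $\delta$ en $U_s\underline\otimes V_s$ (dans les deux ordres) et r\'epartit les facteurs restants entre les deux brins du tenseur. On distingue alors deux cas. Si $i\neq s$, l'application de $D$ et la scission portent sur des facteurs distincts et commutent: les termes correspondants des deux membres se correspondent un \`a un, au signe $(-1)^{a-b}$ pr\`es provenant du passage de $m$ \`a travers $\delta''$ de degr\'e $a-b$. Si $i=s$, il faut comparer $\delta\big(D(X_s)\big)$ \`a la scission de $X_s$ suivie de $D$ sur l'un des brins: c'est exactement la relation $(D\otimes id+id\otimes D)\circ\delta=\delta\circ D$ appliqu\'ee dans $\mathcal{H}$. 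La sommation sur $s$ et la redistribution des facteurs reconstituent alors l'identit\'e annonc\'ee.

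Pour le point {\bf(ii)}, on proc\`ede de m\^eme, mais $\ell''$ fusionne d\'esormais deux facteurs $X_i,X_j$ en $\ell''_2(X_i,X_j)$. Les cas o\`u $\delta''$ scinde un facteur $X_s$ avec $s\notin\{i,j\}$ se traitent comme ci-dessus: fusion et scission commutent. L'essentiel du calcul est le cas o\`u $\delta''$ scinde pr\'ecis\'ement le facteur bracket\'e: dans un membre on calcule $\delta''\big(\ell''_2(X_i,X_j)\big)$, tandis que dans l'autre on scinde $X_i$ (ou $X_j$) via $\delta''$ avant d'appliquer $\ell''_2$. La concordance de ces deux expressions est pr\'ecis\'ement le contenu de la relation de compatibilit\'e entre $\ell_2$ et $\delta$ rappel\'ee plus haut, transport\'ee par les d\'ecalages ($\ell_2\rightsquigarrow\ell'_2\rightsquigarrow\ell''_2$ et $\delta\rightsquigarrow\delta''$): les termes $\tau_{23}\circ(\delta\otimes id)$ et $\tau_{12}\circ(id\otimes\delta)$ rendent compte des configurations o\`u les brins $U_s,V_s$ se retrouvent de part et d'autre du crochet.

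Le point d\'elicat sera enti\`erement dans la gestion des signes du point {\bf(ii)}. Traduire la relation \`a quatre termes ci-dessus \`a travers le d\'ecalage $[a-b]$, les facteurs $(-1)^{(a-b)u_s''}$ et $(-1)^{u_s''v_s''+a-b+1}$ qui apparaissent dans $\delta''$, ainsi que les signatures $\varepsilon_{x''}$ de r\'eordonnancement dans $S^+(\mathcal{H}[a-b])$, demande un suivi minutieux; c'est l\`a que se concentre toute la difficult\'e, le reste n'\'etant qu'une v\'erification combinatoire de mise en correspondance des termes.
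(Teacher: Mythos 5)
Votre strat\'egie est la bonne et correspond \`a celle que le texte sous-entend : le papier \'enonce d'ailleurs cette proposition sans d\'emonstration (la construction est reprise de la r\'ef\'erence [A] sur les $(a,b)$-alg\`ebres), et les deux ingr\'edients que vous identifiez --- le fait que $D$ est une cod\'erivation de $\delta$ sur $\mathcal H$, et la relation de compatibilit\'e $\delta\circ\ell_2=(\ell_2\otimes id)\circ\big(\tau_{23}\circ(\delta\otimes id)+id\otimes\delta\big)+(id\otimes\ell_2)\circ\big(\delta\otimes id+\tau_{12}\circ(id\otimes\delta)\big)$ --- sont exactement ceux sur lesquels repose la v\'erification. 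La d\'ecomposition en cas (op\'erateur et scission portant sur des facteurs distincts, donc commutation sign\'ee ; op\'erateur et scission portant sur le m\^eme facteur, donc appel \`a la relation au niveau de $\mathcal H$) est correcte, et vous avez bien vu que les termes en $\tau_{23}\circ(\delta\otimes id)$ et $\tau_{12}\circ(id\otimes\delta)$ rendent compte des configurations o\`u $U_s$ et $V_s$ tombent de part et d'autre du crochet.

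Cela dit, votre texte reste un plan de preuve plut\^ot qu'une preuve : vous reconnaissez vous-m\^eme que toute la difficult\'e se concentre dans la gestion des signes de {\bf(ii)} et vous la laissez enti\`erement de c\^ot\'e. Or le contenu de la proposition est pr\'ecis\'ement une identit\'e de signes : le facteur global $(-1)^{a-b}$, les poids $(-1)^{(a-b)(x_I''+u_s'')}$ et $(-1)^{u_s''v_s''+a-b+1}$ intervenant dans $\delta''$, et les signatures $\varepsilon_{x''}$ de r\'eordonnancement doivent \^etre confront\'es terme \`a terme \`a ceux produits par $m$ et $\ell''$, comme le fait explicitement la preuve analogue donn\'ee plus loin dans le papier pour le coproduit $\kappa$, o\`u les familles de termes $(1.1),\dots,(1.10)$ et $(2.1),\dots,(2.10)$ sont explicit\'ees puis appari\'ees une \`a une. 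Sans cette v\'erification, rien ne garantit que les d\'ecalages successifs ($\ell_2\rightsquigarrow\ell'_2\rightsquigarrow\ell''_2$, $\delta\rightsquigarrow\delta''$) transforment bien la relation \`a quatre termes v\'erifi\'ee par $\ell_2$ en l'identit\'e de cod\'erivation annonc\'ee, avec le bon facteur $(-1)^{a-b}$. Il faut donc au minimum r\'ediger compl\`etement le cas o\`u $\delta''$ scinde l'un des deux facteurs crochet\'es dans {\bf(ii)}, signes compris, pour que l'argument soit recevable.
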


\noindent Alors, le complexe
$\left(S^+(\mathcal{H}[a-b]),\delta'',Q\right)$ est une cog\`ebre
de Lie codiff\'erentielle gradu\'ee, donc, c'est aussi
une $C_\infty$ alg\`ebre.\\

Enfin, le cocrochet $\delta''$ et le coproduit $\Delta$
v\'erifient l'identit\'e de
coLeibniz:$$(id\otimes\Delta)\circ\delta''=(\delta''\otimes
id)\circ\Delta+\tau_{12}''\circ(id\otimes\delta'')\circ\Delta.
$$ Alors,
$\left(S^+\Big(\mathcal{H}[a-b]\Big),\Delta,\delta'',Q\right)$ est
une bicog\`ebre codiff\'erentielle gradu\'ee.

\begin{definition}

\

  Une $(a,b)$-alg\`ebre \`a homotopie pr\`es sur un espace vectoriel gradu\'e $V$ est
d\'efinie par la donn\'ee d'une codiff\'erentielle $Q$, de degr\'e
$1$ et de carr\'e nul sur la bicog\`ebre $$\Big(S^+
\left(\Big(\underline{\displaystyle\bigotimes}^+V[-a+1]\Big)[a-b]\right),\Delta,\delta''\Big).$$\\
En particulier, si $\mathcal{A}$ est une $(a,b)$-alg\`ebre
diff\'erentielle. Alors, la bicog\`ebre colibre et
codiff\'erentielle
$$\left(\mathcal{C}(\mathcal{A})=S^+
\left(\Big(\underline{\displaystyle\bigotimes}^+\mathcal{A}[-a+1]\Big)[a-b]\right),\Delta,\delta'',Q=\ell''+m\right)$$
est la $(a,b)$-alg\`ebre \`a homotopie pr\`es enveloppante de
$\mathcal{A}$.

\end{definition}

\begin{remark}

\

- Dans le cas o\`u $a=0$, $b=-1$ et $\mathcal{A}$ est une
alg\`ebre de Gerstenhaber diff\'erentielle, on retrouve
l'alg\`ebre de Gerstenhaber \`a homotopie pr\`es enveloppante de
$\mathcal{A}$: $$\left(S^+
\Big(\big(\underline{\displaystyle\bigotimes}^+\mathcal{A}[1]\big)[1]\Big),\Delta,\delta'',Q=\ell''+m\right).$$

- Dans le cas o\`u $a=b=0$ et $\mathcal{A}$ est une alg\`ebre de
Poisson diff\'erentielle grdu\'ee, on retrouve le complexe de
l'alg\`ebre de Poisson \`a homotopie pr\`es enveloppante de
$\mathcal{A}$: $$\left(S^+
\Big(\underline{\displaystyle\bigotimes}^+\mathcal{A}[1]\Big),\Delta,\delta'',Q=\ell''+m\right).$$

Cette construction g\'en\'eralise celle des alg\`ebres de
Gerstenhaber et de Poisson \`a homotopie pr\`es.

\end{remark}


\section{Les alg\`ebres pré-Lie et pré-commutatives à homotopie près}

\subsection{Les alg\`ebres pré-Lie à homotopie
près}

\

La notion d'alg\`ebre pr\'e-Lie a \'et\'e \'etudi\'ee par Livernet et Chapoton (\cite{[Liv],[ChL]}).
Une loi pr\'e-Lie est une loi binaire dont l'antisym\'etris\'e est un crochet de Lie. Plus pr\'ecis\'ement :\\

\begin{definition}

\

Une alg\`ebre pré-Lie (\`a droite) gradu\'ee $(V,\diamond)$ est un
espace gradu\'e $V$ muni d'un produit $\diamond$ de degré $0$
vérifiant :
$$
\forall x,y,z\in V,~~(x\diamond y)\diamond z-x\diamond (y\diamond
z)=(-1)^{|y||z|}\big((x\diamond z)\diamond y -x\diamond(z\diamond
y)\big).
$$

Si de plus, $d:V\longrightarrow V$ est une diff\'erentielle de
degré $1$ telle que
$$
d(x\diamond y)=dx\diamond y+(-1)^{1.|x|}x\diamond dy,
$$
on dira que $(V,\diamond,d)$ est une alg\`ebre pré-Lie diff\'erentielle gradu\'ee.\\
\end{definition}

Cette structure est associ\'ee \`a une op\'erade quadratique,
l'op\'erade $preLie$. L'op\'erade duale, d\'etermin\'ee par
\cite{[ChL]}, est l'op\'erade permutative : $preLie^!=Perm$.

Rappelons qu'une alg\`ebre permutative (\`a droite) $(V,.)$ est un
espace gradu\'e $V$ muni d'un produit . de degré $0$, vérifiant :
$$
\forall x,y,z\in V,~~x. (y. z)=(-1)^{|y||z|}x.(z.y)=(x.y).z.
$$

\begin{definition}

\

Une $preLie^!$-cog\`ebre (ou cogèbre permutative) est un espace
vectoriel gradué $\mathcal{C}$ muni d'une comultiplication
$\Delta:\mathcal{C}\longrightarrow\mathcal{C}\otimes\mathcal{C}$
de degré $0$ vérifiant :
$$
(id\otimes \Delta)\circ \Delta= \tau_{23}\circ(id\otimes
\Delta)\circ \Delta=(\Delta\otimes id)\circ \Delta.
$$
\end{definition}

\begin{proposition} {\rm \cite{[ChL]}}

\

Si $V$ un espace vectoriel gradué. Alors la cog\`ebre permutative
colibre associ\'ee \`a $V[1]$ est $\Big(V[1]\otimes
S(V[1]),\Delta\Big)$ o\`u $\Delta$ est d\'efini par $\Delta
(x\otimes 1)=0$ et :
$$
\Delta(x_0\otimes x_1\dots x_n)=
\displaystyle\sum_{\begin{smallmatrix} 0\leq k\leq n-1 \\
\sigma\in
Sh_{k,1,n-k-1}\end{smallmatrix}}\varepsilon_x(\sigma)x_0\otimes(x_{\sigma(1)}\dots
x_{\sigma(k)})\bigotimes
x_{\sigma(k+1)}\otimes(x_{\sigma(k+2)}\dots x_{\sigma(n)}).
$$
(Ici, $Sh_{k,1,n-k-1}$ est l'ensemble des permutations $\sigma$ de $S_n$ telles que $\sigma(1)<\dots<\sigma(k)$ et $\sigma(k+2)<\dots<\sigma(n)$).\\
\end{proposition}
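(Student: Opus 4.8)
La preuve se d\'ecompose en deux \'etapes: v\'erifier que $\Delta$ fait de $V[1]\otimes S(V[1])$ une cog\`ebre permutative, puis \'etablir la propri\'et\'e universelle de colibert\'e. Le point de d\'epart est l'observation que cet \'enonc\'e est dual de la description de l'alg\`ebre permutative \emph{libre}: sur un espace gradu\'e $W$, celle-ci est $W\otimes S(W)$, munie du produit $(x_0\otimes X)\cdot(y_0\otimes Y)=x_0\otimes\big(X\cdot y_0\cdot Y\big)$, le produit \`a droite \'etant celui de $S(W)$. Autrement dit, le produit conserve la t\^ete $x_0$ du premier facteur et fusionne sym\'etriquement tout le reste; c'est pr\'ecis\'ement ce qu'exprime la r\`egle $x.(y.z)=(-1)^{|y||z|}x.(z.y)=(x.y).z$. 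La comultiplication $\Delta$ de l'\'enonc\'e est la transpos\'ee de ce produit: d\'ecomposer $x_0\otimes x_1\dots x_n$ revient \`a choisir un \'el\'ement distingu\'e $x_{\sigma(k+1)}$ destin\'e \`a devenir la t\^ete du second facteur, \`a garder $x_0$ comme t\^ete du premier, et \`a r\'epartir les $n-1$ \'el\'ements restants en deux sacs sym\'etriques de tailles $k$ et $n-k-1$, ce qui est exactement la somme index\'ee par $Sh_{k,1,n-k-1}$.

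J'\'etablirais d'abord les axiomes copermutatifs. On applique $(id\otimes\Delta)\circ\Delta$ et $(\Delta\otimes id)\circ\Delta$ au g\'en\'erateur $x_0\otimes x_1\dots x_n$ et on compare terme \`a terme. Dans les deux cas le r\'esultat est une somme index\'ee par le choix de deux \'el\'ements distingu\'es de $\{x_1,\dots,x_n\}$ (les futures t\^etes des deuxi\`eme et troisi\`eme facteurs) et par une r\'epartition des \'el\'ements restants en trois sacs sym\'etriques, $x_0$ demeurant la t\^ete du premier facteur; la co\"incidence des deux expressions traduit l'associativit\'e du produit permutatif libre. L'identit\'e avec le terme tordu $\tau_{23}\circ(id\otimes\Delta)\circ\Delta$ correspond, quant \`a elle, \`a l'\'echange des deux t\^etes distingu\'ees, c'est-\`a-dire \`a la relation de commutativit\'e \`a droite $x.(y.z)=(-1)^{|y||z|}x.(z.y)$; il faut ici suivre avec soin les signes $\varepsilon_x(\sigma)$ engendr\'es par le r\'eordonnancement des battements.

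Pour la colibert\'e, je construirais, \`a partir d'une cog\`ebre permutative $(\mathcal{C},\Delta_\mathcal{C})$ et d'une application lin\'eaire $f:\mathcal{C}\longrightarrow V[1]$ de degr\'e $0$, l'unique morphisme de cog\`ebres $\widetilde f:\mathcal{C}\longrightarrow V[1]\otimes S(V[1])$ relevant $f$, c'est-\`a-dire v\'erifiant $\pi\circ\widetilde f=f$, o\`u $\pi$ d\'esigne la projection sur les cog\'en\'erateurs $V[1]$. On pose $\widetilde f=\sum_{n\geq0}\big(f\otimes f^{\cdot n}\big)\circ\Delta_\mathcal{C}^{(n)}$, o\`u $\Delta_\mathcal{C}^{(n)}$ est la comultiplication it\'er\'ee $n$ fois et $f^{\cdot n}$ prend ses valeurs dans $S^n(V[1])$. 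L'axiome permutatif $(id\otimes\Delta_\mathcal{C})\circ\Delta_\mathcal{C}=\tau_{23}\circ(id\otimes\Delta_\mathcal{C})\circ\Delta_\mathcal{C}$ garantit que $\Delta_\mathcal{C}^{(n)}$ est invariante par permutation des $n$ derniers facteurs, donc que $\widetilde f$ est bien d\'efinie \`a valeurs dans la partie sym\'etrique $S(V[1])$; on v\'erifie ensuite que $\widetilde f$ commute aux comultiplications. L'unicit\'e r\'esulte de ce que tout rel\`evement doit redonner ces composantes en composant $\Delta_\mathcal{C}^{(n)}$ avec $\pi$.

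Le point d\'elicat sera double. C\^ot\'e v\'erification des axiomes, c'est la comptabilit\'e des signes $\varepsilon_x(\sigma)$ lors de la r\'eorganisation des sommes de battements qui demande le plus d'attention, notamment pour faire appara\^itre correctement la volte $\tau_{23}$. C\^ot\'e colibert\'e, l'obstacle principal est de montrer rigoureusement que l'axiome copermutatif force la partie droite de $\Delta_\mathcal{C}^{(n)}$ \`a se factoriser par l'alg\`ebre sym\'etrique $S(V[1])$ plut\^ot que par l'alg\`ebre tensorielle: c'est exactement la relation $(id\otimes\Delta)\circ\Delta=\tau_{23}\circ(id\otimes\Delta)\circ\Delta$ qui assure cette sym\'etrie, et c'est elle qui rend $\widetilde f$ \`a la fois bien d\'efinie et unique.
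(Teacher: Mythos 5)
Le texte ne d\'emontre pas cette proposition~: il la cite telle quelle d'apr\`es Chapoton et Livernet \cite{[ChL]}, et se contente ensuite, dans la remarque qui suit, d'identifier $S^{n+1}(V[1])$ \`a un sous-espace de $V[1]\otimes S^{n}(V[1])$ pour r\'e\'ecrire $\Delta$. Il n'y a donc pas de preuve interne \`a laquelle comparer la v\^otre. Votre sch\'ema est n\'eanmoins le bon et correspond \`a l'argument standard de \cite{[ChL]}~: dualiser la description de l'alg\`ebre permutative libre $W\otimes S(W)$ (en arit\'e $n$, $\operatorname{Perm}(n)\otimes_{S_n}W^{\otimes n}=W\otimes S^{n-1}(W)$, avec le produit qui conserve la t\^ete du premier facteur), v\'erifier les identit\'es copermutatives terme \`a terme sur les battements, puis \'etablir la propri\'et\'e universelle via $\widetilde f=\sum_{n}(f\otimes f^{\cdot n})\circ\Delta_{\mathcal C}^{(n)}$. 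Deux points restent toutefois \`a consolider. D'abord, la colibert\'e ainsi obtenue ne vaut que dans la cat\'egorie des cog\`ebres permutatives conilpotentes (connexes)~: sans cette hypoth\`ese, la somme d\'efinissant $\widetilde f$ n'a aucune raison d'\^etre localement finie~; c'est l'abus usuel de la litt\'erature op\'eradique, mais il faut l'expliciter, d'autant que c'est dans cette cat\'egorie restreinte que l'\'enonc\'e est utilis\'e ensuite pour d\'efinir les $preL_\infty$ alg\`ebres. Ensuite, la v\'erification effective des trois \'egalit\'es $(id\otimes\Delta)\circ\Delta=\tau_{23}\circ(id\otimes\Delta)\circ\Delta=(\Delta\otimes id)\circ\Delta$, avec le suivi des signes $\varepsilon_x(\sigma)$ lors du r\'earrangement des battements $Sh_{k,1,n-k-1}$, est annonc\'ee mais non effectu\'ee~: c'est l\`a que r\'eside tout le contenu calculatoire de la proposition, et votre texte le reconna\^it sans le mener \`a terme.
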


\begin{remark}

\

Identifions $S^{n+1}(V[1])$ avec un sous espace de $V[1]\otimes
S^n(V[1])$ en posant
$$
x_0\dots x_n=\sum_{\sigma\in S_{n+1}}
\varepsilon_x(\sigma^{-1})x_{\sigma(0)}\otimes x_{\sigma(1)}\dots
x_{\sigma(n)}
$$
On a alors :
\begin{align*}
\Delta (x_0\otimes x_1\dots x_n)&=\sum_{\begin{smallmatrix} I\cup
J=\{1,\dots,n\}\\ J\neq\emptyset
\end{smallmatrix}}\varepsilon_{x}(\begin{smallmatrix} x_1&\dots&x_n\\ x_I&&x_J\end{smallmatrix})(x_0\otimes x_I)\bigotimes x_J\cr
&=x_0\bigotimes x_1\dots x_n+x_0\otimes\Delta'(x_1\dots x_n)
\end{align*}
où $\Delta'(x_1\dots x_n)$ est le coproduit de la cogèbre
cocommutative colibre $S^{+}(V[1])$.

Il est alors clair que $\Delta(x_0\dots x_n)=\Delta'(x_0\dots x_n)$.\\
\end{remark}

\begin{definition}

\

Une $preL_\infty$ alg\`ebre est une cog\`ebre permutative codifférentielle $\Big(V[1]\otimes S(V[1]),\Delta, Q\Big)$ telle que $Q$ est une cod\'erivation de $\Delta$ de degr\'e $1$ et $Q^2=0$.\\
\end{definition}

\begin{proposition} {\rm \cite{[ChL]}}

\

Soit \ $(V, \diamond,d)$ une algèbre pré-Lie diff\'erentielle
gradu\'ee. On pose \vskip0.15cm
$$
Q_1(x)=dx,~~ Q_2(x_1\otimes x_2)=(-1)^{x_1}x_1\diamond
x_2,~~Q_k=0, \forall k\geq3
$$
et
$$\aligned
Q(x_0\otimes x_1\dots x_n)&=Q_1(x_0)\otimes x_1\dots x_n+(-1)^{x_0}\sum_{k=1}^{n}(-1)^{ \sum_{i<k}x_i}x_0\otimes x_1\dots Q_1(x_k)\dots x_n+\\
&\hskip 1cm+\sum_{\sigma\in
Sh_{1,n-1}}\varepsilon_x(\sigma)Q_2(x_0\otimes x_{\sigma(1)})\otimes x_{\sigma(2)}\dots x_{\sigma(n)}+\\
&\hskip 1cm+(-1)^{x_0}\hskip-0.5cm\sum_{\sigma\in
Sh_{1,1,n-2}}\varepsilon_x(\sigma) x_0\otimes
Q_2(x_{\sigma(1)}\otimes x_{\sigma(2)}).x_{\sigma(3)}\dots
x_{\sigma(n)}.
\endaligned
$$

Alors, $\Big(V[1]\otimes S(V[1]),\Delta,Q\Big)$ est une $preL_\infty$ alg\`ebre dite la $preL_\infty$ alg\`ebre enveloppante de $(V,\diamond,d)$.\\
\end{proposition}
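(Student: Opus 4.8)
The plan is to verify the two defining conditions of a $preL_\infty$ algebra from the Definition above: that $Q$ is a coderivation of $\Delta$ of degree $1$, and that $Q\circ Q=0$. Throughout I would exploit the fact, recorded in the preceding Proposition and Remark, that $\big(V[1]\otimes S(V[1]),\Delta\big)$ is the cofree permutative coalgebra on the cogenerators $V[1]$, with projection $\pi:V[1]\otimes S(V[1])\to V[1]$ sending $x_0\otimes 1\mapsto x_0$ and annihilating everything with a nonempty symmetric part. The key structural principle is that a coderivation of a cofree coalgebra is completely determined by its corestriction $\pi\circ Q$, and conversely any collection of maps into the cogenerators extends uniquely to a coderivation.

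First I would check that the displayed formula for $Q$ is exactly the unique coderivation whose corestriction is $Q_1+Q_2$, with $Q_k=0$ for $k\geq 3$. Concretely, the four groups of terms correspond to: $Q_1$ applied in the distinguished slot $x_0$; $Q_1$ applied in each symmetric slot, with the Koszul sign $(-1)^{x_0}(-1)^{\sum_{i<k}x_i}$ recording the passage past $x_0$ and the preceding factors; $Q_2$ merging $x_0$ with one symmetric factor, keeping the output in the distinguished slot (the sum over $Sh_{1,n-1}$); and $Q_2$ merging two symmetric factors inside the symmetric part (the sum over $Sh_{1,1,n-2}$). Matching this against the explicit coproduct $\Delta$ from the Proposition above, the identity $(Q\otimes id+id\otimes Q)\circ\Delta=\Delta\circ Q$ reduces to a bookkeeping check that the shuffle signs $\varepsilon_x(\sigma)$ produced by splitting and then acting agree with those produced by acting and then splitting; this is routine and uses only that $\pi\circ Q$ equals $Q_1+Q_2$.

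The substance is the equation $Q^2=0$. Since $Q$ has degree $1$, the graded commutator gives $[Q,Q]=2Q^2$, and the commutator of coderivations is again a coderivation; hence $Q^2$ is a coderivation of degree $2$, and by cofreeness it vanishes if and only if its corestriction $\pi\circ Q^2$ does. Because $Q_k=0$ for $k\geq 3$, the map $\pi\circ Q^2$ has components only in arities $1$, $2$ and $3$, which I would treat separately. The arity-$1$ component is $Q_1\circ Q_1=d\circ d=0$. The arity-$2$ component collects the terms in which one $Q_1$ and one $Q_2$ are composed; it reproduces, up to the sign convention in $Q_2(x_1\otimes x_2)=(-1)^{x_1}x_1\diamond x_2$, precisely the differential Leibniz rule $d(x\diamond y)=dx\diamond y+(-1)^{|x|}x\diamond dy$ imposed in the definition of a differential pre-Lie algebra, and so vanishes.

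The arity-$3$ component is the heart of the argument and the step I expect to be the main obstacle. Here both compositions are $Q_2\circ Q_2$, and the output is forced back into the distinguished slot, so on $x_0\otimes x_1x_2$ the only surviving terms are those where one $Q_2$ acts in the distinguished slot and the other either feeds a symmetric factor into that slot or first merges the two symmetric factors. After projection these assemble into an alternating combination of $(x_0\diamond x_i)\diamond x_j$ and $x_0\diamond(x_i\diamond x_j)$ for $\{i,j\}=\{1,2\}$. Because $x_1$ and $x_2$ sit in a symmetric product, the two orderings are summed with the Koszul sign $(-1)^{x_1x_2}$, and the resulting expression is exactly the pre-Lie associator rendered symmetric in its last two arguments, namely
$$\big((x_0\diamond x_1)\diamond x_2-x_0\diamond(x_1\diamond x_2)\big)-(-1)^{|x_1||x_2|}\big((x_0\diamond x_2)\diamond x_1-x_0\diamond(x_2\diamond x_1)\big),$$
which vanishes identically by the defining pre-Lie relation. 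The delicate point, and the one I would write out in full, is the reconciliation of the Koszul and shuffle signs carried by $\varepsilon_x(\sigma)$, by the factors $(-1)^{x_0}$ and $(-1)^{x_1}$ in the definitions of $Q$ and $Q_2$, and by the symmetrization in $S(V[1])$: it is precisely the permutative symmetry of the cooperad $preLie^!=Perm$ that matches the exchange symmetry of the associator required by the pre-Lie axiom. Once all three arity components vanish, $\pi\circ Q^2=0$, hence $Q^2=0$, and $\big(V[1]\otimes S(V[1]),\Delta,Q\big)$ is a $preL_\infty$ algebra.
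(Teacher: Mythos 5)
The paper itself gives no proof of this proposition: it is quoted from Chapoton--Livernet (the reference \cite{[ChL]}), so there is no in-text argument to compare yours against. That said, your plan is the standard and correct route, and almost certainly the one in the cited source: use that $\big(V[1]\otimes S(V[1]),\Delta\big)$ is the cofree (conilpotent) permutative coalgebra on $V[1]$, so that coderivations are determined by their corestriction $\pi\circ Q$, observe that $Q^2=\frac12[Q,Q]$ is again a coderivation, and reduce $Q^2=0$ to the vanishing of the arity $1$, $2$ and $3$ components of $\pi\circ Q^2$, which are respectively $d^2=0$, the Leibniz rule for $d$ with respect to $\diamond$, and the graded pre-Lie identity. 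Your identification of the four blocks of the displayed formula with the four ways of inserting $Q_1$ or $Q_2$ relative to the distinguished slot is also right. The only genuinely unfinished piece is the one you flag yourself: the arity-$2$ and arity-$3$ sign verifications. Note in particular that the naive shuffle sign in $S(V[1])$ is $(-1)^{x_1x_2}=(-1)^{(|x_1|-1)(|x_2|-1)}$, which does \emph{not} by itself match the sign $(-1)^{|x_1||x_2|}$ in the pre-Lie axiom; it is only after combining it with the d\'ecalage factors $(-1)^{x_0}$ and $(-1)^{x_1}$ built into $Q$ and $Q_2$ that the symmetric sum over the two orderings of $x_1,x_2$ collapses to the associator relation you display. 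A complete write-up would have to carry out that cancellation explicitly, but the strategy is sound and nothing in it would fail.
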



\subsection{Les alg\`ebres pr\'e-commutative (ou de Zinbiel) à homotopie près}

\

Une loi d'alg\`ebre pr\'e-commutative, ou d'alg\`ebre de Zinbiel, est une loi binaire dont le sym\'etris\'e est
 une loi commutative et associative. Plus pr\'ecis\'ement :\\

\begin{definition} {\rm \cite{[L1],[Liv]}}

\

On dit que $(V,\curlywedge,d)$ est une algèbre de Zinbiel (ou
pr\'e-commutative) \`a droite, différentielle et graduée si $V$
est un espace gradu\'e muni d'un produit $\curlywedge$ de degré
$0$ et d'une différentielle $d$ de degré $1$ vérifiant :
\begin{itemize}
\item $\forall x,y,z\in V$,~~$(x\curlywedge y)\curlywedge z=x\curlywedge (y\curlywedge z)+(-1)^{|y||z|}x\curlywedge (z\curlywedge y)$,
\item $\forall x,y\in V$,~~$d(x\curlywedge y)=dx\curlywedge  y+(-1)^{|x|}x\curlywedge dy$.\\
\end{itemize}
\end{definition}

Cette structure est associ\'ee \`a une op\'erade quadratique,
l'op\'erade $Zinb$. L'op\'erade duale, d\'etermin\'ee par
\cite{[L1],[Liv]}, est l'op\'erade Leibniz : $Zinb^!=Leib$.

Rappelons qu'une alg\`ebre de Leibniz (\`a droite) $(V,[~,~])$ est
un espace gradu\'e $V$ muni d'un crochet $[~,~]$ de degré $0$
vérifiant :
$$
\forall x,y,z\in V,~~[[x, y],z]=[x,[y, z]]+(-1)^{|y||z|}[[x,
z],y].
$$

\begin{definition} {\rm \cite{[Liv]}}

\

Une $Zinb^!$-cog\`ebre ou cogèbre de Leibniz est un espace
vectoriel gradué $\mathcal{C}$ muni d'une comultiplication
$\delta:\mathcal{C}\longrightarrow\mathcal{C}\otimes\mathcal{C}$
de degré $0$ vérifiant :
$$
(id\otimes \delta)\circ \delta=\Big(\delta\otimes
id-\tau_{23}\circ(\delta\otimes id)\Big)\circ \delta.
$$
\end{definition}

La cog\`ebre de Leibniz colibre engendr\'ee par $V[1]$ est donn\'ee par :\\

\begin{proposition} \cite{[Liv]}

\

Soit $V$ un espace vectoriel gradué. Alors la cogèbre de Leibniz
colibre graduée engendr\'ee par $V[1]$ est $(T^+(V[1]),\delta)$ où
$\delta$ est d\'efini par :
$$
\delta(x_1\otimes \dots \otimes x_n)=\sum_{1\leq k\leq
n-1}(x_1\otimes \dots \otimes
x_{k})\bigotimes\mu_{n-k}(x_{k+1}\otimes \dots \otimes x_{n}),
$$
les $\mu_j$ sont d\'efinis par r\'ecurrence ainsi : $\mu_1=id$,
et, si $\tau_n$ est le cycle $(1,\dots,n)$ de $S_n$,
$$
\mu_{n+1}=\mu_n\otimes id-(\mu_n\otimes id)\circ\tau_{n+1}^{-1}.
$$
(Comme pour la volte, l'action des $\mu_j$ sur les produits tensoriels est sign\'ee).\\
\end{proposition}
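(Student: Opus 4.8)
The plan is to prove the statement in two stages. First I would check that $(T^+(V[1]),\delta)$ is genuinely a Leibniz coalgebra, i.e.\ that $\delta$ satisfies the co-Leibniz identity
\begin{align*}
(id\otimes\delta)\circ\delta=\big(\delta\otimes id-\tau_{23}\circ(\delta\otimes id)\big)\circ\delta.
\end{align*}
Second, I would establish the universal property of the cofree object: that the projection $\pi:T^+(V[1])\to V[1]$ onto the first tensor degree is couniversal among (conilpotent) Leibniz coalgebras equipped with a degree-$0$ map to $V[1]$. Throughout, the operators $\mu_j$ play the role of absorbing the failure of coassociativity: the comultiplication $\delta$ applies a $\mu$ only to its right-hand factor, and it is exactly this asymmetry that is matched by the $\tau_{23}$-correction in the axiom.

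For the first stage I would evaluate both sides on a word $w=x_1\otimes\cdots\otimes x_n$, writing $w_{[i,j]}=x_i\otimes\cdots\otimes x_j$. Unfolding the definitions gives
\begin{align*}
(\delta\otimes id)\circ\delta(w)=\sum_{1\le l<k\le n-1}w_{[1,l]}\otimes\mu_{k-l}(w_{[l+1,k]})\otimes\mu_{n-k}(w_{[k+1,n]}),
\end{align*}
and the $\tau_{23}$-term transposes the two rightmost factors with the Koszul sign; on the left side, $(id\otimes\delta)\circ\delta(w)=\sum_{k}w_{[1,k]}\otimes\delta\big(\mu_{n-k}(w_{[k+1,n]})\big)$. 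Collecting the terms of both sides according to the length $m$ of their \emph{common} left factor $w_{[1,m]}$ (note that $id\otimes\delta$, $\delta\otimes id$ and $\tau_{23}$ all preserve this leftmost block), the whole identity reduces, after stripping $w_{[1,m]}$ and relabelling $u=w_{[m+1,n]}$ of length $p=n-m$, to the single key relation
\begin{align*}
\delta\big(\mu_p(u)\big)=\sum_{j=1}^{p-1}\Big(\mu_j(u_{[1,j]})\otimes\mu_{p-j}(u_{[j+1,p]})-(\pm)\,\mu_{p-j}(u_{[j+1,p]})\otimes\mu_j(u_{[1,j]})\Big),
\end{align*}
where $(\pm)$ is the Koszul sign of the graded transposition. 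I expect to prove this relation by induction on $p$, feeding in the recursion $\mu_{p}=\mu_{p-1}\otimes id-(\mu_{p-1}\otimes id)\circ\tau_{p}^{-1}$: expanding $\mu_p(u)$ and applying $\delta$ termwise should reproduce precisely the antisymmetrised right-hand side, the cyclic permutation $\tau_p^{-1}$ producing the swap.

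For the second stage I would fix a conilpotent Leibniz coalgebra $(\mathcal C,\delta_{\mathcal C})$ and a degree-$0$ map $f:\mathcal C\to V[1]$, and construct the unique lift $F:\mathcal C\to T^+(V[1])$ with $\pi\circ F=f$. I would set $F=\sum_{n\ge1}F_n$ with $F_n=f^{\otimes n}\circ\delta_{\mathcal C}^{(n)}$, where $\delta_{\mathcal C}^{(n)}=(\delta_{\mathcal C}\otimes id^{\otimes(n-2)})\circ\cdots\circ\delta_{\mathcal C}$ is the left-normalised iterate. A short check on low degrees shows this is forced: applying $\pi^{\otimes n}$ to the left-normalised iterate of the target comultiplication recovers each homogeneous component, and the $\mu_j$ in $\delta$ are unwound by the vanishing $\delta(y)=0$ on single letters, so that $F_n$ is indeed $f^{\otimes n}\circ\delta_{\mathcal C}^{(n)}$ with no further correction. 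Conilpotency makes the sum locally finite and guarantees uniqueness, since any coalgebra morphism is determined by its corestriction $\pi\circ F=f$. The substantive point is to verify $\delta\circ F=(F\otimes F)\circ\delta_{\mathcal C}$, where the co-Leibniz identity in $\mathcal C$ is used to rewrite the iterates $\delta_{\mathcal C}^{(n)}$ and the key relation $(\star)$ shows that the two sides match degree by degree.

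The main obstacle is the sign bookkeeping, both in establishing $(\star)$ and in the morphism verification: every transposition, every cyclic permutation $\tau_j^{-1}$, and every application of a $\mu_j$ carries a Koszul sign depending on the degrees $|x_i|$, and the inductive step must propagate these correctly through the recursion for $\mu_p$. Once $(\star)$ is secured, both the co-Leibniz identity and the cofreeness follow formally, with the non-coassociativity of $\delta$ entirely encoded in the operators $\mu_j$.
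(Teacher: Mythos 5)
The paper does not prove this proposition: it is stated as a recalled result, attributed to Livernet \cite{[Liv]}, and used as a black box (the only neighbouring verification even mentioned is the lemma $\mu_{p+q}\circ sh_{p,q}=0$, itself left to the reader). So your proposal can only be measured against the standard proof, and as an outline it is correct and takes the natural direct route. The reduction of the co-Leibniz identity to your key relation $(\star)$ is legitimate, because in each of the three composites the leftmost tensor factor is an untouched initial segment $x_1\otimes\dots\otimes x_m$ of the word, so the identity really does split along the length of that segment; and $(\star)$ itself is true (it checks out explicitly for $p=2,3$, and the recursion $\mu_p=\mu_{p-1}\otimes id-(\mu_{p-1}\otimes id)\circ\tau_p^{-1}$ drives the induction, the cyclic shift producing exactly the $\tau$-flipped half of the right-hand side). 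Your cofreeness argument is also the standard one: $\pi^{\otimes n}\circ\delta^{(n)}$ is the projection onto $\bigotimes^n V[1]$, which forces $F_n=f^{\otimes n}\circ\delta_{\mathcal C}^{(n)}$ and gives uniqueness, and conilpotency is indeed what makes the sum locally finite. What remains undone is exactly what you flag: the sign-correct induction for $(\star)$ and the check that $F$ intertwines $\delta_{\mathcal C}$ with $\delta$ --- neither hides an obstruction, but both are where all the work is. For contrast, the route in \cite{[Liv]} is operadic: $Zinb^!=Leib$ together with Loday's description of the free Zinbiel algebra as the tensor module under the half-shuffle product yields the cofree Leibniz coalgebra by dualisation, the $\mu_j$ arising as duals of the half-shuffles (which is also why the paper records $\mu_{p+q}\circ sh_{p,q}=0$ immediately afterwards). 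That route replaces your sign bookkeeping by Koszul-duality machinery; your direct verification keeps everything elementary at the cost of the combinatorics you describe.
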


On peut montrer par r\'ecurrence :

\begin{lem}

\

Pour tout $p$, $q$ positif,
$$
\mu_{p+q}\circ sh_{p,q}=0.
$$
\end{lem}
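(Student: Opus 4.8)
The plan is to prove $\mu_{p+q}\circ sh_{p,q}=0$ by induction on $n=p+q$, for $p,q\geq 1$. For the base case $n=2$ (so $p=q=1$) one checks directly that $\mu_2=id-\tau_2^{-1}$ is the signed antisymmetriser while $sh_{1,1}$ is the signed symmetriser, so that $\mu_2(sh_{1,1}(x_1,x_2))=\mu_2\big(x_1\otimes x_2+(-1)^{|x_1||x_2|}x_2\otimes x_1\big)=0$. The engine of the induction is the observation that the defining recursion factors as
\[
\mu_n=(\mu_{n-1}\otimes id)\circ(id-\tau_n^{-1}),
\]
where $\tau_n^{-1}$ moves the first tensor slot to the last one (with the corresponding Koszul sign). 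Thus I would compute $\mu_n\circ sh_{p,q}$ by first evaluating $(id-\tau_n^{-1})\circ sh_{p,q}$ and then applying $\mu_{n-1}\otimes id$, aiming to reduce everything to shorter shuffles annihilated by the induction hypothesis.

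First I would record the two elementary recursions for the shuffle product, obtained by classifying $(p,q)$-shuffles according to their last, resp. first, letter. Writing $u=u_1\otimes\cdots\otimes u_p$, $v=v_1\otimes\cdots\otimes v_q$, $u'=u_1\otimes\cdots\otimes u_{p-1}$, $u''=u_2\otimes\cdots\otimes u_p$, and similarly $v',v''$, these read
\[
sh_{p,q}(u,v)=(-1)^{|u_p||v|}\,sh_{p-1,q}(u',v)\otimes u_p+sh_{p,q-1}(u,v')\otimes v_q
\]
and
\[
sh_{p,q}(u,v)=u_1\otimes sh_{p-1,q}(u'',v)+(-1)^{|v_1||u|}v_1\otimes sh_{p,q-1}(u,v'').
\]
Applying $\tau_n^{-1}$ to the second form and simplifying the signs gives
\[
\tau_n^{-1}\big(sh_{p,q}(u,v)\big)=(-1)^{|u_1|(|u''|+|v|)}sh_{p-1,q}(u'',v)\otimes u_1+(-1)^{|v_1||v''|}sh_{p,q-1}(u,v'')\otimes v_1.
\]
The key structural point is that every summand of $(id-\tau_n^{-1})(sh_{p,q}(u,v))$ is now of the shape (a length-$(n-1)$ shuffle)$\otimes$(a single letter), so after applying $\mu_{n-1}\otimes id$ the operator $\mu_{n-1}$ always lands on a shuffle $sh_{p',q'}$ with $p'+q'=n-1$.

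When $p\geq 2$ and $q\geq 2$, all four of these shuffles have both blocks nonempty, so the induction hypothesis $\mu_{n-1}\circ sh_{p',q'}=0$ kills every resulting term and we are done. The only obstacle, and the delicate point of the argument, is the boundary: if $p=1$ then $sh_{p-1,q}=sh_{0,q}$ degenerates to the identity on $v$, on which $\mu_{n-1}$ need not vanish, so the induction hypothesis fails on the two terms built from the first blocks. The resolution I would carry out is that these two terms cancel already at the level of $id-\tau_n^{-1}$, before $\mu_{n-1}$ is applied: with $u=u_1$, $u',u''$ empty and $|u''|=0$, the first summand of the right recursion equals $(-1)^{|u_1||v|}v\otimes u_1$, which is exactly the first summand of $\tau_n^{-1}(sh_{1,q}(u_1,v))$, so the $id$ and $-\tau_n^{-1}$ contributions annihilate each other. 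What survives is $sh_{1,q-1}(u_1,v')\otimes v_q-(-1)^{|v_1||v''|}sh_{1,q-1}(u_1,v'')\otimes v_1$, and since $q\geq 2$ these are genuine $sh_{1,q-1}$-shuffles killed by $\mu_q\otimes id$ via the induction hypothesis. The symmetric cancellation disposes of the case $q=1$, $p\geq 2$, while $p=q=1$ is the base case; this closes the induction.
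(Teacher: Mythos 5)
Your proof is correct, and it supplies exactly the induction that the paper merely asserts (the lemma is introduced by \og On peut montrer par r\'ecurrence \fg{} with no further detail). The two shuffle recursions by first and last letter, the factorisation $\mu_n=(\mu_{n-1}\otimes id)\circ(id-\tau_n^{-1})$, and the explicit cancellation of the two degenerate $sh_{0,q}$-terms when $p=1$ (and symmetrically when $q=1$) are precisely the points one must check, and your signs work out; the only caveat is that the sign and the direction in which $\tau_n^{-1}$ rotates the tensor factors depend on the convention for the (signed) action of $S_n$ on $V^{\otimes n}$, which the paper leaves implicit, but under the opposite convention a mirror-image of your argument goes through unchanged.
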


\begin{definition}

\

Une structure de $Z_\infty$ alg\`ebre est la donn\'ee d'une
cog\`ebre de Leibniz codifférentielle $\Big(T^+(V[1]),\delta,
Q\Big)$ telle que $Q$
est une cod\'erivation de $\delta$ de degr\'e $1$ et de carr\'e nul.\\
\end{definition}

\begin{proposition} \cite{[Liv]}

\

Soit $(V,\curlywedge,d)$ une algèbre de Zinbiel diff\'erentielle
gradu\'ee. On pose
$$
Q_1(x)=dx,~Q_2(x\otimes y)=(-1)^{x}x\curlywedge y,~ Q_k=0,~\forall
k\geq3,
$$
et
$$\aligned
Q(x_1\otimes \dots \otimes x_n)&=\sum_{k=1}^{n}(-1)^{
\sum_{i<k}x_i}x_1\otimes...\otimes
x_{k-1}\otimes Q_1(x_k)\otimes x_{k+1}\otimes\dots\otimes x_n +\\
&+Q_2(x_1\otimes x_2)\otimes x_3\otimes\dots \otimes x_n+\\
&+\sum_{k=2}^{n-1}(-1)^{\sum_{i<k}x_i}x_1\otimes\dots\otimes
Q_2\circ \mu_2(x_k\otimes x_{k+1})\otimes\dots\otimes x_n.
\endaligned
$$
Alors, $\Big(T^+(V[1]),\delta, Q\Big)$ est une $Z_\infty$
alg\`ebre appel\'ee la $Z_\infty$ alg\`ebre enveloppante de $(V,\curlywedge,d)$.\\
\end{proposition}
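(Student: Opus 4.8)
Le plan est de v\'erifier successivement les trois conditions d\'efinissant une $Z_\infty$ alg\`ebre: que $Q$ est de degr\'e $1$, que $Q$ est une cod\'erivation de $\delta$, et que $Q^2=0$. Le degr\'e est imm\'ediat: $Q_1=d$ \'el\`eve le degr\'e de $1$ par hypoth\`ese, et comme $\curlywedge$ est de degr\'e $0$ sur $V$, le d\'ecalage fait de $Q_2(x\otimes y)=(-1)^{x}x\curlywedge y$ une application $V[1]\otimes V[1]\rightarrow V[1]$ de degr\'e $+1$ (le degr\'e interne $|x|+|y|$ est inchang\'e, tandis que le nombre de d\'ecalages passe de deux \`a un); chaque terme de la formule d\'efinissant $Q$ est donc homog\`ene de degr\'e $1$, d'o\`u $|Q|=1$.

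Pour la propri\'et\'e de cod\'erivation, j'invoquerais le caract\`ere colibre de la cog\`ebre de Leibniz $(T^+(V[1]),\delta)$ (\cite{[Liv]}): une cod\'erivation de $\delta$ est uniquement d\'etermin\'ee par sa corestriction $p\circ Q:T^+(V[1])\rightarrow V[1]$ \`a l'espace des cog\'en\'erateurs, et r\'eciproquement toute telle corestriction se prolonge en une unique cod\'erivation. Or la corestriction du $Q$ \'ecrit ci-dessus est exactement $Q_1+Q_2$, les seuls termes \`a valeurs dans $\bigotimes^1 V[1]$ \'etant $Q_1(x_1)$ pour $n=1$ et $Q_2(x_1\otimes x_2)$ pour $n=2$. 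Il suffit donc de constater que la formule affich\'ee est le prolongement canonique de $Q_1+Q_2$: la fa\c{c}on dissym\'etrique dont $Q_2$ intervient --- directement sur le premier facteur via $Q_2(x_1\otimes x_2)$, mais \`a travers $Q_2\circ\mu_2$ sur chaque paire adjacente ult\'erieure --- est pr\'ecis\'ement la forme des cod\'erivations de Leibniz, refl\'etant le r\^ole distingu\'e du facteur le plus \`a gauche dans le coproduit $\delta$. La v\'erification de $\delta\circ Q=(Q\otimes id+id\otimes Q)\circ\delta$ se ram\`ene alors \`a identifier les termes en $\mu_j$ de part et d'autre, o\`u interviennent la r\'ecursion $\mu_{n+1}=\mu_n\otimes id-(\mu_n\otimes id)\circ\tau_{n+1}^{-1}$ et le Lemme $\mu_{p+q}\circ sh_{p,q}=0$.

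Pour $Q^2=0$, j'observerais d'abord que $Q$ \'etant une cod\'erivation impaire, $Q^2=\tfrac12[Q,Q]$ est encore une cod\'erivation de $\delta$, donc est enti\`erement d\'etermin\'ee par sa corestriction $p\circ Q^2$; il suffit de prouver $p\circ Q^2=0$. Comme $Q_1$ pr\'eserve la longueur du mot et $Q_2$ l'abaisse de $1$ tandis que $Q_k=0$ pour $k\geq3$, l'application $Q$ envoie $\bigotimes^n V[1]$ dans $\bigotimes^n V[1]\oplus\bigotimes^{n-1}V[1]$, de sorte que $p\circ Q^2$ ne peut \^etre non nulle que sur $\bigotimes^1$, $\bigotimes^2$ et $\bigotimes^3$. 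Sur $\bigotimes^1 V[1]$ on trouve $Q_1^2=d^2=0$; sur $\bigotimes^2 V[1]$ on trouve $Q_1Q_2+Q_2(Q_1\otimes id+id\otimes Q_1)$, qui s'annule exactement par la r\`egle $d(x\curlywedge y)=dx\curlywedge y+(-1)^{|x|}x\curlywedge dy$; et sur $\bigotimes^3 V[1]$ la seule contribution survivante est le compos\'e des deux termes en $Q_2$, dont le d\'eveloppement, apr\`es substitution de $\mu_2=id-\tau_2^{-1}$, redonne pr\'ecis\'ement la relation de Zinbiel \`a droite $(x\curlywedge y)\curlywedge z=x\curlywedge(y\curlywedge z)+(-1)^{|y||z|}x\curlywedge(z\curlywedge y)$. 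Pour $n\geq4$, l'image de $Q^2$ \'evite $\bigotimes^{\leq2}$, donc $p\circ Q^2=0$ automatiquement.

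L'\'etape d\'elicate est le calcul en arit\'e $3$: il faut rassembler avec soin les contributions o\`u $Q_2$ agit en position $1$ puis de nouveau, par opposition \`a celles qui transitent par $\mu_2$ sur un facteur interne, en suivant les signes de Koszul issus des pr\'efacteurs $(-1)^{\sum_{i<k}x_i}$ et de la transposition cach\'ee dans $\mu_2$, puis v\'erifier qu'ils se recombinent en l'axiome de Zinbiel et non en une autre combinaison. C'est l\`a que les signes $(-1)^{x}$ de $Q_2$ et l'antisym\'etriseur $\mu_2$ sont impos\'es. Une fois ces trois identit\'es de basse arit\'e \'etablies, le caract\`ere colibre promeut $p\circ Q^2=0$ en $Q^2=0$ sur tout $T^+(V[1])$, et $(T^+(V[1]),\delta,Q)$ est bien la $Z_\infty$ alg\`ebre enveloppante de $(V,\curlywedge,d)$.
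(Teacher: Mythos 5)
Le texte ne d\'emontre pas cette proposition~: elle est rappel\'ee telle quelle de \cite{[Liv]} (Livernet), sans aucune preuve. Votre argument est donc le seul disponible ici, et c'est bien la d\'emarche standard et correcte pour ce type d'\'enonc\'e~: utiliser le caract\`ere colibre de la cog\`ebre de Leibniz $(T^+(V[1]),\delta)$ pour ramener \`a la fois la propri\'et\'e de cod\'erivation et l'\'equation $Q^2=0$ \`a leurs corestrictions sur les cog\'en\'erateurs, puis constater que celles-ci ne vivent qu'en arit\'es $1$, $2$, $3$ et co\"{\i}ncident respectivement avec $d^2=0$, la r\`egle de Leibniz de $d$ pour $\curlywedge$, et l'axiome de Zinbiel (le calcul de signes en arit\'e $2$ que vous esquissez se v\'erifie~: les deux termes en $dx\curlywedge y$ se compensent et les deux termes en $x\curlywedge dy$ aussi, gr\^ace au facteur $(-1)^{x}$ de $Q_2$). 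Le seul point que vous affirmez sans le calculer est que la formule explicite affich\'ee est bien le prolongement canonique de $Q_1+Q_2$ en cod\'erivation de $\delta$~; en toute rigueur il faut v\'erifier directement $\delta\circ Q=(Q\otimes id+id\otimes Q)\circ\delta$ sur la formule donn\'ee (c'est l\`a qu'interviennent la r\'ecursion des $\mu_j$ et le lemme $\mu_{p+q}\circ sh_{p,q}=0$, comme vous le signalez), la dissym\'etrie entre le terme $Q_2(x_1\otimes x_2)$ et les termes en $Q_2\circ\mu_2$ n'\'etant pas \'evidente a priori. Moyennant cette v\'erification, votre preuve est compl\`ete et conforme \`a celle de la r\'ef\'erence cit\'ee.
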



\section{Les pré-$(a,b)$-alg\`ebres}

\begin{definition}

\

Soient $\mathcal{A}$ un espace vectoriel gradu\'e et $a,b\in
\mathbb{Z}$. On munit l'espace $\mathcal{A}$ d'un produit
$\curlywedge$ de degr\'e $a$ ($|\curlywedge|=a$) et d'un produit
$\lozenge$ de degr\'e $b$ ($|\lozenge|=b$) tel que
$\Big(\mathcal{A}[-a],\curlywedge\Big)$ soit une alg\`ebre
pré-commutative gradu\'ee  et $\Big(\mathcal{A}[-b],\lozenge\Big)$
soit une alg\`ebre pré-Lie gradu\'ee. C'est à dire:
$$(\alpha\curlywedge \beta)\curlywedge \gamma=\alpha\curlywedge (\beta\curlywedge
\gamma)+ (-1)^{(|\beta|+a)(|\gamma|+a)}\alpha\curlywedge
(\gamma\curlywedge \beta),$$
$$
(\alpha\lozenge \beta)\lozenge \gamma-\alpha\lozenge
(\beta\lozenge
\gamma)=(-1)^{(|\beta|+b)(|\gamma|+b)}\big((\alpha\lozenge
\gamma)\lozenge \beta -\alpha\lozenge(\gamma\lozenge\beta)\big).
$$

De plus, les produits $\curlywedge$ et $\lozenge$ vérifient les
relations de compatibilités suivantes:
$$\aligned
&\alpha\curlywedge(\beta\lozenge \gamma)=(-1)^{(|\beta|+b)(|\gamma|+b)}\alpha\curlywedge(\gamma\lozenge \beta)\\
&\alpha\lozenge(\beta\curlywedge \gamma)=(\alpha\lozenge \beta)\curlywedge\gamma\\
&(\alpha\lozenge\beta)\curlywedge
\gamma=(-1)^{(|\beta|+b)(|\gamma|+a)}(\alpha\curlywedge
\gamma)\lozenge\beta.
\endaligned
$$
 On dira que $\big(\mathcal{A},\curlywedge,\lozenge\big)$ est une
 pré-$(a,b)$-alg\`ebre (à droite) gradu\'ee.
 \end{definition}

Si on pose
$[\alpha,\beta]=\alpha\lozenge\beta-(-1)^{(|\alpha|+b)(|\beta|+b)}\beta\lozenge\alpha$
et
$\alpha\pt\beta=\alpha\curlywedge\beta+(-1)^{(|\alpha|+a)(|\beta|+a)}\beta\curlywedge\alpha$,
on obtient que $(\mathcal A,\pt,[~,~])$ est une $(a,b)$-alg\`ebre.
On aura aussi les deux relations suivantes :
$$\aligned
\alpha\curlywedge[\beta,\gamma]&=0\\
[\alpha,\beta\curlywedge\gamma]&=[\alpha,\beta]\curlywedge\gamma.
\endaligned
$$

\begin{ex}(\cite{[AAC2]})

\

 Une alg\`ebre pré-Gerstenhaber \`a droite graduée est un triplet $(\mathcal{G},\curlywedge,\lozenge)$ tel que :
\begin{itemize}
\item $(\mathcal{G},\curlywedge)$ est une algèbre de Zinbiel \`a droite graduée, $|\curlywedge|=0$.

\item $(\mathcal{G}[1],\lozenge)$ est une algèbre pré-Lie \`a droite gradu\'ee, $|\lozenge|=-1$.
\item  On impose les relations de compatibilité suivantes entre $\curlywedge$ et $\lozenge$ :
$$\aligned
&\alpha\curlywedge(\beta\lozenge \gamma)=(-1)^{(|\beta|-1)(|\gamma|-1)}\alpha\curlywedge(\gamma\lozenge \beta)\\
&\alpha\lozenge(\beta\curlywedge \gamma)=(\alpha\lozenge \beta)\curlywedge\gamma\\
&(\alpha\lozenge\beta)\curlywedge
\gamma=(-1)^{(|\beta|-1)|\gamma|}(\alpha\curlywedge
\gamma)\lozenge\beta.
\endaligned
$$
\end{itemize}

Dans ce cas, $(\mathcal G,\pt,[~,~])$ est une alg\`ebre de
Gerstenhaber.

\end{ex}

\begin{ex}

\

 Une alg\`ebre pré-Poisson \`a droite graduée est un triplet $(\mathcal{P},\curlywedge,\lozenge)$ tel que :
\begin{itemize}
\item $(\mathcal{P},\curlywedge)$ est une algèbre de Zinbiel \`a droite graduée, $|\curlywedge|=0$.

\item $(\mathcal{P},\lozenge)$ est une algèbre pré-Lie \`a droite gradu\'ee, $|\lozenge|=0$.
\item  On impose les relations de compatibilité suivantes entre $\curlywedge$ et $\lozenge$ :
$$\aligned
&\alpha\curlywedge(\beta\lozenge \gamma)=(-1)^{|\beta||\gamma|}\alpha\curlywedge(\gamma\lozenge \beta)\\
&\alpha\lozenge(\beta\curlywedge \gamma)=(\alpha\lozenge \beta)\curlywedge\gamma\\
&(\alpha\lozenge\beta)\curlywedge
\gamma=(-1)^{|\beta||\gamma|}(\alpha\curlywedge
\gamma)\lozenge\beta.
\endaligned
$$
\end{itemize}

Dans ce cas, $(\mathcal P,\pt,[~,~])$ est une alg\`ebre de Poisson
graduée. \end{ex}

\begin{ex}

\

Soit $\mathcal{A}$ l'espace des  formes différentielles sur une
variété $M$. On peut munir $\mathcal{A}$ de la graduation:
$|\alpha|=2k+3$, si $\alpha$ est un $k$-forme. L'espace
$\mathcal{A}$ est stable par le produit ext\'erieur $\wedge$. Si
$\alpha$ et $\beta$ sont deux formes différentielles de
$\mathcal{A}$, on définit:
$$
\alpha\curlywedge\beta=\displaystyle\frac{1}{|\beta|}\alpha\wedge
d\beta~~\qquad~\hbox{ et
}~\qquad~\alpha\lozenge\beta=\alpha\wedge\beta.
$$

Alors, on vérifie que $|\curlywedge|=-1$ et
$|\lozenge|=-3$.\vskip0.12cm

Pour $\alpha$, $\beta$ et $\gamma$ dans $\mathcal{A}$, on vérifie
aussi que :
$$
(\alpha\curlywedge \beta)\curlywedge
\gamma=\alpha\curlywedge(\beta\curlywedge\gamma)+(-1)^{(|\beta|-1)(|\gamma|-1)}\alpha\curlywedge(\gamma\curlywedge\beta),
$$
et
$$
(\alpha\lozenge \beta)\lozenge \gamma-\alpha\lozenge
(\beta\lozenge\gamma)=(-1)^{(|\beta|-3)(|\gamma|-3)}\Big((\alpha\lozenge\gamma)\lozenge
\beta -\alpha\lozenge(\gamma\lozenge\beta)\Big).
$$

Les relations de compatibilités entre $\curlywedge$ et $\lozenge$
sont aussi vérifiées :
$$\aligned
\alpha\curlywedge(\beta\lozenge \gamma)&=(-1)^{(|\beta|-3)(|\gamma|-3)}\alpha\curlywedge(\gamma\lozenge \beta)\\
\alpha\lozenge(\beta\curlywedge \gamma)&=(\alpha\lozenge \beta)\curlywedge\gamma\\
(\alpha\lozenge\beta)\curlywedge
\gamma&=(-1)^{(|\beta|-3)(|\gamma|-1)}(\alpha\curlywedge
\gamma)\lozenge\beta.
\endaligned
$$

Ainsi, $(\mathcal{A},\curlywedge,\diamond)$ est bien une pré-$(-1,-3)$-algèbre.\\

\end{ex}


\section{ L'alg\`ebre pr\'e-Lie diff\'erentielle $(\mathcal
H[a-b-1],R_2',D)$}

\

Soit $(\mathcal A,\curlywedge,\lozenge)$ une
pré-$(a,b)$-alg\`ebre. Puisque $\curlywedge$ est une loi
pr\'e-commutative, on lui a associ\'e une cog\`ebre de Leibniz
codiff\'erentielle $(\mathcal H,\delta,D)$. Dans cette section, on
montre
que $\mathcal H[a-b-1]$ est aussi muni d'une structure d'alg\`ebre pr\'e-Lie diff\'erentielle.\\

Pour cela, comme ci-dessus, on utilise un d\'ecalage de degré.
 On consid\`ere l'espace $\mathcal{A}[-a+1]$ muni de la graduation
 $dg(\alpha)=|\alpha|+a-1$ que l'on note simplement par $\alpha$. Sur $\mathcal{A}[-a+1]$, le
 produit $\curlywedge$ n'est plus de Zinbiel et le produit $\lozenge$ n'est pas
 pré-Lie.
  On construit, donc, un nouveau produit $\wedge$ sur $\mathcal{A}[-a+1]=\mathcal{A}[-a][1]$
 de degr\'e $1$ d\'efini
 par:
 $$\alpha\wedge\beta=(-1)^{1.\alpha}\alpha\curlywedge\beta,$$
et un nouveau produit $\diamond$ sur
$\mathcal{A}[-a+1]=\mathcal{A}[-b][b-a+1]$ de degr\'e $b-a+1$
d\'efini par:
$$\alpha\diamond\beta=(-1)^{(b-a+1).\alpha}\alpha\lozenge\beta.$$
Les produits $\wedge$ et $\diamond$ vérifient:
$$(-1)^{\alpha}(\alpha\wedge \beta)\wedge \gamma=-\alpha\wedge (\beta\wedge
\gamma)+ (-1)^{\beta\gamma}\alpha\wedge (\gamma\wedge \beta)$$
$$
(\alpha\diamond \beta)\diamond
\gamma-(-1)^{(b-a+1)(\alpha+1)}\alpha\diamond (\beta\diamond
\gamma)=(-1)^{\beta\gamma+b-a+1}\big((\alpha\diamond
\gamma)\diamond \beta
-(-1)^{(b-a+1)(\alpha+1)}\alpha\diamond(\gamma\diamond
\beta)\big).
$$
$$\aligned
&\alpha\wedge(\beta\diamond \gamma)=(-1)^{\beta\gamma+b-a+1}\alpha\wedge(\gamma\diamond \beta)\\
&\alpha\diamond(\beta\wedge \gamma)=(-1)^{\alpha+b-a+1}(\alpha\diamond \beta)\wedge\gamma\\
&(\alpha\diamond\beta)\wedge
\gamma=(-1)^{\beta\gamma+b-a+1}(\alpha\wedge \gamma)\diamond\beta.
\endaligned
$$
Définissons maintenant les produits $m_2$ et $\ell$ en posant :
$$
m_2(\alpha,\beta)=
 (-1)^{1.\alpha}\alpha\pt\beta
 =\alpha\wedge\beta-(-1)^{\alpha\beta}\beta\wedge\alpha$$
 $$\hbox{et }~~\ell(\alpha,\beta)=
(-1)^{(b-a+1).\alpha}[\alpha,\beta]=\alpha\diamond\beta-(-1)^{\alpha\beta+b-a+1}\beta\diamond\alpha.
$$
On aura aussi les deux relations suivantes :
$$\aligned
\alpha\wedge\ell(\beta,\gamma)&=0\\
\ell(\alpha,\beta\wedge\gamma)&=(-1)^{\alpha+b-a+1}\ell(\alpha,\beta)\wedge\gamma.
\endaligned
$$
De plus, $m_2$ et $\ell$ vérifient:
$$m_2(\alpha,\beta)=-(-1)^{\alpha
\beta}m_2(\beta,\alpha)$$
$$\ell(\alpha,\beta)=-(-1)^{b-a+1}(-1)^{\alpha
\beta}\ell(\beta,\alpha)$$
$$(-1)^{\alpha
\gamma}\ell(\ell(\alpha,\beta),\gamma\big)+(-1)^{\beta
\alpha}\ell(\ell(\beta,\gamma),\alpha) +(-1)^{\gamma
\beta}\ell(\ell(\gamma,\alpha),\beta)=0$$
$$
\ell(\alpha,m_2(\beta,\gamma))=(-1)^{\alpha+b-a+1}m_2(\ell(\alpha,\beta),\gamma)+
(-1)^{(\alpha+b-a+1)(\beta+1)} m_2(\beta,\ell(\alpha,\gamma))
$$

 On consid\`ere comme pr\'ec\'edemment
l'espace
 $$\mathcal{H}=\displaystyle\bigoplus_{n\geq1}\big(\bigotimes^n\mathcal{A}[-a+1]\big)=
 T^+(\mathcal{A}[-a+1])$$ et pour
$X=\alpha_1\otimes\dots\otimes\alpha_n\in\mathcal{H}$, le degr\'e
$dg(X)=\alpha_1+\dots+\alpha_n$ not\'e simplement par
$x$.\\

On prolonge $\wedge$ \`a $\mathcal{H}$ en $D$ de telle fa\c con
que $(\mathcal{H},\delta,D)$ soit une cogèbre de Leibniz
codifférentielle. Ce prolongement est donné par:
$$\aligned
D(\alpha_1\otimes \dots \otimes \alpha_n)&=(\alpha_1\wedge \alpha_2)\otimes \alpha_3\otimes\dots \otimes \alpha_n+\\
&+\sum_{k=2}^{n-1}(-1)^{\sum_{i<k}\alpha_i}\alpha_1\otimes\dots\otimes
m_2(\alpha_k, \alpha_{k+1})\otimes\dots\otimes \alpha_n.
\endaligned
$$

On prolonge après $\diamond$ \`a $\mathcal{H}$ en $R_2$. Si
$X=\alpha_1\otimes...\otimes\alpha_p$ et
$Y=\alpha_{p+1}\otimes...\otimes\alpha_{p+q}$ sont deux
\'el\'ements de $\mathcal H$, ce prolongement est donné par:
$$\aligned
&R_2(X,Y)=\\&=(\alpha_{1}\diamond\alpha_{p+1})\otimes\sum_{\sigma\in
Sh_{p-1,q-1}}\varepsilon_\alpha(\sigma^{-1})
(-1)^{(\alpha_2+\dots+\alpha_p)\alpha_{p+1}}\alpha_{\sigma^{-1}(2)}\otimes \dots\widehat{_{p+1}}\dots\otimes\alpha_{\sigma^{-1}(p+q)}\\
&+\hskip-0.5cm\sum_{\begin{smallmatrix}2\leq k\leq p\\ \sigma\in
Sh_{p-k,q-1}\end{smallmatrix}}\hskip-0.5cm\varepsilon_\alpha(\sigma^{-1})
(-1)^{(\alpha_{k+1}+\dots+\alpha_p)\alpha_{p+1}}(-1)^{(b-a+1)\sum_{s<k}\alpha_{s}}
\alpha_{1}\otimes\alpha_{2}\otimes\dots\otimes\alpha_{k-1}\otimes\\&\hskip2cm\otimes\ell(\alpha_{k},\alpha_{p+1})\otimes\alpha_{\sigma^{-1}(k+1)}\otimes\dots\widehat{_{p+1}}\dots\otimes\alpha_{\sigma^{-1}(p+q)}.
\endaligned
$$

On consid\`ere, maintenant, l'espace $\mathcal{H}[a-b-1]$ muni de
la graduation $dg'(X)=dg(X)-a+b+1$ not\'e simplement par $x'$ pour
$X\in\mathcal{H}[a-b-1]$. On pose
$R'_2(X,Y)=(-1)^{(a-b-1)dg'(X)}R_2(X,Y)$. Alors, le produit $R'_2$
est de degr\'e $0$ dans $\mathcal{H}[a-b-1]$ et la
diff\'erentielle $D$ reste de degr\'e $1$. Et on a

\begin{thm}

\

Le triplet $(\mathcal{H}[a-b-1],R_2',D)$ est une alg\`ebre pré-Lie diff\'erentielle gradu\'ee.\\
\end{thm}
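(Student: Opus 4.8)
Le plan est de v\'erifier directement les trois propri\'et\'es d\'efinissant une alg\`ebre pr\'e-Lie diff\'erentielle gradu\'ee pour $(\mathcal{H}[a-b-1],R_2',D)$, en exploitant le fait que les d\'ecalages et la torsion de signe $R_2'(X,Y)=(-1)^{(a-b-1)dg'(X)}R_2(X,Y)$ ont \'et\'e choisis pr\'ecis\'ement pour que $R_2'$ soit de degr\'e $0$ et $D$ de degr\'e $1$ (ce qui est d\'ej\`a not\'e avant l'\'enonc\'e). Les trois propri\'et\'es seront ramen\'ees, emplacement par emplacement, aux relations pr\'e-$(a,b)$ d\'ecal\'ees v\'erifi\'ees par les op\'erations g\'en\'eratrices $\wedge,\diamond,m_2,\ell$ sur $\mathcal{A}[-a+1]$. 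Tout d'abord, $D\circ D=0$ est acquis~: par la construction qui pr\'ec\`ede le th\'eor\`eme, $(\mathcal{H},\delta,D)$ est une cog\`ebre de Leibniz codiff\'erentielle, donc $D^2=0$, et le passage \`a $\mathcal{H}[a-b-1]$ ne change pas $D$. Il ne reste donc que la r\`egle de d\'erivation et l'identit\'e pr\'e-Lie.

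Pour la r\`egle de d\'erivation $D(R_2'(X,Y))=R_2'(DX,Y)+(-1)^{x'}R_2'(X,DY)$, je d\'efairais la torsion de signe pour la transformer en une r\`egle de d\'erivation gradu\'ee pour $R_2$, puis je raisonnerais par r\'ecurrence sur la longueur totale $n=p+q$. Le point essentiel est que dans le mot $R_2(X,Y)$ la t\^ete provient de $\diamond$ (le morceau $\alpha_1\diamond\alpha_{p+1}$) tandis que les emplacements int\'erieurs portent le crochet $\ell(\alpha_k,\alpha_{p+1})$~; en appliquant $D$, qui agit par $\wedge$ sur la paire de t\^ete et par $m_2$ sur les paires int\'erieures, et en comparant avec les deux termes de droite, les contributions se recombinent emplacement par emplacement gr\^ace aux relations de compatibilit\'e $(\alpha\diamond\beta)\wedge\gamma=(-1)^{\beta\gamma+b-a+1}(\alpha\wedge\gamma)\diamond\beta$, $\alpha\diamond(\beta\wedge\gamma)=(-1)^{\alpha+b-a+1}(\alpha\diamond\beta)\wedge\gamma$ et $\alpha\wedge(\beta\diamond\gamma)=(-1)^{\beta\gamma+b-a+1}\alpha\wedge(\gamma\diamond\beta)$, ainsi qu'aux deux relations d\'eriv\'ees $\alpha\wedge\ell(\beta,\gamma)=0$ et $\ell(\alpha,\beta\wedge\gamma)=(-1)^{\alpha+b-a+1}\ell(\alpha,\beta)\wedge\gamma$. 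Les signes de Koszul ne sont que de la comptabilit\'e une fois la r\'ecurrence en place. De fa\c{c}on \'equivalente, comme $(\mathcal{H},\delta)$ est colibre, on peut invoquer que $D\circ R_2$ et $R_2\circ(D\otimes\mathrm{id}\pm\mathrm{id}\otimes D)$ sont d\'etermin\'es par leur corestriction aux g\'en\'erateurs, o\`u l'identit\'e se r\'eduit \`a la version g\'en\'eratrice.

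L'identit\'e pr\'e-Lie est le c\oe{}ur du probl\`eme. En posant $\mathrm{As}(X,Y,Z)=R_2'(R_2'(X,Y),Z)-R_2'(X,R_2'(Y,Z))$, le but est $\mathrm{As}(X,Y,Z)=(-1)^{y'z'}\mathrm{As}(X,Z,Y)$~; apr\`es suppression de la torsion de signe, cela devient une identit\'e pour les produits imbriqu\'es $R_2$. En d\'eveloppant avec la formule explicite, le trait d\'ecisif est que $R_2$ absorbe la \emph{premi\`ere} lettre de son second argument, soit par un produit $\diamond$ avec la t\^ete globale, soit par un crochet $\ell$ \`a un emplacement int\'erieur, les lettres restantes \'etant m\'elang\'ees. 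Les termes se r\'epartissent alors en trois familles. Les termes o\`u les trois t\^etes interagissent produisent $(\alpha_1\diamond\beta_1)\diamond\gamma_1$ d'un c\^ot\'e et $\alpha_1\diamond(\beta_1\diamond\gamma_1)$ de l'autre~; leur combinaison est exactement le membre de gauche de la relation pr\'e-Lie d\'ecal\'ee pour $\diamond$, qui la transforme en l'expression o\`u $\beta_1$ et $\gamma_1$ sont \'echang\'es, c'est-\`a-dire en les termes correspondants de $\mathrm{As}(X,Z,Y)$. Les termes doublement crochet\'es $\ell(\ell(\alpha_j,\beta_1),\gamma_1)$ sont r\'egl\'es par l'identit\'e de Jacobi gradu\'ee pour $\ell$ (elle-m\^eme cons\'equence de la relation pr\'e-Lie). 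Enfin les termes restants, o\`u la t\^ete de $Z$ est absorb\'ee dans la queue ind\'ependamment de la t\^ete de $Y$, sont manifestement sym\'etriques par m\'elange en $Y$ et $Z$ et s'apparient d'un c\^ot\'e \`a l'autre via le signe $(-1)^{y'z'}$.

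Je m'attends \`a ce que l'identit\'e pr\'e-Lie soit de loin l'obstacle principal, et en son sein le point vraiment d\'elicat est celui des termes imbriqu\'es mixtes~: l'application $R_2(X,R_2(Y,Z))$ produit des crochets int\'erieurs de la forme $\ell(\alpha_j,\beta_1\diamond\gamma_1)$, qui n'ont pas d'analogue dans $R_2(R_2(X,Y),Z)$ et dont le partenaire \'echang\'e attendu $\ell(\alpha_j,\gamma_1\diamond\beta_1)$ ne leur est \emph{pas} \'egal, puisque $\diamond$ n'est ni commutatif ni anticommutatif. Pour les r\'econcilier, il faut d\'eplier $\ell$ en $\diamond$ et invoquer \`a nouveau la relation pr\'e-Lie pour $\diamond$, de sorte que c'est bien la relation pr\'e-Lie, et non la seule identit\'e de Jacobi, qui est le v\'eritable moteur du calcul. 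Un principe d'organisation utile tout du long est que $R_2$, h\'erit\'e de la cog\`ebre de Leibniz $(\mathcal{H},\delta)$, distingue la premi\`ere lettre de son second argument~; je stratifierais donc chaque terme selon la lettre de $X$ (ou du mot interm\'ediaire) qui re\c{c}oit la t\^ete de $Z$, ramenant l'identit\'e enti\`ere, famille par famille, aux relations pr\'e-Lie et de compatibilit\'e au niveau des g\'en\'erateurs.
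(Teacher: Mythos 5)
Votre d\'emonstration est correcte et suit pour l'essentiel la m\^eme strat\'egie que celle de l'article : v\'erification directe de l'identit\'e pr\'e-Lie avec la m\^eme stratification des termes selon les emplacements o\`u les t\^etes de $Y$ et de $Z$ sont absorb\'ees (t\^etes embo\^{\i}t\'ees trait\'ees par la relation pr\'e-Lie de $\diamond$, doubles crochets et termes mixtes $\ell(\alpha_k,\alpha_{p+1}\diamond\alpha_{p+q+1})$ recombin\'es puis annul\'es, termes \`a positions ind\'ependantes annul\'es par la commutativit\'e des battements), et r\'eduction aux g\'en\'erateurs pour la r\`egle de d\'erivation. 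Seule nuance : l'article referme les termes mixtes en reconstituant $\ell(\alpha_k,\ell(\alpha_{p+1},\alpha_{p+q+1}))$ \`a partir des deux ordres de $\diamond$ puis en appliquant l'identit\'e de Jacobi de $\ell$, plut\^ot qu'en d\'epliant $\ell$ en $\diamond$ et en r\'einvoquant la relation pr\'e-Lie comme vous le sugg\'erez --- les deux voies sont \'equivalentes.
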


\begin{proof}

\

La démonstration de ce théorème reprend celle du théorème de
\cite{[A]} pour les $(a,b)$-algèbres: les choix de signes sont
ceux de \cite{[A]} et les prolongements sont ceux des algèbres
pré-Gerstenhaber \cite{[AAC2]}. Dans la preuve, plusieurs cas
apparaissent. Par exemple, prouvons la relation $ (\ast)$ suivante
:
$$R_2'\big(R_2'(X,Y),Z\big)-R_2'\big(X,R_2'(Y,Z)\big)-(-1)^{y'z'}\hskip-0.18cm\Big(R_2'\big(R_2'(X,Z),Y\big)-R_2'\big(X,
R_2'(Z,Y)\big)\hskip-0.15cm\Big)\hskip-0.1cm=0.
$$

 Soient
$$\aligned
X&=\alpha_1\otimes\dots\otimes\alpha_p,\\
Y&=\alpha_{p+1}\otimes\dots\otimes\alpha_{p+q}\\
Z&=\alpha_{p+q+1}\otimes\dots\otimes\alpha_{p+q+r}
\endaligned
$$
trois \'el\'ements de $\mathcal H[a-b-1]$.

Dans la relation $(\ast)$, il appara\^\i t 4 types de termes :
\begin{itemize}
\item[1.] Dans $(\ast)$, il appara\^it des termes avec deux $\diamond$ :
$$
(\alpha_1\diamond\alpha_{p+1})\diamond\alpha_{p+q+1},\quad\alpha_1\diamond(\alpha_{p+1}\diamond\alpha_{p+q+1}),\quad(\alpha_1\diamond\alpha_{p+q+1})\diamond\alpha_{p+1},\quad
\alpha_1\diamond(\alpha_{p+q+1}\diamond\alpha_{p+1}).
$$
Ces termes apparaissent sous la forme $\pm
terme\otimes\alpha_{\sigma^{-1}(2)}\otimes\dots\alpha_{\sigma^{-1}(p+q+r)}$
o\`u $\sigma$ est un shuffle de
$\{1,\dots,p+q+r\}\setminus\{1,p+1,p+q+1\}$ : $\sigma\in
Sh_{p-1,q-1,r-1}$. Plus pr\'ecis\'ement, on pose :
$$
\varepsilon_\sigma=\varepsilon_\alpha(\sigma)(-1)^{(x-\alpha_1)\alpha_{p+1}}(-1)^{(x+y-\alpha_1-\alpha_{p+1})\alpha_{p+q+1}},
$$
et
$$
A_\sigma=\alpha_{\sigma^{-1}(2)}\otimes\dots\widehat{_{p+1}}\dots\widehat{_{p+q+1}}\dots\otimes\alpha_{\sigma^{-1}(p+q+r)}.
$$
La contribution des termes correspondants est
$C\otimes(\varepsilon_\sigma A_\sigma)$ avec :
$$\aligned
&C=(-1)^{(a-b-1)y'}\Big((\alpha_1\diamond\alpha_{p+1})\diamond\alpha_{p+q+1}-(-1)^{(b-a+1)(\alpha_1+1)}
\alpha_1\diamond(\alpha_{p+1}\diamond\alpha_{p+q+1})\\
&-(-1)^{\alpha_{p+1}\alpha_{p+q+1}+b-a+1}\big((\alpha_1\diamond\alpha_{p+q+1})\diamond\alpha_{p+1}-(-1)^{(b-a+1)(\alpha_1+1)}
\alpha_1\diamond(\alpha_{p+q+1}\diamond\alpha_{p+1})\big)\Big).
\endaligned
$$

Ces termes se simplifient gr\^ace \`a la relation vérifiée par $\diamond$.\\
\item[2.] Dans $(\ast)$, il appara\^it des termes avec un double crochet $\ell$ ou un $\diamond$ dans un crochet :
$$
\ell(\ell(\alpha_k,\alpha_{p+1}),\alpha_{p+q+1}),\
\ell(\alpha_k,\alpha_{p+1}\diamond\alpha_{p+q+1}),\ \ell(\ell(
\alpha_k,\alpha_{p+q+1}),\alpha_{p+1}),\
\ell(\alpha_k,\alpha_{p+q+1}\diamond\alpha_{p+1}).
$$
Ces termes apparaissent pour $1<k\leq p$ sous la forme
$$\aligned
&\pm (\alpha_1\otimes\dots\otimes\alpha_{k-1})\otimes terme\otimes\alpha_{\sigma^{-1}(k+1)}\otimes\dots\widehat{_{p+1}}\dots\widehat{_{p+q+1}}\dots\otimes\alpha_{\sigma^{-1}(p+q+r)}\\
=&\pm A_k\otimes terme\otimes B_\sigma.
\endaligned
$$
o\`u $\sigma$ est dans $Sh_{p-k,q-1,r-1}$ agissant sur
$\{k+1,\dots,p+q+r\}\setminus\{p+1,p+q+1\}$. On obtient donc les
termes $(-1)^{(a-b-1)y'}\varepsilon_\alpha(\sigma) A_k\otimes
C\otimes B_\sigma$ avec:
$$\aligned
&C=
\ell(\ell(\alpha_k,\alpha_{p+1}),\alpha_{p+q+1})-(-1)^{(\alpha_k+1)(b-a+1)}\ell(\alpha_k,\alpha_{p+1}\diamond
\alpha_{p+q+1})\\&-(-1)^{\alpha_{p+1}\alpha_{p+q+1}+b-a+1}\Big(\ell(\ell(
\alpha_k,\alpha_{p+q+1}),\alpha_{p+1})-(-1)^{(\alpha_k+1)(b-a+1)}
\ell(\alpha_k,\alpha_{p+q+1}\diamond\alpha_{p+1})\Big)\\
&=\ell(\ell(\alpha_k,\alpha_{p+1}),\alpha_{p+q+1})-(-1)^{(\alpha_k+1)(b-a+1)}\ell(\alpha_k,\ell
(\alpha_{p+1},\alpha_{p+q+1}))\\&\hskip2.8cm-
(-1)^{\alpha_{p+1}\alpha_{p+q+1}+b-a+1}\ell(\ell(\alpha_k,\alpha_{p+q+1}),\alpha_{p+1})\\
&=0.
\endaligned
$$

\item[3.] Dans $(\ast)$, il appara\^it des termes de la forme
$$
\dots\otimes\ell(\alpha_k,\alpha_{p+1})\otimes\dots\otimes\ell(\alpha_\ell,\alpha_{p+q+1})
\otimes\cdots,\quad\dots\otimes\ell(\alpha_\ell,\alpha_{p+q+1})\otimes\dots\otimes\ell(\alpha_k,\alpha_{p+1})\otimes\cdots.
$$
Plus pr\'ecis\'ement :

- Dans $R_2'\big(R_2'(X,Y),Z\big)$, pour tout $k\in\{2,\dots,p\}$,
les termes qui apparaissent sont :

$(1.1):
\dots\otimes\ell(\alpha_k,\alpha_{p+1})\otimes\dots\otimes\ell(\alpha_\ell,\alpha_{p+q+1})\otimes\cdots$
, avec $k<\ell\leq p$,

$(1.2):
\dots\otimes\ell(\alpha_k,\alpha_{p+1})\otimes\dots\otimes\ell(\alpha_\ell,\alpha_{p+q+1})\otimes\cdots$
, avec $p+1<\ell\leq p+q$,

$(1.3): \dots\otimes\ell(\alpha_\ell,\alpha_{p+q+1})\otimes\dots\otimes\ell(\alpha_k,\alpha_{p+1})\otimes\cdots$ , avec $1<\ell< k$.\\

- Dans $R_2'\big(X,R_2'(Y,Z)\big)$, pour tout $k\in\{2,\dots,p\}$,
les termes qui apparaissent sont :

$(2.1): \dots\otimes\ell(\alpha_k,\alpha_{p+1})\otimes\dots\otimes\ell(\alpha_\ell,\alpha_{p+q+1})\otimes\cdots$ , avec $p+1<\ell\leq p+q$.\\

- Dans $R_2'\big(R_2'(X,Z),Y\big)$, pour tout $k\in\{2,\dots,p\}$,
les termes qui apparaissent sont :

$(3.1):
\dots\otimes\ell(\alpha_k,\alpha_{p+q+1})\otimes\dots\otimes\ell(\alpha_\ell,\alpha_{p+1})\otimes\cdots$
, avec $k<\ell\leq p$,

$(3.2):
\dots\otimes\ell(\alpha_k,\alpha_{p+q+1})\otimes\dots\otimes\ell(\alpha_\ell,\alpha_{p+1})\otimes\cdots$
, avec $p+q+1<\ell\leq p+q+r$,

$(3.3): \dots\otimes\ell(\alpha_\ell,\alpha_{p+1})\otimes\dots\otimes\ell(\alpha_k,\alpha_{p+q+1})\otimes\cdots$ , avec $1<\ell< k$.\\

- Dans $R_2'\big(X,R_2'(Z,Y)\big)$, pour tout $k\in\{2,\dots,p\}$,
les termes qui apparaissent sont :

$(4.1): \dots\otimes\ell(\alpha_k,\alpha_{p+q+1})\otimes\dots\otimes\ell(\alpha_\ell,\alpha_{p+1})\otimes\cdots$ , avec $p+q+1<\ell\leq p+q+r$.\\

Il est clair que $(1.2)-(2.1)=0$ et $(3.2)-(4.1)=0$. En utilisant
la commutativité des battements, on vérifie que
$(1.1)=(3.3)$ et $(1.3)=(3.1)$.\\

\item[4.] Enfin, dans $(\ast)$, il appara\^it des termes de la forme
$$
\alpha_1\diamond\alpha_{p+1}\otimes\dots\otimes\ell(\alpha_k,\alpha_{p+q+1})\otimes\cdots,\quad
\alpha_1\diamond\alpha_{p+q+1}\otimes\dots
\otimes\ell(\alpha_k,\alpha_{p+1})\otimes\dots.
$$
Plus pr\'ecis\'ement,\\

- Dans $R_2'\big(R_2'(X,Y),Z\big)$, les termes qui apparaissent
sont :

$(1.1)':
\alpha_1\diamond\alpha_{p+1}\otimes\dots\otimes\ell(\alpha_k,\alpha_{p+q+1})\otimes\cdots$
, avec $1<k\leq p$,

$(1.2)':
\alpha_1\diamond\alpha_{p+1}\otimes\dots\otimes\ell(\alpha_k,\alpha_{p+q+1})\otimes\cdots$
, avec $p+1<k\leq p+q$,

$(1.3)': \alpha_1\diamond\alpha_{p+q+1}\otimes\dots\otimes\ell(\alpha_k,\alpha_{p+1})\otimes\cdots$ , avec $1<k\leq p$.\\

\n- Dans $R_2'\big(X,R_2'(Y,Z)\big)$, les termes qui apparaissent
sont:

$(2.1)': \alpha_1\diamond\alpha_{p+1}\otimes\dots\otimes\ell(\alpha_k,\alpha_{p+q+1})\otimes\cdots$ , avec $p+1<k\leq p+q$.\\

- Dans $R_2'\big(R_2'(X,Z),Y\big)$, les termes qui apparaissent
sont :

$(3.1)':
\alpha_1\diamond\alpha_{p+q+1}\otimes\dots\otimes\ell(\alpha_k,\alpha_{p+1})\otimes\cdots$
, avec $1<k\leq p$,

$(3.2)':
\alpha_1\diamond\alpha_{p+q+1}\otimes\dots\otimes\ell(\alpha_k,\alpha_{p+1})\otimes\cdots$
, avec $p+q+1<k\leq p+q+r$,

$(3.3)': \alpha_1\diamond\alpha_{p+1}\otimes\dots\otimes\ell(\alpha_k,\alpha_{p+q+1})\otimes\cdots$ , avec $1< k\leq p$.\\

- Dans $R_2'\big(X,R_2'(Z,Y)\big)$, les termes qui apparaissent
sont :

$(4.1)': \alpha_1\diamond\alpha_{p+q+1}\otimes\dots\otimes\ell(\alpha_k,\alpha_{p+1})\otimes\cdots$ , avec $p+q+1<k\leq p+q+r$.\\

Il est clair que $(1.2)'-(2.1)'=0$ et $(3.2)'-(4.1)'=0$. En
utilisant la commutativité du produit Shuffle,
 on vérifie que $(1.1)'=(3.3)'$ et $(1.3)'=(3.1)'$.\\
\end{itemize}

On montre de m\^eme que la différentielle $D$ est une dérivation
de $R_2'$. Pour $X=\alpha_1\otimes\dots\otimes\alpha_p$ et $
Y=\alpha_{p+1}\otimes\dots\otimes\alpha_{p+q}$, on vérifie que
$$
D\circ
R_2'(X,Y)=R_2'\big(D(X),Y\big)+(-1)^{x'}R_2'\big(X,D(Y)\big)
$$

\end{proof}

Maintenant, puisque $(\mathcal H[a-b-1],R_2',D)$ est une alg\`ebre
pr\'e-Lie diff\'erentielle gradu\'ee, on construit alors sa
$preL_\infty$ algèbre enveloppante $(\mathcal{H}[a-b]\otimes
S(\mathcal{H}[a-b]),\Delta,Q)$. Explicitement, dans $\mathcal
H[a-b]$, le degré est $deg''(X)=deg(X)-a+b=x''$, on pose
$$
R''_2(X,Y)=(-1)^{x''}R_2'(X,Y)~~~\text{ et
}~~\ell_2''(X,Y)=R_2''(X,Y)+(-1)^{x''y''}R_2''(Y,X).~
$$

On prolonge ensuite \`a $\mathcal{H}[a-b]\otimes
S(\mathcal{H}[a-b])$, $D$ et $R''_2$ en $m$ et $R$ par :
$$\aligned
m(X_0\otimes X_1\dots X_n)&=D(X_0)\otimes X_1\dots X_n+\\
&\hskip
0.2cm+(-1)^{x_0''}\sum_{j=1}^{n}\varepsilon_{x''}\Big(\begin{smallmatrix}
x_1\dots x_n\\ x_j~x_1\dots\hat{_j}\dots
x_n\end{smallmatrix}\Big)X_0\otimes
D(X_j).X_1\dots\widehat{_j}\dots X_n\\
&=D(X_0)\otimes X_1\dots X_n+(-1)^{x_0''}X_0\otimes m''(X_1\dots
X_n)
\endaligned
$$
où $m''$ est la codérivation donnée comme ci-dessus dans le cas
des $(a,b)$-algèbres.

De m\^eme,
$$\aligned
R(X_0\otimes X_1\dots
X_n)&=\sum_{i=1}^n\varepsilon_{x''}\Big(\begin{smallmatrix}
x_1\dots x_n\\ x_i~x_1\dots\hat{_i}\dots
x_n\end{smallmatrix}\Big)R_2''(X_0, X_i) \otimes
X_{1}\dots \widehat{_i}\dots X_{n}+\\
&\hskip-0.4cm
+(-1)^{x_0''}\sum_{i<j}\varepsilon_{x''}\Big(\begin{smallmatrix}
x_1\dots x_n\\ x_i~x_j~x_1\dots\hat{_i}\dots\hat{_j}\dots
x_n\end{smallmatrix}\Big) X_0\otimes
\ell_2''(X_i, X_j).X_{1}\dots\widehat{_i}\dots\widehat{_j}\dots X_{n}\\
&=\sum_{i=1}^n\varepsilon_{x''}\Big(\begin{smallmatrix} x_1\dots
x_n\\ x_i~x_1\dots\hat{_i}\dots
x_n\end{smallmatrix}\Big)R_2''(X_0, X_i) \otimes X_{1}\dots
\widehat{_i}\dots
X_{n}\\&\hskip-0.4cm+(-1)^{x_0''}X_0\otimes\ell''(X_1\dots X_n),
\endaligned
$$
où $\ell''$ est la codérivation donnée comme ci-dessus dans le cas
des $(a,b)$-algèbres.

Si on pose $Q=(m+R)$, alors $Q$ est une cod\'erivation de $\Delta$
vérifiant $deg''(Q)=1$ et $Q^2=0$.

Ainsi on obtient que $\Big(\mathcal{H}[a-b]\otimes
S(\mathcal{H}[a-b]),\Delta,Q=m+R\Big)$ est une cogèbre
permutative codifférentielle, c'est à dire, une $preL_\infty$ alg\`ebre.\\

\noindent Il reste \`a construire sur cette $preL_\infty$
alg\`ebre le coproduit $\kappa$ qui fera de
$\mathcal H[a-b]\otimes S(\mathcal H[a-b])$ une cog\`ebre de Leibniz. C'est le but de la section suivante.\\


\section{ Le coproduit $\kappa$}

\

D'abord, on a vu que $(\mathcal{H},\delta)$ est une cog\`ebre de
Leibniz. On rappelle que :
$$
\delta(X)=\delta(x_1\otimes \dots \otimes x_n)=\sum_{1\leq k\leq
n-1}(x_1\otimes \dots \otimes
x_{k})\bigotimes\mu_{n-k}(x_{k+1}\otimes \dots \otimes x_{n}).
$$
On note simplement le coproduit $\delta$ :
$$
\delta(X)=\sum_{U\otimes V=X}U\otimes \mu V.
$$

On se place maintenant dans $\mathcal{H}[a-b]$, le coproduit
$\delta$ sym\'etris\'e sera not\'e $\kappa$.

\begin{definition}

\

Sur $\mathcal H[a-b]$, on d\'efinit le cocrochet suivant :
$$
\kappa(X_0)=\sum_{U_0\otimes
V_0=X_0}(-1)^{(a-b)u_0''}\Big(U_0\bigotimes\mu
V_0+(-1)^{u_0''v_0''+a-b+1}\mu V_0\bigotimes U_0\Big)
$$
Ce cocrochet $\kappa$ se prolonge en un coproduit, toujours not\'e
$\kappa$, de degré $a-b$, d\'efini sur $\mathcal{H}[a-b]\otimes
S(\mathcal{H}[a-b])$ par :
$$\aligned
&\kappa(X_0\otimes X_1\dots X_n)=\sum_{\begin{smallmatrix}
U_0\otimes V_0= X_0\\ I\cup
J=\{1,\dots,n\}\end{smallmatrix}}(-1)^{(a-b)u_0''}\times\Big(\varepsilon_{x''}\Big(\begin{smallmatrix}
u_0v_0x_1\dots x_n\\ u_0~x_I~v_0~x_J\end{smallmatrix}\Big)U_0\otimes X_I\bigotimes\mu V_0.X_J\\
&+(-1)^{a-b+1}\varepsilon_{x''}\Big(\begin{smallmatrix} u_0v_0x_1\dots x_n\\
v_0~x_J~u_0~x_I\end{smallmatrix}\Big)\mu V_0\otimes X_J\bigotimes
U_0. X_I\Big)+(-1)^{(a-b)x_0''}X_0\otimes \delta''(X_1... X_n),
\endaligned
$$
o\`u $\delta''$ est le cocrochet sur $S^+(\mathcal{H}[a-b])$,
d\'efini comme dans la section sur les $(a,b)$-algèbres par :
\begin{align*}
&\delta''(X_1\dots X_n)=\sum_{\begin{smallmatrix}1\leq s\leq n\\
I\cup
J=\{1,\dots,n\}\setminus\{s\}\end{smallmatrix}}(-1)^{\sum_{i<s}(a-b)x_i''}\sum_{\begin{smallmatrix}U_s\otimes
V_s=X_s\\ U_s,V_s\neq\emptyset\end{smallmatrix}
}(-1)^{(a-b)u_s''}\times\cr&\times\left(\varepsilon_{x''}\left(\begin{smallmatrix}
x_1\dots x_n\\ x_I~u_s~v_s~x_J\end{smallmatrix}\right)X_I.
U_s\bigotimes \mu V_s .
X_J+(-1)^{a-b+1}\varepsilon_{x''}\left(\begin{smallmatrix} x_1\dots x_n\\
x_I~v_s~u_s~x_J\end{smallmatrix}\right)X_I.\mu V_s\bigotimes U_s .
X_J\right).
\end{align*}
\end{definition}

Maintenant $\kappa$ est un coproduit de degr\'e $a-b$ qui, en un
certain sens, prolonge le coproduit de Leibniz $\delta$, de
degr\'e 0, d\'efini sur $\mathcal H$. En fait, on peut dire que
$(\mathcal H[a-b]\otimes S(\mathcal H[a-b],\kappa)$ est une
cog\`ebre de Leibniz, en tenant compte de ce d\'ecalage de
degr\'e. C'est \`a dire :

\begin{proposition}

\

Le coproduit $\kappa$ vérifie:
$$
(-1)^{a-b}(id\otimes \kappa)\circ\kappa=\big(\kappa\otimes id+
\tau''_{23}\circ(\kappa\otimes id)\big)\circ\kappa.
$$
\end{proposition}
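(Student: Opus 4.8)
The plan is to verify the identity directly on a general element $X_0\otimes X_1\dots X_n$ of $\mathcal{H}[a-b]\otimes S(\mathcal{H}[a-b])$, by applying $\kappa$ twice and matching the resulting terms on the two sides. The natural first step is to split $\kappa$ according to its two mechanisms: the part that cuts the head $X_0$ through the symmetrised Leibniz coproduct $U_0\otimes\mu V_0$ (the first two sums in the definition, which simultaneously redistribute the symmetric factors $X_1\dots X_n$ into $X_I$ and $X_J$ via the cocommutative coproduct $\Delta$), and the part $(-1)^{(a-b)x_0''}X_0\otimes\delta''(X_1\dots X_n)$, which leaves the head untouched and applies the Lie cocracket $\delta''$ to the symmetric factors. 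Since the restriction of $\kappa$ to the second mechanism is governed entirely by $\delta''$ and $\Delta$, whose mutual identities (coJacobi and coLeibniz) have already been established in the $(a,b)$-algebra section, the genuinely new content lies in the behaviour of the head-splitting part and in its interaction with $\delta''$.

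After expanding $(id\otimes\kappa)\circ\kappa$ and $(\kappa\otimes id)\circ\kappa$, the terms organise into three families according to how the two successive applications of $\kappa$ act. First, the terms in which both applications split the head: here $X_0$ is cut into three blocks of the shape $U_0\otimes\mu V_0\otimes\mu W_0$, and these are controlled exactly by the Leibniz coalgebra identity $(id\otimes\delta)\circ\delta=(\delta\otimes id-\tau_{23}\circ(\delta\otimes id))\circ\delta$ valid on $(\mathcal{H},\delta)$; the minus sign there, combined with the degree shift by $a-b$, is precisely what produces the global factor $(-1)^{a-b}$ on the left and converts $-\tau_{23}$ into $+\tau''_{23}$ on the right. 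Second, the terms in which both applications act on the symmetric factors reduce to the coJacobi identity already proved for $\delta''$ (the Lie-coalgebra axiom), and cancel among themselves. Third, the mixed terms, where one application splits $X_0$ while the other acts on the symmetric factors through $\delta''$, are reconciled by the coLeibniz compatibility $(id\otimes\Delta)\circ\delta''=(\delta''\otimes id)\circ\Delta+\tau_{12}''\circ(id\otimes\delta'')\circ\Delta$ together with the coassociativity of $\Delta$, which is what dictates how the symmetric factors are shared between the two output slots.

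I expect the main obstacle to be the sign and combinatorial bookkeeping rather than any conceptual difficulty. The operator $\mu$ is not a mere Koszul sign but the signed sum of cyclic permutations defined recursively by $\mu_{n+1}=\mu_n\otimes id-(\mu_n\otimes id)\circ\tau_{n+1}^{-1}$, and tracking how iterated applications of $\mu$ on the head interact with the permutation signs $\varepsilon_{x''}$ carried by the symmetric factors, and with the redistribution of those factors by $\Delta$, is delicate. The shuffle relation $\mu_{p+q}\circ sh_{p,q}=0$ is the tool that guarantees the Leibniz-compatibility of the head-splitting, while the twist $\tau''_{23}$ must be applied with the $(a-b)$-shifted degrees $x''$ so that the signs coming from crossing two coproduct outputs match on both sides. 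I would carry out the verification following the sign conventions and the term-by-term matching scheme already used for $\delta''$, checking each of the three families in turn and confirming that the residual signs assemble into exactly the factor $(-1)^{a-b}$ and the $+\tau''_{23}$ prescribed by the statement.
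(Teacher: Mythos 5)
Your proposal is correct and follows essentially the same route as the paper's proof: expand both sides on $X_0\otimes X_1\dots X_n$, sort the terms according to whether the head $X_0$ is cut zero, one or two times, dispose of the uncut-head family via the coJacobi identity already established for $\delta''$, and match the remaining two families (double head cut, and mixed head/$X_t$ cut) by direct sign and shuffle bookkeeping. The paper defers exactly those last verifications to \cite{[A]} and \cite{[AAC2]}, so your level of detail and your identification of the controlling identities (Leibniz coidentity for $\delta$ with $\mu_{p+q}\circ sh_{p,q}=0$, coJacobi for $\delta''$, coLeibniz compatibility with $\Delta$) line up with the paper's argument.
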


\begin{proof}

\

D'une part, on a

$$\aligned
&(-1)^{a-b}(id\otimes\kappa)\circ\kappa(X_0\otimes X_1... X_n)=\\
&=(id\otimes\kappa)\Big(\sum_{\begin{smallmatrix}U_0\otimes
V_0=X_0\\ I\cup
J=\{1,\dots,n\}\end{smallmatrix}}(-1)^{(a-b)(u_0''+1)}\times\Big(\varepsilon_{x''}\left(\begin{smallmatrix}u_0v_0x_1\dots x_n\\ u_0~x_I~v_0~x_J\end{smallmatrix}\right)U_0\otimes X_I\bigotimes\mu V_0.X_J+\\
&\hskip 1cm+(-1)^{a-b+1}\varepsilon_{x''}\left(\begin{smallmatrix}
u_0v_0x_1\dots x_n\\ v_0~x_J~u_0~x_I\end{smallmatrix}\right)\mu
V_0\otimes X_J\bigotimes U_0.
X_I\Big)\\&\hskip2cm+(-1)^{(a-b+1)x_0''}X_0\otimes \delta''(X_1\dots X_n)\Big)\\
&=\sum_{\begin{smallmatrix}U_0\otimes V_0=X_0\\ I\cup J=\{1,\dots,n\}\end{smallmatrix}}(-1)^{(a-b)(u_0''+1)}\times\Big
(\varepsilon_{x''}\left(\begin{smallmatrix}u_0v_0x_1\dots x_n\\ u_0~x_I~v_0~x_J\end{smallmatrix}\right)
U_0\otimes X_I\bigotimes\delta''(\mu V_0.X_J)+\\
&\hskip 1cm+(-1)^{a-b+1}\varepsilon_{x''}\Big(\begin{smallmatrix}
u_0v_0x_1\dots x_n\\ v_0~x_J~u_0~x_I\end{smallmatrix}\Big)\mu
V_0\otimes X_J\bigotimes \delta''(U_0. X_I)\Big)\\&\hskip2cm
+(-1)^{(a-b+1)x_0''}X_0\otimes (id\otimes\delta'')\circ\delta''(X_1\dots X_n)\\
&=(1.1)+(1.2),
\endaligned
$$
o\`u $(1,2)$ est le dernier terme, en
$X_0\otimes\delta''\circ(id\otimes\delta'')$.

D'autre part, on a
$$\aligned
&(\kappa\otimes id)\circ\kappa(X_0\otimes X_1\dots X_n)=\\
&=(\kappa\otimes id)\Big(\sum_{\begin{smallmatrix}U_0\otimes
V_0=X_0\\ I\cup
J=\{1,\dots,n\}\end{smallmatrix}}(-1)^{(a-b)u_0''}\times\Big(\varepsilon_{x''}\Big(\begin{smallmatrix}
u_0v_0x_1\dots x_n\\ u_0~x_I~v_0~x_J\end{smallmatrix}\Big)U_0\otimes X_I\bigotimes\mu V_0.X_J\\
&+(-1)^{a-b+1}\varepsilon_{x''}\Big(\begin{smallmatrix}
u_0v_0x_1\dots x_n\\ v_0~x_J~u_0~x_I\end{smallmatrix}\Big)\mu
V_0\otimes X_J\bigotimes U_0.
X_I\Big)+(-1)^{(a-b)x_0''}X_0\otimes \delta''(X_1\dots X_n)\Big)\\
&=\sum_{\begin{smallmatrix}U_0\otimes V_0=X_0\\ I\cup
J=\{1,\dots,n\}\end{smallmatrix}}(-1)^{(a-b)u_0''}\times\Big(\varepsilon_{x''}\Big(\begin{smallmatrix}
u_0v_0x_1\dots x_n\\ u_0~x_I~v_0~x_J\end{smallmatrix}\Big)
\kappa(U_0\otimes X_I)\bigotimes\mu V_0.X_J+\\
&+(-1)^{a-b+1}\varepsilon_{x''}\Big(\begin{smallmatrix} u_0v_0x_1\dots x_n\\
v_0~x_J~u_0~x_I\end{smallmatrix}\Big)\kappa(\mu V_0\otimes
X_J)\bigotimes U_0. X_I\Big)+\\&+(-1)^{(a-b)x_0''}\kappa(X_0)
\otimes \delta''(X_1\dots X_n)+(-1)^{(a-b)x_0''}X_0\otimes (\delta''\otimes id)\circ\delta''(X_1\dots X_n)\\
&=(2.1)+(2.2),
\endaligned
$$
o\`u $(2,2)$ est le dernier terme, en $X_0\otimes (\delta''\otimes
id)\circ\delta''$.

Et on a aussi :
$$\aligned
&\tau_{23}''\circ(\kappa\otimes id)\circ\kappa(X_0\otimes X_1\dots X_n)=\\
&=\tau_{23}''\Big(\sum_{\begin{smallmatrix}U_0\otimes V_0=X_0\\
I\cup
J=\{1,\dots,n\}\end{smallmatrix}}(-1)^{(a-b)u_0''}\times\Big(\varepsilon_{x''}\left(\begin{smallmatrix}
u_0v_0x_1\dots x_n\\ u_0~x_I~v_0~x_J\end{smallmatrix}\right)\kappa(U_0\otimes X_I)\bigotimes\mu V_0.X_J+\\
&\hskip 1cm+(-1)^{a-b+1}\varepsilon_{x''}\left(\begin{smallmatrix}
u_0v_0x_1\dots x_n\\ v_0~x_J~u_0~x_I\end{smallmatrix}
\right)\kappa(\mu V_0\otimes X_J)\bigotimes U_0.
X_I\Big)+\\&+(-1)^{(a-b)x_0''}\kappa(X_0)\otimes\delta''(X_1\dots
X_n)\Big)
+(-1)^{(a-b)x_0''}X_0\otimes \tau_{23}''\circ(\delta''\otimes id)\circ\delta''(X_1\dots X_n)\\
&=(3.1)+(3.2),
\endaligned
$$
o\`u $(3.2)$ est le terme en $X_0\otimes
\tau_{23}''\circ(\delta''\otimes id)\circ\delta''$.

L'identité de coJacobi est vérifiée par $\delta''$ : elle se montre comme pour les $(a,b)$-alg\`ebres (\cite{[A]}),
donc $(1.2)=(2.2)+(3.2)$.\\

Dans tous les termes restants, l'\'el\'ement $X_0$ a \'et\'e
coup\'e au moins une fois. Ces termes correspondent donc au cas où
on coupe deux fois $X_0$ par $\kappa$ et au cas où on coupe une
fois $X_0$ et une fois un des $X_t$ ($t>0$) par $\kappa$. On
vérifie comme dans \cite{[AAC2]} que $(1.1)=(2.1)+(3.1)$.
\end{proof}

En fait, les coproduits de Leibniz $\kappa$ et permutatif $\Delta$
ont des propri\'et\'es de compatibilit\'es, ce qui fait de
$(\mathcal H[a-b]\otimes S(\mathcal H[a-b]),\Delta,\kappa)$ une
bicog\`ebre au sens de Loday (\cite{[L2]}).

\begin{proposition}

\

Les coproduits $\Delta$ et $\kappa$ vérifient les relations de
compatibilité suivantes :
$$\aligned
(1):& ~(id \otimes
\kappa)\circ\Delta=(-1)^{a-b+1}\tau_{23} \circ(id\otimes \kappa)\circ\Delta,\\
(2):& ~(id\otimes \Delta)\circ\kappa=(\kappa\otimes id)\circ\Delta+\tau_{23}\circ(\kappa\otimes id)\circ\Delta,\\
(3):& ~(\Delta\otimes id)\circ\kappa=(id \otimes
\kappa)\circ\Delta+\tau_{23}\circ(\kappa\otimes id)\circ\Delta.
\endaligned
$$
\vskip0.15cm

\end{proposition}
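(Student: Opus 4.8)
The plan is to establish the three relations by evaluating each composite on a generic element $X_0\otimes X_1\dots X_n$ of $\mathcal H[a-b]\otimes S(\mathcal H[a-b])$ and matching the resulting terms. The key structural feature is that both coproducts split according to the r\^ole of the head $X_0$: the permutative coproduct keeps $X_0$ in the head of its first factor,
$$\Delta(X_0\otimes X_1\dots X_n)=X_0\bigotimes X_1\dots X_n+X_0\otimes\Delta'(X_1\dots X_n),$$
while $\kappa=\kappa_{\mathrm{cut}}+\kappa_{\mathrm{sym}}$, where $\kappa_{\mathrm{cut}}$ cuts the head $X_0$ by the underlying Leibniz coproduct $\delta$ (the summands in $U_0\otimes\mu V_0$ and $\mu V_0\otimes U_0$) and $\kappa_{\mathrm{sym}}(X_0\otimes X_1\dots X_n)=(-1)^{(a-b)x_0''}X_0\otimes\delta''(X_1\dots X_n)$ leaves $X_0$ whole. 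Applying a composite therefore yields terms classified by (i) how many times $X_0$ is cut and (ii) how the symmetric factors $X_1,\dots,X_n$ are shuffled, and each relation follows by matching terms of like type.

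First I would dispose of relation $(1)$. Since $\Delta$ places $X_0\otimes X_I$ in the first slot and a \emph{purely symmetric} element $X_J$ in the second, the operator $id\otimes\kappa$ cuts only $X_J$. Thus $(1)$ is exactly the assertion that $\kappa$, applied to a purely symmetric argument, is $(a-b)$-coantisymmetric in its two output slots, i.e. $\tau''\circ\kappa=(-1)^{a-b+1}\kappa$ there; granting this, $\tau_{23}\circ(id\otimes\kappa)\circ\Delta=(-1)^{a-b+1}(id\otimes\kappa)\circ\Delta$, and $(1)$ is immediate. On $\kappa_{\mathrm{cut}}$ the coantisymmetry is visible from the definition, as $\tau''$ interchanges the two summands $U_0\otimes X_I\bigotimes\mu V_0.X_J$ and $\mu V_0\otimes X_J\bigotimes U_0.X_I$ with precisely the sign $(-1)^{a-b+1}$; on $\kappa_{\mathrm{sym}}$ it reduces to the $(a-b)$-coantisymmetry of $\delta''$ already established for the $(a,b)$-algebras.

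For relations $(2)$ and $(3)$ I would apply $\kappa$ first and then distribute $\Delta$ (resp. the composite on the appropriate factor). The terms in which $X_0$ is never cut involve only the symmetric part $S^+(\mathcal H[a-b])$; there the required identities are the coLeibniz relation $(id\otimes\Delta)\circ\delta''=(\delta''\otimes id)\circ\Delta+\tau_{12}''\circ(id\otimes\delta'')\circ\Delta$ and the coJacobi identity of $\delta''$, both proven in the section on $(a,b)$-algebras. The terms in which $X_0$ is cut are controlled by the Leibniz coalgebra structure of $(\mathcal H,\delta)$, namely by $(id\otimes\delta)\circ\delta=(\delta\otimes id-\tau_{23}\circ(\delta\otimes id))\circ\delta$; it is this failure of coassociativity of $\delta$ that generates the extra summand $\tau_{23}\circ(\kappa\otimes id)\circ\Delta$ on the right-hand sides of $(2)$ and $(3)$, while the redistribution of the symmetric factors $X_J$ among the cuts is handled by the commutativity of the shuffle and of the $\mu$-operators.

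The hard part will be the sign bookkeeping in the \emph{mixed} terms: those in which $X_0$ is cut once while a distinct symmetric factor $X_t$ is simultaneously cut, and those in which $X_0$ is cut twice. These genuinely couple the Leibniz coproduct $\delta$ on $\mathcal H$ to the cocommutative structure of $S(\mathcal H[a-b])$, and matching them requires carrying the Koszul signs through the identification $S^{n+1}(\mathcal H[a-b])\hookrightarrow\mathcal H[a-b]\otimes S^n(\mathcal H[a-b])$. As in the proof of the preceding proposition and in \cite{[AAC2]}, I would normalize each type of term using the sign conventions of \cite{[A]}; once this is done the cancellations of the form $(1.1)=(2.1)+(3.1)$ go through and yield relations $(2)$ and $(3)$.
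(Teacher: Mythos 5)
Your global strategy --- evaluate each composite on $X_0\otimes X_1\dots X_n$, split $\Delta$ as $X_0\bigotimes X_1\dots X_n+X_0\otimes\Delta'(X_1\dots X_n)$ and $\kappa$ into its cutting and symmetric parts, then match terms of like type --- is exactly the paper's, and your treatment of relation $(1)$ (reduction to the $(a-b)$-coantisymmetry $\tau''\circ\delta''=(-1)^{a-b+1}\delta''$ applied to the purely symmetric second slot produced by $\Delta$) coincides with the proof given there.

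Your account of relations $(2)$ and $(3)$, however, rests on a misdiagnosis. Each of the six composites occurring in $(2)$ and $(3)$ contains the coproduct $\kappa$ exactly once; consequently $X_0$ is cut at most once, there are no terms in which ``$X_0$ is cut twice'' and no terms in which $X_0$ and a distinct symmetric factor $X_t$ are cut simultaneously, and neither the coJacobi identity of $\delta''$ nor the Leibniz co-identity $(id\otimes\delta)\circ\delta=\big(\delta\otimes id-\tau_{23}\circ(\delta\otimes id)\big)\circ\delta$ ever enters. All of those ingredients belong to the \emph{preceding} proposition, on $(-1)^{a-b}(id\otimes\kappa)\circ\kappa$, not to this one. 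In particular the extra summand $\tau_{23}\circ(\kappa\otimes id)\circ\Delta$ is not generated by the failure of coassociativity of $\delta$: it is generated by the two symmetrized halves of $\kappa$ (the summands $U_0\otimes X_I\bigotimes\mu V_0. X_J$ and $\mu V_0\otimes X_J\bigotimes U_0. X_I$) combined with the two possible positions of the block $X_K$ split off by $\Delta$, while the terms in which $X_0$ is never cut are absorbed by the coLeibniz identity between $\Delta'$ and $\delta''$. Concretely, the paper expands $(id\otimes\Delta)\circ\kappa$ into five families $(1.1),\dots,(1.5)$ and checks $(1.1)=(2.1)$, $(1.2)=(2.2)$, $(1.3)=(3.1)$, $(1.4)=(3.2)$ and $(1.5)=(2.3)+(3.3)$; your plan, as written, would send you looking for double-cut cancellations that do not occur and leaves the actual origin of the $\tau_{23}$-term unexplained. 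The rest of the sketch would survive the correction, since the term-by-term expansion would force the right matching, but the stated mechanism should be repaired before the computation is carried out.
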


\begin{proof}

\

\noindent $(1)$ On rappelle que :
$$
\Delta (X_0\otimes X_1\dots X_n)=X_0\bigotimes X_1\dots
X_n+X_0\otimes\Delta'(X_1\dots X_n).
$$
où $\Delta'(X_1\dots X_n)$ est le coproduit défini sur la cogèbre
cocommutative $S^{+}(\mathcal H[a-b])$. On a donc :
\begin{align*}
(id \otimes \kappa)\circ\Delta (X_0\otimes X_1\dots
X_n)=\Big(X_0\bigotimes\delta''+X_0\otimes\big(id \otimes
\delta''\big)\circ\Delta'\Big)(X_1\dots X_n)\end{align*} D'autre
part,
$$\aligned
\tau_{23}''\circ(id \otimes\kappa)\circ\Delta& (X_0\otimes X_1\dots X_n)=\\
&=\Big(X_0\bigotimes \tau_{23}''\circ\delta''+X_0\otimes\big(id
\otimes\tau_{23}''\circ\delta''\big)\circ\Delta'\Big)(X_1\dots
X_n)
\endaligned
$$
Comme $\tau_{23}''\circ\delta''=(-1)^{a-b+1}\delta''$. Ainsi, ~$
\tau_{23}'' \circ(id\otimes\kappa)\circ\Delta=(-1)^{a-b+1}(id
\otimes\kappa)\circ\Delta. $

\

\noindent $(2)$ D'une part, on a :
$$\aligned
&(id\otimes \Delta)\circ\kappa(X_0\otimes X_1\dots X_n)=\\
&=(id\otimes \Delta)\Big(\sum_{\begin{smallmatrix}U_0\otimes V_0=X_0 \\ I\cup J=\{1,\dots,n\}\end{smallmatrix}}
 (-1)^{(a-b)u_0''}\Big(\varepsilon_{x''}\left(\begin{smallmatrix}u_0~v_0~x_1\dots x_n\\ u_0~x_I~v_0~x_J\end{smallmatrix}
 \right)
 U_0\otimes X_I\bigotimes \mu V_0.X_J+\\
&+(-1)^{a-b+1}\varepsilon_{x''}\Big(\begin{smallmatrix}u_0~v_0~x_1\dots x_n\\
v_0~x_J~u_0~x_I\end{smallmatrix}\Big)\mu V_0\otimes X_J\bigotimes
U_0 .
X_I\Big)+(-1)^{(a-b)x_0''}X_0\otimes\delta''( X_1\dots X_n)\Big)\\
&=\hskip-0.7cm\sum_{\begin{smallmatrix}U_0\otimes V_0=X_0 \\ I\cup J\cup K=\{1,\dots,n\};K\neq\emptyset
\end{smallmatrix}} \hskip -0.7cm(-1)^{(a-b)u_0''}\Big\{\varepsilon_{x''}\Big(\begin{smallmatrix}u_0~v_0~x_1\dots x_n\\
 u_0~x_I~v_0~x_{J\cup K}\end{smallmatrix}\Big) \Big(\varepsilon_{x''}\left(\begin{smallmatrix}v_0~x_{J\cup K}\\
 v_0~x_J~x_ K\end{smallmatrix}\right) U_0\otimes X_I\bigotimes\mu V_0.X_J\bigotimes X_K\\
&\hskip1cm+\varepsilon_{x''}\Big(\begin{smallmatrix}v_0~x_{J\cup
K}\\ x_K~v_0~x_J\end{smallmatrix}\Big)
U_0\otimes X_I\bigotimes X_K\bigotimes \mu V_0.X_J\Big)+\\
&\hskip 1cm+(-1)^{a-b+1}\varepsilon_{x''}\Big(\begin{smallmatrix}
u_0~v_0~x_1\dots x_n\\ v_0~x_J~u_0~x_{I\cup
K}\end{smallmatrix}\Big)\Big(\varepsilon_{x''}\left(\begin{smallmatrix}
u_0~x_{I\cup K}\\ u_0~x_I~x_ K\end{smallmatrix}\right)\mu V_0\otimes X_J\bigotimes U_0.X_I\bigotimes X_K+\\
&\hskip 1cm+\varepsilon_{x''}\Big(\begin{smallmatrix}
u_0~x_{I\cup K}\\ x_K~u_0~x_I\end{smallmatrix}\Big)\mu V_0\otimes X_J\bigotimes X_K\bigotimes U_0.X_I\Big)\Big\}+\\
&\hskip 1cm+(-1)^{(a-b)x_0''}X_0\otimes(id\otimes \Delta')\circ\delta''( X_1\dots X_n)\\
&=(1.1)+(1.2)+(1.3)+(1.4)+(1.5).
\endaligned
$$

D'autre part, on a :
$$
\aligned
&(\kappa\otimes id)\circ\Delta(X_0\otimes X_1\dots X_n)=(\kappa\otimes id)\Big(X_0\bigotimes X_1\dots X_n+X_0\otimes\Delta'(X_1\dots X_n)\Big)\\
&=\sum_{U_0\otimes V_0=X_0
}(-1)^{(a-b)u_0''}\times\\&\times\Big(\Big(U_0\bigotimes\mu
V_0\bigotimes X_1 \dots X_n
+(-1)^{a-b+1}\varepsilon_{x''}\Big(\begin{smallmatrix} u_0~v_0\\
v_0~u_0\end{smallmatrix}\Big)\mu V_0\bigotimes U_0\bigotimes X_1\dots X_n\Big)\\
&\hskip 3cm+\sum_{\begin{smallmatrix} I\cup J\cup K =\{1,\dots,n\}\\ I\cup K\neq\emptyset,
J\neq\emptyset\end{smallmatrix}}\varepsilon_{x''}\left(\begin{smallmatrix} u_0~v_0~x_{I\cup J}~x_K\\
u_0~x_I~v_0~x_J~x_K\end{smallmatrix}\right) U_0\otimes X_I\bigotimes\mu V_0.X_J\bigotimes X_K+\\
&\hskip 3cm+(-1)^{a-b+1}\varepsilon_{x''}\Big(\begin{smallmatrix}
u_0~v_0~x_{I\cup J}~x_K\\
v_0~x_J~u_0~x_I~x_K\end{smallmatrix}\Big)
\mu V_0\otimes X_J\bigotimes U_0.X_I\bigotimes X_K\Big)+\\
&\hskip 3cm+(-1)^{(a-b)x_0''}X_0\otimes(\delta''\otimes
id)\circ\Delta'(X_1\dots X_n)\\&=\sum_{U_0\otimes
V_0=X_0}(-1)^{(a-b)u_0''}\sum_{\begin{smallmatrix} I\cup J\cup K
=\{1,\dots,n\}\\ K\neq\emptyset\end{smallmatrix}}
\varepsilon_{x''}\left(\begin{smallmatrix} u_0~v_0~x_1 \dots x_n\\
u_0~x_I~v_0~x_J~x_K\end{smallmatrix}\right)U_0\otimes
X_I\bigotimes
\mu V_0.X_J\bigotimes X_K+\\
&\hskip 3cm+(-1)^{a-b+1}\varepsilon_{x''}\left(\begin{smallmatrix}
u_0~v_0~x_1 \dots~x_n\\
v_0~x_J~u_0~x_I~x_K\end{smallmatrix}\right)\mu V_0\otimes
X_J\bigotimes
U_0.X_I\bigotimes X_K+\\
&\hskip 3cm+(-1)^{(a-b)x_0''}X_0\otimes(\delta''\otimes
id)\circ\Delta'(X_1\dots X_n)\\& =(2.1)+(2.2)+(2.3). \endaligned$$

De m\^eme,
$$
\aligned
&\tau_{23}''\circ(\kappa\otimes id)\circ\Delta(X_0\otimes X_1\dots X_n)=\\
&=\hskip-0.5cm\sum_{U_0\otimes V_0=X_0}(-1)^{(a-b)u_0''}\times\\&\times\sum_{\begin{smallmatrix} I\cup J\cup K =\{1,\dots,n\}\\
K\neq\emptyset\end{smallmatrix}}\hskip-0.5cm\varepsilon_{x''}\left(\begin{smallmatrix} u_0~v_0~x_1 \dots x_n\\
u_0~x_I~v_0~x_J~x_K\end{smallmatrix}\right)\varepsilon_{x''}\left(\begin{smallmatrix} v_0~x_J~x_K\\
x_K~v_0~x_J\end{smallmatrix}\right)U_0\otimes X_I\bigotimes X_K\bigotimes\mu V_0.X_J+\\
&\hskip 1cm+(-1)^{a-b+1}\varepsilon_{x''}\Big(\begin{smallmatrix} u_0~v_0~x_1 \dots~x_n\\
v_0~x_J~u_0~x_I~x_K\end{smallmatrix}\Big)\varepsilon_{x''}\left(\begin{smallmatrix} u_0~x_I~x_K\\
x_K~u_0~x_I\end{smallmatrix}\right)\mu V_0\otimes X_J\bigotimes X_K\bigotimes U_0.X_I+\\
&\hskip 1cm+(-1)^{(a-b)x_0''}X_0\otimes\tau_{23}''\circ(\delta''\otimes id)\circ\Delta'(X_1\dots X_n)\\
&=(3.1)+(3.2)+(3.3).
\endaligned
$$

On vérifie que $(1.5)=(2.3)+(3.3)$, gr\^ace \`a l'identit\'e de coLeibniz entre $\Delta'$ et $\delta''$,
\'etablie comme dans \cite{[A]}. De m\^eme, $(1.1)=(2.1)$, $(1.3)=(3.1)$, $(1.2)=(2.2)$ et $(1.4)=(3.2)$.\\

\noindent $(3)$ Cette relation se démontre comme la relation
précédente $(2)$.
\end{proof}

On a ainsi muni l'espace $\mathcal H[a-b]\otimes S(\mathcal H[a-b])$ d'une structure de bicog\`ebre permutative
et de Leibniz. On note cette bicog\`ebre $\Big(\mathcal H[a-b]\otimes S(\mathcal H[a-b]),\Delta,\kappa\Big)$.\\


\section{Pr\'e-$(a,b)$-alg\`ebre \`a homotopie pr\`es}

\

On va montrer que les cod\'erivations $m$ et $R$ de $\Delta$,
obtenues \`a partir des lois de $\mathcal A$, sont aussi des
cod\'erivations du coproduit $\kappa$. Par cons\'equent $m+R$ est
une codiff\'erentielle \`a la fois pour $\Delta$ et pour $\kappa$.
On posera donc :

\begin{definition}

\

Une pré-$(a,b)$-algèbre à homotopie près ou pré-$(a,b)_\infty$
algèbre est, pour le m\^eme opérateur $Q$, une cogèbre permutative
codiff\'erentielle $(\mathcal{C},\Delta,Q)$ et une cogèbre de
Leibniz codiff\'erentielle $(\mathcal{C},\kappa,Q)$ telle que les
deux coproduits $\Delta$ et $\kappa$ satisfassent les relations de
compatibilités suivantes :

$$\aligned
(id \otimes\kappa)\circ\Delta&=(-1)^{a-b+1}\tau_{23}'' \circ(id\otimes \kappa)\circ\Delta,\\
(id\otimes \Delta)\circ\kappa&=(\kappa\otimes id)\circ\Delta+\tau_{23}''\circ(\kappa\otimes id)\circ\Delta,\\
(\Delta\otimes id)\circ\kappa&=(id \otimes
\kappa)\circ\Delta+\tau_{23}''\circ(\kappa\otimes id)\circ\Delta.
\endaligned
$$
\end{definition}

\begin{proposition}

\

Soit $\mathcal A$ une pré-$(a,b)$-alg\`ebre. Sur la bicog\`ebre
$(\mathcal H[a-b]\otimes S(\mathcal H[a-b]),\Delta,\kappa)$,
 l'op\'erateur de degré $1$ et de carr\'e nul $Q=m+R$ est une cod\'erivation du coproduit
 $\kappa$,
 c'est à dire : $(Q\otimes id+ id\otimes
Q)\circ\kappa=(-1)^{a-b}\kappa\circ Q$.\\
\end{proposition}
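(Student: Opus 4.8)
Le plan est de décomposer $Q=m+R$ et de vérifier séparément que $m$ et $R$ sont des codérivations de $\kappa$ de degré $1$, c'est-à-dire que chacun des deux opérateurs $\bullet\in\{m,R\}$ satisfait l'identité $(\bullet\otimes id+id\otimes\bullet)\circ\kappa=(-1)^{a-b}\kappa\circ\bullet$; par linéarité, et puisque $m$ et $R$ ont tous deux le degré $1$, on en déduira la relation cherchée pour $Q$. L'observation centrale est que les trois opérateurs $m$, $R$ et $\kappa$ se scindent de la même façon sur $\mathcal H[a-b]\otimes S(\mathcal H[a-b])$ : une partie agissant sur le facteur distingué $X_0\in\mathcal H[a-b]$ (respectivement par $D$, par le produit préLie $R_2''$, ou par le cocrochet de Leibniz $\kappa$ de $\mathcal H[a-b]$ lui-même, donné par $\delta$ et les opérateurs $\mu_j$) et une partie agissant sur le facteur symétrique $S(\mathcal H[a-b])$ (respectivement par $m''$, par $\ell''$, ou par $\delta''$). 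On regroupera les termes selon la manière dont ces coupures se combinent.

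Pour $m$, en reportant les expressions de $m$ et de $\kappa$, on voit apparaître deux familles de termes : ceux où $D$ agit sur une tranche issue de la coupure $X_0=U_0\otimes V_0$ (ou sur $X_0$ laissé entier), et ceux où $m''$ agit sur le facteur symétrique. La première famille se referme grâce au fait, déjà établi, que $(\mathcal H,\delta,D)$ est une cogèbre de Leibniz codifférentielle, donc que $D$ est une codérivation de $\delta$; cette compatibilité repose elle-même sur le Lemme $\mu_{p+q}\circ sh_{p,q}=0$ et sur la commutation signée de $D$ avec les $\mu_j$. La seconde famille se referme grâce à la Proposition de la section sur les $(a,b)$-algèbres, à savoir $(m\otimes id+id\otimes m)\circ\delta''=(-1)^{a-b}\delta''\circ m$. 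Les signes de Koszul dictés par $|D|=1$, $|m''|=1$ et $|\kappa|=a-b$ produisent alors exactement le facteur $(-1)^{a-b}$ voulu.

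Pour $R$, le même découpage mène à deux vérifications. La partie symétrique s'obtient en invoquant la Proposition des $(a,b)$-algèbres qui donne que $\ell''$ est une codérivation de $\delta''$, soit $(\ell''\otimes id+id\otimes\ell'')\circ\delta''=(-1)^{a-b}\delta''\circ\ell''$. La partie portant sur $X_0$ demande de confronter le produit préLie $R_2''$, défini à partir de $\ell$ et $\diamond$ sur $\mathcal H$, au découpage de Leibniz contenu dans $\kappa$, c'est-à-dire à $\delta$ et aux termes $\mu V_0$. Interviennent ici de façon essentielle les relations de la préLie différentielle sur $\mathcal H$, notamment $\alpha\wedge\ell(\beta,\gamma)=0$ et $\ell(\alpha,\beta\wedge\gamma)=(-1)^{\alpha+b-a+1}\ell(\alpha,\beta)\wedge\gamma$, ainsi que la commutativité du produit shuffle; elles assurent l'annulation ou l'appariement des termes croisés.

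Le point délicat sera précisément cette confrontation entre $R_2''$ et $\kappa$ : il faut apparier les termes où la paire préLie $R_2''(X_0,X_i)$ est produite simultanément à la coupure $X_0=U_0\otimes V_0$ et à l'application de $\mu$ à $V_0$, tout en contrôlant les signatures de shuffle et les décalages de degré $a-b$. Comme dans la preuve du Théorème précédent, les choix de signes reprendront ceux de \cite{[A]} pour les $(a,b)$-algèbres et le schéma de prolongement celui des algèbres préGerstenhaber de \cite{[AAC2]}; la vérification se conduira en distinguant, comme là-bas, plusieurs types de termes selon la position relative des indices coupés, les simplifications provenant à chaque fois des relations de compatibilité de $\mathcal A$. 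Une fois $m$ et $R$ ainsi traités, on conclura que $Q=m+R$ est une codérivation de $\kappa$, et donc que $(\mathcal H[a-b]\otimes S(\mathcal H[a-b]),\kappa,Q)$ est une cogèbre de Leibniz codifférentielle.
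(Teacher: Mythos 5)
Votre plan est correct et suit essentiellement la même démarche que l'article : on scinde $Q=m+R$, on établit que $m$ est une codérivation de $\kappa$ en appariant les termes issus des coupures de $X_0$ (via la structure de cogèbre de Leibniz codifférentielle de $(\mathcal H,\delta,D)$ et la relation $\mu D(V_0)=D(\mu V_0)$) avec ceux portés par le facteur symétrique (via la codérivation de $\delta''$ par $m'$ établie pour les $(a,b)$-algèbres), puis on traite $R$ par le même schéma en renvoyant aux signes de \cite{[A]} et aux calculs de \cite{[AAC2]}. Le niveau de détail est comparable à celui du texte, qui n'explicite lui-même que le cas de $m$ (par un développement en dix termes de chaque membre) et défère le cas de $R$ à \cite{[AAC2]}.
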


\begin{proof}

\

\noindent 1. Montrons d'abord que $m$ est une cod\'erivation de
$\kappa$ : $(m\otimes id+ id\otimes
m)\circ\kappa=(-1)^{a-b}\kappa\circ m$.

On rappelle que :
$$\aligned
\kappa(X_0\otimes X_1\dots X_n)&=\sum_{\begin{smallmatrix}U_0\otimes V_0=X_0\\
I\cup
J=\{1,\dots,n\}\end{smallmatrix}}(-1)^{(a-b)u_0''}\times\Big(\varepsilon_{x'}\left(\begin{smallmatrix}
u_0v_0x_1\dots x_n\\ u_0~x_I~v_0~x_J\end{smallmatrix}\right)U_0\otimes X_I\bigotimes\mu V_0.X_J+\\
&\hskip
-2.5cm+(-1)^{a-b+1}\varepsilon_{x''}\left(\begin{smallmatrix}
u_0v_0x_1\dots x_n\\ v_0~x_J~u_0~x_I\end{smallmatrix}\right)\mu
V_0\otimes X_J\bigotimes U_0.
X_I\Big)+(-1)^{(a-b)x_0''}X_0\otimes\delta''(X_1\dots X_n)
\endaligned
$$
et
$$
m(X_0\otimes X_1\dots X_n)=D(X_0)\otimes X_1\dots
X_n+(-1)^{x_0''}X_0\otimes m'(X_1\dots X_n),
$$
o\`u on a not\'e $m'$ la codiff\'erentielle $m$ de $\Delta'$ et
$\delta''$ dans $S^+(\mathcal H[a-b])$.

En développant, on trouve alors 10 termes dans $(m\otimes id+ id\otimes m)\circ\kappa$ et
dans $(-1)^{a-b}\kappa\circ m$.\\

- D'une part $(m\otimes id+ id\otimes m)\circ\kappa(X_0\otimes
X_1\dots X_n)$ s'\'ecrit
$$\aligned
&\hskip-0.5cm\sum_{\begin{smallmatrix}U_0\otimes V_0=X_0\\ I\cup
J=\{1,\dots,n\}\end{smallmatrix}}\hskip-0.5cm(-1)^{(a-b)u_0''}\varepsilon_{x''}\left(\begin{smallmatrix}
u_0v_0x_1\dots x_n\\
u_0~x_I~v_0~x_J\end{smallmatrix}\right)\times\\&\hskip2cm\times\Big(D(U_0)\otimes
X_I\bigotimes\mu V_0.X_J+
(-1)^{u_0''}U_0\otimes m'(X_I)\bigotimes \mu V_0.X_J\Big)+\\
&\hskip
1cm+(-1)^{(a-b)u_0''}(-1)^{a-b+1}\varepsilon_{x''}\left(\begin{smallmatrix}u_0v_0x_1\dots
x_n\\
v_0~x_J~u_0~x_I\end{smallmatrix}\right)\times\\&\hskip2cm\times\Big(
D(V_0)\otimes X_J\bigotimes U_0.X_I+(-1)^{v_0''}\mu V_0\otimes m'(X_J)\bigotimes U_0.X_I\Big)+\\
&\hskip
1cm+(-1)^{(a-b)u_0''+x_I''+u_0''}\varepsilon_{x''}\left(\begin{smallmatrix}u_0v_0x_1\dots
x_n\\ u_0~x_I~v_0~x_J\end{smallmatrix}\right)
U_0\otimes X_I\bigotimes D(\mu V_0).X_J+\\
&\hskip
1cm+(-1)^{(a-b)u_0''+x_I''+u_0''+v''_0}\varepsilon_{x''}\left(\begin{smallmatrix}u_0v_0x_1\dots
x_n\\ u_0~x_I~v_0~x_J\end{smallmatrix}\right)
U_0\otimes X_I\bigotimes\mu V_0.m'(X_J)+\\
&\hskip
1cm+(-1)^{(a-b)u_0''+x_J''+v_0''}(-1)^{a-b+1}\varepsilon_{x''}\left(\begin{smallmatrix}u_0v_0x_1\dots
x_n\\ v_0~x_J~u_0~x_I\end{smallmatrix}\right)
\mu V_0\otimes X_J\bigotimes D(U_0).X_I+\\
&\hskip
1cm+(-1)^{(a-b)u_0''+x_J''+v_0''+u''_0}(-1)^{a-b+1}\varepsilon_{x''}\left(\begin{smallmatrix}u_0v_0x_1\dots
x_n\\ v_0~x_J~u_0~x_I\end{smallmatrix}\right)
\mu V_0\otimes X_J\bigotimes U_0.m'(X_I)+\\
&\hskip 1cm+(-1)^{(a-b)x_0''}D(X_0)\otimes \delta''(X_1\dots X_n)+\\
&\hskip 1cm+(-1)^{(a-b)x_0''}X_0\bigotimes(m'\otimes id+ id\otimes m')\circ\delta''(X_1\dots X_n)\\
&=(1.1)+\dots+(1.10).
\endaligned
$$

- De m\^eme, puisque $\mu D(V_0)=D(\mu V_0)$, et si
$X_0=U_0\otimes V_0$,
$$
D(X_0)=D(U_0)\otimes V_0+(-1)^{u_0''}U_0\otimes D(V_0)=U'_0\otimes
V_0+(-1)^{u''_0}U_0\otimes V'_0,
$$
et comme
$m'(X_I.X_J)=m'(X_I).X_J+(-1)^{x''_I}X_I.m'(X_J)=X_I'.X_J\pm
X_I.X'_J$, alors on d\'eveloppe $(-1)^{a-b}\kappa\circ
m(X_0\otimes X_1\dots X_n)$ en :

$$\aligned
&(-1)^{a-b}\sum_{\begin{smallmatrix}U_0\otimes V_0=X_0\\ I\cup
J=\{1,\dots,n\}\end{smallmatrix}}\Big((-1)^{(a-b)(u_0')''}\varepsilon_{x''}\left(\begin{smallmatrix}
u_0'v_0x_1\dots x_n\\ u_0'~x_I~v_0~x_J\end{smallmatrix}\right)U_0'\otimes X_I\bigotimes\mu V_0.X_J+\\
&\hskip
1cm+(-1)^{(a-b)(u'_0)''}(-1)^{a-b+1}\varepsilon_{x''}\left(\begin{smallmatrix}u_0'v_0x_1\dots
x_n\\ v_0~x_J~u_0'~x_I\end{smallmatrix}\right)
\mu V_0\otimes X_J\bigotimes U_0'.X_I+\\
&\hskip 1cm+(-1)^{(a-b)u_0''+u_0''}\varepsilon_{x''}\left(\begin{smallmatrix}u_0v_0'x_1\dots x_n\\ u_0~x_I~v_0'~x_J\end{smallmatrix}\right)U_0\otimes X_I\bigotimes\mu V_0'.X_J+\\
&\hskip
1cm+(-1)^{(a-b)u_0''+u''_0}(-1)^{a-b+1}\varepsilon_{x''}\left(\begin{smallmatrix}u_0v_0'x_1\dots
x_n\\ v_0'~x_J~u_0~x_I\end{smallmatrix}\right)
\mu V_0'\otimes X_J\bigotimes U_0.X_I+\\
&\hskip
1cm+(-1)^{(a-b)u_0''+x''_0}\varepsilon_{x''}\Big(\begin{smallmatrix}u_0v_0x_1\dots
x_n\\ u_0~x_I'~v_0~x_J\end{smallmatrix}\Big)
U_0\otimes m'(X_I)\bigotimes\mu V_0.X_J+\\
&\hskip 1cm+(-1)^{(a-b)u''_0+x_0''}(-1)^{a-b+1}\varepsilon_{x''}\left(\begin{smallmatrix}u_0v_0x_1\dots x_n\\ v_0~x_J~u_0~x_I'\end{smallmatrix}\right)\mu V_0\otimes X_J\bigotimes U_0.m'(X_I)+\\
&\hskip
1cm+(-1)^{(a-b)u_0''+x_I''+x_0''}\varepsilon_{x''}\left(\begin{smallmatrix}
u_0v_0x_1\dots x_n\\ u_0~x_I~v_0~x_J'\end{smallmatrix}\right)U_0\otimes X_I\bigotimes \mu V_0.m'(X_J)+\\
&\hskip
1cm+(-1)^{(a-b)u_0''+x_I''+x_0''}(-1)^{a-b+1}\varepsilon_{x''}\left(\begin{smallmatrix}u_0v_0x_1\dots
x_n\\ v_0~x_J'~u_0~x_I\end{smallmatrix}\right)
\mu V_0\otimes m'(X_J)\bigotimes U_0.X_I\Big)\\
&\hskip 1cm+(-1)^{a-b}(-1)^{(a-b)(x_0''+1)}D(X_0)\otimes \delta''(X_1\dots X_n)\\
&\hskip 1cm+(-1)^{a-b}(-1)^{(a-b+1)x_0''}X_0\bigotimes\delta''\circ m'(X_1\dots X_n)\\
&=(2.1)+\dots+(2.10).
\endaligned
$$

On a imm\'ediatement $(1.10)=(2.10)$ car on sait que $m'$ est une
cod\'erivation de $\delta''$ (comme dans \cite{[A]}). On a aussi
imm\'ediatement $(1.9)=(2.9)$. Les autres termes se simplifient
deux \`a deux suivant :
$$\begin{array}{llll}
(1.1)=(2.1),~~&(1.2)=(2.5),~~&(1.3)=(2.4),~~&(1.4)=(2.8),\\
(1.5)=(2.3),&(1.6)=(2.7),&(1.7)=(2.2),&(1.8)=(2.6).
\end{array}
$$

\noindent 2. La relation $(R\otimes id+ id\otimes
R)\circ\kappa=(-1)^{a-b}\kappa\circ R$ se démontre comme dans
\cite{[AAC2]}.
\end{proof}

\begin{thm}

\

Soit $(\mathcal A,\curlywedge,\lozenge)$ une
pré-$(a,b)$-alg\`ebre, notons $\mathcal H=T^+(\mathcal A[-a+1])$,
$\Delta$, $\kappa$ les coproduits et $Q=m+R$, la cod\'erivation
d\'efinis ci-dessus sur $\mathcal H[a-b]\otimes S^+(\mathcal
H[a-b])$, alors
$$
\Big(\mathcal{H}[a-b]\otimes
S(\mathcal{H}[a-b]),\Delta,\kappa,Q\Big)
$$ est une $pre$-$(a,b)_\infty$ alg\`ebre, appel\'ee la $pre$-$(a,b)$-alg\`ebre à homotopie près enveloppante de
 $(\mathcal A,\curlywedge,\lozenge)$.\\
\end{thm}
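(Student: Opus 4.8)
Le plan est de montrer que les trois conditions de la d\'efinition d'une pr\'e-$(a,b)_\infty$ alg\`ebre sont d\'ej\`a toutes \'etablies dans les sections pr\'ec\'edentes, de sorte que le th\'eor\`eme n'en est que la synth\`ese. On posera $\mathcal{C}=\mathcal{H}[a-b]\otimes S(\mathcal{H}[a-b])$ et $Q=m+R$.

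D'abord, on v\'erifiera la structure permutative codiff\'erentielle $(\mathcal C,\Delta,Q)$. Puisque le th\'eor\`eme pr\'ec\'edent affirme que $(\mathcal H[a-b-1],R_2',D)$ est une alg\`ebre pr\'e-Lie diff\'erentielle gradu\'ee, on lui applique la construction de l'alg\`ebre enveloppante $preL_\infty$ (la proposition de \cite{[ChL]}). Celle-ci fournit directement que $(\mathcal C,\Delta,Q)$ est une cog\`ebre permutative codiff\'erentielle, donc que $Q$ est une cod\'erivation de $\Delta$, de degr\'e $1$, v\'erifiant $Q^2=0$. La premi\`ere condition est ainsi acquise, de m\^eme que l'\'egalit\'e $Q^2=0$, dont on se servira ensuite.

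Ensuite, pour la structure de Leibniz codiff\'erentielle $(\mathcal C,\kappa,Q)$, on assemblera deux r\'esultats. D'une part, la proposition \'etablissant l'identit\'e de coJacobi pour $\kappa$,
$$(-1)^{a-b}(id\otimes\kappa)\circ\kappa=\big(\kappa\otimes id+\tau''_{23}\circ(\kappa\otimes id)\big)\circ\kappa,$$
montre que $(\mathcal C,\kappa)$ est une cog\`ebre de Leibniz. D'autre part, la proposition pr\'ec\'edente montre que le m\^eme op\'erateur $Q=m+R$ est une cod\'erivation de $\kappa$, c'est-\`a-dire $(Q\otimes id+id\otimes Q)\circ\kappa=(-1)^{a-b}\kappa\circ Q$. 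Comme $Q^2=0$ est d\'ej\`a connu par l'\'etape pr\'ec\'edente, on en d\'eduit que $(\mathcal C,\kappa,Q)$ est une cog\`ebre de Leibniz codiff\'erentielle. Le point crucial, qui fait tout l'int\'er\^et du th\'eor\`eme, est qu'il s'agit du m\^eme $Q$ des deux c\^ot\'es : c'est l\`a que r\'eside la v\'eritable difficult\'e, d\'ej\`a surmont\'ee en d\'ecomposant $Q=m+R$ et en traitant s\'epar\'ement $m$ (par appariement terme \`a terme des dix termes) et $R$ (comme dans \cite{[AAC2]}).

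Enfin, les trois relations de compatibilit\'e requises entre $\Delta$ et $\kappa$ sont exactement celles de la proposition de compatibilit\'e \'etablie plus haut. Les trois conditions de la d\'efinition \'etant ainsi r\'eunies pour le m\^eme op\'erateur $Q$, l'espace $\Big(\mathcal H[a-b]\otimes S(\mathcal H[a-b]),\Delta,\kappa,Q\Big)$ est bien une pr\'e-$(a,b)_\infty$ alg\`ebre, ce qui \'etablit l'\'enonc\'e.
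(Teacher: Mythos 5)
Votre démonstration est correcte et suit exactement la démarche du papier : le théorème y est énoncé sans preuve séparée, car il n'est que la synthèse des propositions précédentes (structure permutative codifférentielle issue de l'algèbre pré-Lie enveloppante, identité de coJacobi pour $\kappa$, $Q$ codérivation de $\kappa$, et relations de compatibilité entre $\Delta$ et $\kappa$). Vous assemblez ces résultats dans le même ordre et avec les mêmes justifications que celles implicitement visées par l'auteur.
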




\begin{thebibliography}{12}

\bibitem[A] {[A]} W. Aloulou, {\sl Les $(a,b)$-alg\`ebres \`a homotopie pr\`es}, Ann. Math. Blaise Pascal, vol 17, no 1
(2010), 97-151 .\\


\bibitem[AAC] {[AAC]} W. Aloulou, D. Arnal, R. Chatbouri, {\sl Alg\`ebres et cog\`ebres de Gerstenhaber et cohomologies de Chevalley-Harrison},
 Bull. Sci. Math., vol 133, (2009) 1-50.\\

\bibitem[AAC1] {[AAC1]} W. Aloulou, D. Arnal, R. Chatbouri, {\sl Cohomologie de
Chevalley des graphes vectoriels}, Pacific J of Math, vol 229, no
2, (2007) 257-292.\\

\bibitem[AAC2] {[AAC2]} W. Aloulou, D. Arnal, R. Chatbouri, {\sl Algèbre pré-Gerstenhaber à homotopie près},
preprint arXiv: 1206.4335v1 [math.QA] 19 Jun 2012.\\

\bibitem[Ag] {[Ag]} M. Aguiar, {\sl Pre-Poisson algebras}, Lett. Math. Phys. Vol 54 (2000), 263-277.\\


\bibitem[AMM] {[AMM]} D. Arnal, D. Manchon, M. Masmoudi, {\sl Choix des signes pour la formalit\'e de M. Kontsevich}, Pacific J of Math, vol 203, no 1 (2002),
23-66.\\

\bibitem[BGHHW] {[BGHHW]} M. Bordemann, G. Ginot, G. Halbout, H.C. Herbig, S. Waldmann, {\sl Formalit\'e $G_{\infty}$ adapt\'ee et star-repr\'esentations sur des sous vari\'et\'es co\"{\i}sotropes}, math.QA/0504276 v 1 13 Apr 2005.\\

\bibitem[ChL] {[ChL]} F. Chapoton, M. Livernet, {\sl
Pre-Lie algebras and the rooted trees operad}, Int. Math. Res.
Not., vol 8, (2001),
395-408.\\

\bibitem[C] {[C]}  R. Chatbouri, {\sl
Algèbres enveloppantes à homotopie près, homologies et
cohomologies}, Ann. Math. Fac. Sci. Toulouse vol XX, série 6,
Fascicule 1 (2011),
99-133.\\

\bibitem[F] {[F]} B. Fresse, {\sl
Th\'eorie des op\'erades de Koszul et homologie des alg\`ebres de
Poisson}, Ann. Math. Blaise Pascal, vol 13, no 2 (2006),
237-312.\\

\bibitem[G] {[G]} G. Ginot, {\sl Homologie et mod\`ele minimal des alg\`ebres de Gerstenhaber}, Ann. Math. Blaise Pascal, vol 11, no 1 (2004),
95-126.\\

\bibitem[GK] {[GK]}  V. Ginzburg, M. Kapranov{\sl
Kozul duality for operads}, Duke Math. J. vol 76, (1994),
203-272.\\


\bibitem[L1] {[L1]} J.L. Loday, {\sl La renaissance des opérades}, Astérisque, vol
237,(1996), Séminaire Bourbaki 1994/1995, expo. no. 792.\\

\bibitem[L2] {[L2]} J.L. Loday, {\sl Dialgebras}, Prébub. Inst. Rech. Math. Av. 14 (1999).\\

\bibitem[Liv] {[Liv]}  M. Livernet, {\sl
Rational homotopy of Leibniz algebras}, Manuscripta Math. vol 96,
(1998),
295-315.\\

\bibitem[Q] {[Q]}  D. Quillen, {\sl
Rational homotopy theory}, Ann. of Math. vol 90, (1969),
205-295.\\



\end{thebibliography}
\end{document}